\documentclass[10pt,english,reqno]{amsart}
\usepackage{amsmath,amssymb,amsthm,amsfonts,graphicx,color}
\usepackage{amssymb,geometry}

\usepackage{tikz}

\usepackage[T1]{fontenc}
\usepackage[latin1]{inputenc}
\usepackage[english]{babel}
\usepackage{geometry}

\usepackage{cases}

\usepackage[colorlinks=true]{hyperref}

\usepackage[colorlinks=true]{hyperref}
\hypersetup{linkcolor=black,citecolor=black,filecolor=black,urlcolor=black} 

\newcommand{\red}[1]{{\color{red} #1}}

\newcommand{\x}{{\tt x}}
\newcommand{\y}{{\tt y}}

\newcommand{\N}{\mathbb{N}}

\newcommand{\Q}{\mathbb{Q}}
\newcommand{\R}{\mathbb{R}}

\newcommand{\eps}{\varepsilon}

 \newtheorem{lema}{Lemma}[section]
 \newtheorem{lemma}{Lemma}[section]
\newtheorem{definition}{Definition}[section]
\newtheorem{theorem}{Theorem}[section]
\newtheorem{proposition}{Proposition}[section]
\newtheorem{corollary}{Corollary}[section]
\newtheorem{remark}{Remark}[section]
\newcommand{\bremark}{\begin{remark} \em}
\newcommand{\eremark}{\end{remark} }

\numberwithin{equation}{section}

\title{The Allen-Cahn equation and general minimal cones}

\author{Oscar Agudelo}
\address{Universidad Nacional de Colombia Sede Medell\'{i}n\\ Campus el Volador}
\email {oiagudel@unal.edu.co}

\author{Matteo Rizzi}
\address{Universit\'{a} degli Studi di Bari Aldo Moro, via Orabona 4, 70125, Bari, Italy.}
\email{matteo.rizzi@uniba.it, mrizzi1988@gmail.com}
\thanks{The research of Oscar Agudelo was supported by Universidad Nacional de Colombia,
Sede Medell\'{i}n, Facultad de Ciencias, Departamento de Matem\'{a}ticas, Grupo de Investigaci\'{o}n en Matem\'{a}ticas
de la Universidad Nacional de Colombia Sede Medell\'{i}n, Proyecto "Problemas lineales y no lineales de autovalores y aplicaciones a las ecuaciones en derivadas parciales", Fondo de Investigaci\'{o}n de la Facultad de Ciencias, Proyecto Hermes codigo 67035. The research of Matteo Rizzi was partially supported by the Università degli studi di Bari Aldo Moro, through a TNE project}

\begin{document}

\begin{abstract}
We construct solutions to the Allen-Cahn equation $\Delta u+u-u^3=0$ in $\R^{N+1}$, with $N\ge 3$, whose zero level set is asymptotic (at infinity) to a given minimal cone. Our construction is quite general, since we only use a suitable nondegeneracy assumption; neither stability nor symmetry of the underlying cone is required. Moreover, we calculate the morse index of such solutions. In particular, our main result generalizes a result of \cite{pacard2013stable} dealing with stable solutions to the Allen-Cahn equation in high dimensions. More precisely, we give a positive answer to a remark raised in \cite{pacard2013stable} about the possibility to relax the assumption for the underlying cone is area-minimizing or at least stable. As a corollary, we also obtain solutions with infinite morse index and connected zero level set.
\end{abstract}
\maketitle

\section{Introduction}

This work is concerned with entire bounded solutions of the classical semilinear elliptic equation
\begin{equation}
\label{allen-cahn}
\Delta u-W'(u)=0\quad\hbox{in} \quad \R^{N+1},
\end{equation}
where $N\geq 3$, $W\in C^2(\R)$ is even and satisfies $W(\pm a)=0$, for some $a>0$, $W(s)>0$ for $s\neq \pm a$, and $W''(\pm a)>0$. A well-known and well explored example of such $W$ is given by nonnegative balanced double well potential $$
W(s)=\frac{(1-s^2)^2}{4} \quad \hbox{with} \quad W'(s)=-s+s^3,\quad s\in \R,
$$
which corresponds to the Allen-Cahn equation.

Let $C$ be a given minimal cone $C$ in $\R^{N+1}$. In this work, we establish conditions on $C$ that guarantee existence of solutions to \eqref{allen-cahn} with the property that their nodal set is asymptotic, in an appropriate sense, to the cone $C$.

In principle, we require neither symmetry nor stability assumptions about the given cone $C$. These two conditions are required in several works, in order to study geometric properties of such cones (see  \cite{alencar2005m,hardt1985area,Mazet_2017} and references therein), or in some cases to obtain solutions of semilinear equations of the type \eqref{allen-cahn} (see \cite{dang1992saddle,CABRETERRA2009,CABRETERRA2010}).

\medskip

In this last regard, the authors in \cite{pacard2013stable} start with an area minimizing cone and make use of the standard {\it infinite dimensional Lyapunov-Schimdt reduction method} to find stable solutions (see Definition \ref{def:morse_index_allen-cahn} in page \pageref{def:morse_index_allen-cahn}) having the property at which we are aiming.

As we will see, the minimal requirements on the underlying cone yield both new results and some of the related known results in the literature. To be more precise, the results hereby presented complete and complement the results in \cite{pacard2013stable,agudelo2022doubling,agudelo2022k,agudelo2024jacobi} related to this topic. We remark that the construction in \cite{pacard2013stable} is based on the stability of the underlying cone, while the constructions in \cite{agudelo2022doubling,agudelo2022k,agudelo2025fredholm} are based on symmetry. For these reasons, this work is a relevant generalization.\\

In \cite{del2013entire} and for $N=2$, the authors showed the existence of solutions to \eqref{allen-cahn} having zero level set close to a large dilation of an embedded minimal surface with finite total curvature, satisfying certain nondegeneracy assumptions. This result relies on the nondegeneracy of the underlying surface rather than on symmetry assumptions. Nonetheless, an example of such surfaces is the catenoid {\it} and this case yields axially symmetric solutions of \eqref{allen-cahn}. Using symmetry arguments and again nondegenaracy assumptions in the symmetric geometric setting, the previous result was generalized to the case $N \geq 3$ in \cite{agudelo2016vyvsvsi}, where the authors built an axially symmetric solution to \eqref{allen-cahn} whose nodal set is  close to a large dilation of a logarithmic correction of the higher dimensional catenoid. This logarithmic correction, which is governed by the Liouville equation, is needed due to the fact that outside a large ball, the higher dimensional catenoid is asymptotic to two parallel planes.\\ 

Let us now introduce some notation. For any $n\in \mathbb{N}$, $\mathbb{S}^{n-1}$ denotes the unit sphere in $\R^{n}$ centered at the origin. Also,  $O(n)$ denotes the group of orthogonal transformations of $\R^n$. Let $S$ be a subgroup of $O(n)$. A set $D\subset \R^{n}$ is $S-$invariant if for any $\rho\in S$, $\rho(D)\subset D$. If $D$ is an $S-$invariant set, a function $f:D\to \R$ is $S-$invariant if for any $\rho\in S$, $f\circ\rho=f$ in $D$. We remark that $S$ is allowed to be the trivial subgroup.\\ 

Throughout this work $N\ge 3$ and $C\subset \R^{N+1}$ is a minimal cone of codimension one, i.e.,  
$$
C:=\{r\theta\in \R^{N+1}\,:\, r>0 \quad \hbox{and} \quad \theta \in \Gamma\},
$$
where $\Gamma\subset \mathbb{S}^{N}$ is a minimal submanifold of $\mathbb{S}^N$ of codimension one. We say that $C$ is {\it trivial} if $C$ is a hyperplane, i.e., a subspace of $\R^{N}$ of dimension $N$.\\

Let $\nu_{\Gamma}$ denote a fixed choice of the unit normal vector of $\Gamma$ tangent to $\mathbb{S}^{N}$. From the homogeneity of the cone, for $r\theta \in C$, $\nu_C(r\theta):=\nu_{\Gamma}(\theta)$ is a fixed choice of the unit normal vector to $C$ in $\R^{N+1}$ at the point $r\theta$. In what follows we write for short $\nu_{C}(r\theta)=\nu_C(\theta)$. 

Let $\Sigma\subset \R^{N+1}$ be a smooth, complete and embedded minimal hypersurface of $\R^{N+1}$ of codimension one let $S\subset O(N+1)$ be a subgroup. The hypersurface $\Sigma$ is the base object for the nodal set of the solutions we are looking for.

For $R>0$, we write
$$
\Sigma_R:= \Sigma \cap \big(\R^{N+1}\setminus \overline{B_{R}(0)}\big) \qquad \hbox{and}\qquad K_R:= \Sigma \setminus \Sigma_R.
$$

We assume the following hypotheses on $C$ and $\Sigma$: 
\begin{itemize} 
\item[(H1)] $C$ is a nontrivial and $S-$invariant cone;
\vskip 3pt
\item[(H2)] $\Sigma$ is $S-$invariant;
\vskip 3pt
\item[(H3)] $\Sigma$ is asymptotic to the cone $C$, that is, there exist $R>0$ and a smooth function $w:(R,\infty)\times\Gamma\to \R$ such that $\lim \limits_{r\to \infty} \|w(r,\cdot)\|_{C^2(\Gamma)}=0$ and such that
\begin{equation}\notag
\Sigma_R=\{r\theta+w(r,\theta)\nu_C(\theta):\,\theta\in\Gamma,\,r>R\}.
\end{equation}
and $K_R:=\Sigma\setminus\Sigma_R$ is compact.
\color{black}\item[(H4)] $S$ is maximal with respect to property (H1). In other words, if $\rho\in O(N+1)$ is such that $\rho(C)\subset C$, then $\rho\in S$.
\end{itemize}
Note that $S$ could not be the trivial group in the case there are symmetries.\\

Since $\Sigma$ is complete, embedded and smooth, from hypothesis (H3), $\R^{N+1}\setminus \Sigma$ is has two connected components, which we denote by $\Omega_{\pm}$. We will see after the statement of our main result that there is no loss of generality in assuming that $C$ is nontrivial.\\

In order to use such hypersurfaces to construct solutions to the Allen-Cahn equation, we need to introduce some nondegeneracy assumption. For this purpose, we introduce the Jacobi operator $J_\Sigma:=\Delta_\Sigma+|A_\Sigma|^2$, where $\Delta_\Sigma$ is the Laplace-Beltrami operator of $\Sigma$ and $|A_\Sigma|^2$ is the squared norm of the second fundamental form $A_\Sigma$ of $\Sigma$. 

Following \cite{agudelo2025fredholm}, for $\nu\in\R$, we define the spaces
\begin{equation}\notag
D_\nu(\Sigma):=\{\phi\in C^2(\Sigma):J_\Sigma \phi=0,\,\phi(1+|y|)^{-\nu}\in L^\infty(\Sigma)\}
\end{equation}
and
\begin{equation}\notag
D_\nu(\Sigma,S):=\{\phi\in D_\nu(\Sigma):\,\phi\text{ is $S$-invariant}\}.
\end{equation}
The distributional solutions $\phi\in \mathcal{D}'(\Sigma)$ of $J_{\Sigma}\phi=0$ are known as the \textit{Jacobi fields} of $\Sigma$. In particular, we say that a Jacobi field is \textit{geometric} if it is generated by either of the geometric invariances of the zero mean curvature equation, that is dilations, rotations or translations. We denote the space of the geometric Jacobi fields by $G(\Sigma)$ (see also \cite{agudelo2024jacobi}). Moreover, we set
\begin{equation}\notag
\bar{\nu}:=\inf\{\nu\in\R:\, \limsup_{|y|\to\infty}|y|^{-\nu}|\zeta_0(y)|=0\},
\end{equation}
where $\zeta_0(y):=y\cdotp\nu_\Sigma(y)$ is the Jacobi field arising from dilations. From Lemma 3.1 in \cite{agudelo2025fredholm} $\bar{\nu}$ is well defined and finite, and $\bar{\nu}\leq 0$.

\begin{definition}\label{def:S_dila_nond} 
\begin{itemize}
    \item[i.] The minimal surface $\Sigma$ is dilation nondegenerate if $\bar{\nu}>2-N$ and $D_\nu(\Sigma)=\{0\}$ for any $\nu<\bar{\nu}$.

    \vskip 3pt
    \item[ii.] The minimal surface $\Sigma$ is $S$-dilation nondegenerate if $\bar{\nu}>2-N$ and $D_\nu(\Sigma,S)=\{0\}$ for any $\nu<\bar{\nu}$.\end{itemize}
\end{definition}
Property (ii) of Definition \ref{def:S_dila_nond} is the appropriate notion of nondegeneracy which enables us to construct solutions to equation (\ref{allen-cahn}) vanishing close to a dilated copy of $\Sigma$. It is called $S$-dilation-nondegeneracy since in many known examples, the Jacobi field $\zeta_0(y):=y\cdotp \nu_\Sigma(y)$ coming from dilations fulfills $\zeta_0(y)=O(|y|^{\bar{\nu}})$ as $|y|\to\infty$. This is the case if, for instance, the asymptotic cone is stable (see Remark 
$3.2$ in \cite{agudelo2022k}). Heuristically, the $S$-dilation-nondegeneracy is equivalent to saying that there are no $S$-invariant Jacobi fields which decay faster than $\zeta_0$. In section 3 we present some examples of such cones $C$ and related groups of symmetry $S\subset O(N+1)$ and minimal hypersurfaces $\Sigma$ that satisfy our set of assumptions.\\

Next, let $u\in C^2(\R^{N+1})$ be a solution of \eqref{allen-cahn} and consider the linearized operator of \eqref{allen-cahn} around $u$, namely
$$
L_u \phi:=-\Delta \phi + W''(u) \phi\quad \hbox{in } \R^{N+1},
$$
and its associated quadratic form 
\begin{equation}
\label{def:quadr_sln_u}
\mathbb{Q}_u(\phi):=\int_{\R^{N+1}}\Big(|\nabla \phi |^2 +W''(u)\phi^2 \Big)dx \quad \hbox{for } \phi \in C^{\infty}_c(\R^{N+1}).    
\end{equation}

\begin{definition}
\label{def:morse_index_allen-cahn}
The morse index of the solution $u$ is defined as 
\begin{small}
\begin{equation}\label{def:formula_morse_index_allen-cahn}
{\rm Morse}(u):=\sup \Big\{{\rm dim}(Y) \,:\, Y\hbox{ nontrivial vector subspace of }C^{\infty}_c(\R^{N+1}),\,\,  \forall\, \phi \in Y\setminus \{0\}: \Q_u(\phi) <0 \Big\}.
\end{equation}
\end{small}

In the case the set over which the supremum is taken is the empty set, we define ${\rm Morse}(u):= 0$ and we say that $u$ is a stable solution.
\end{definition}

Observe that ${\rm Morse}(u)\in \mathbb{N}\cup\{0,\infty\}$ and roughly speaking, it counts the maximum number of linearly independent directions along which the solution $u$ of \eqref{allen-cahn} looses its locally energy minimizing character.\\

In a similar manner, for the hypersurface $\Sigma$ we consider the linearization of the mean curvature operator around $\Sigma$, namely the Jacobi operator of $\Sigma$
$$
J_{\Sigma}\varphi:= \Delta_{\Sigma} \varphi + |A_{\Sigma}|^2\varphi \quad \hbox{in }\Sigma,
$$ 
where $\Delta_\Sigma$ is the Laplace-Beltrami operator of $\Sigma$ and $|A_{\Sigma}|$ is the norm of the second fundamental form of $\Sigma$. The quadratic form associated to $J_{\Sigma}$ is
\begin{equation}\label{def:quadr_Sigma}
\mathcal{Q}_{\Sigma}(\varphi):=\int_{\Sigma}\Big(|\nabla_{\Sigma} \varphi |^2 -|A_{\Sigma}|^2\varphi^2 \Big)d\sigma \quad \hbox{for } \varphi \in C^{\infty}_c(\Sigma ).
\end{equation}

\begin{definition}
\label{def:morse_index_Sigma}
The morse index of the hypersurface $\Sigma$ is defined as 
\begin{small}
\begin{equation}\label{def:formula_morse_index_Sigma}
{\rm Morse}(\Sigma):=\sup \Big\{{\rm dim}(X) \,:\, X\hbox{ nontrivial vector subspace of }C^{\infty}_c(\Sigma),\,\,  \forall \,\varphi \in X\setminus \{0\}: \mathcal{Q}_\Sigma(\varphi) <0 \Big\}.
\end{equation}
\end{small}

In the case the set over which the supremum is taken is the empty set, we define ${\rm Morse}(\Sigma):= 0$ and we say that $\Sigma$ is a stable minimal hypersurface.
\end{definition}


A well known example,  where the discussion has motivated a great deal of progress in the analysis od PDEs and stability matters is the Allen-Cahn equation
\begin{equation}
\label{allen-cahn-eq}
-\Delta u=u-u^3\qquad\text{in $\R^{N+1}$,}
\end{equation}
which was introduced in \cite{allen1979microscopic} as the prototype equation for the continuous modeling of {\it phase transition phenomena}. In the one dimensional case, equation \eqref{allen-cahn-eq} reads as the ordinary diffe\-rential equation (ODE)
\begin{equation}\label{Allen-Cahn-eqn 1D}
v''+ v-v^3=0 \quad \hbox{in} \quad \R.
\end{equation}

Imposing the conditions $v(\pm \infty)=\pm 1$, equation \eqref{Allen-Cahn-eqn 1D} has a unique (up to translations) solution given by
\begin{equation}\label{Allen-Cahn-eqn 1D Sln}
v_{\star}(t) = \tanh(t/\sqrt{2}) \quad \hbox{for} \quad t\in \R.
\end{equation} Besides, $v_{\star}$ is strictly monotone increasing, since $v_{\star}'>0$ in $\R$, and $\{v_{\star}=0\}=\{0\}$.

\vskip 5pt
Given a unit vector $\vec{a}\in \R^{N+1}$ and $\xi_0 \in \R^{N+1}$, the function
\begin{equation}\label{DeGiorgiconjectsln}
u(\xi):= v_{\star}(t), \quad t= \vec{a}\cdot (\xi -\xi_0)\quad \hbox{for} \quad \xi\in \R^{N+1}
\end{equation}
is an entire, bounded and sign-changing solution of \eqref{allen-cahn-eq}. $u(\xi)$ is  monotone in the direction of  $\vec{a}$ and its nodal set is the hyperplane of equation $\vec{a}\cdot (\xi-\xi_0)=0$. This hyperplane is a trivial example of a cone in $\R^{N+1}$ and in this regard $u(\xi)$ is also trivial, since, up to a translation and a rotation, it  depends on one variable only. In particular, since these solutions have a positive directional derivative, they are stable.

\vskip 5pt
In 1978, E. De Giorgi conjectured  that if $1 \leq N \leq 7$, then any solution to \eqref{allen-cahn-eq} which is monotone in one direction and fulfills $|u|\le 1$ actually satisfies \eqref{DeGiorgiconjectsln}, for some unit vector $\vec{a}$ and some point $\xi_0$. This conjecture was motivated by Modica's intuition and the developments in $\Gamma-$convergence theory (see \cite{de1979convergence}), which showed evidence of the strong connection between the study of bounded solutions \eqref{allen-cahn-eq} and the theory of minimal hypersurfaces.

\vskip 5pt
In \cite{ambrosio2000entire,ghoussoub1998conjecture}, De Giorgi's conjecture was established in dimensions $N+1=2,3$. In \cite{savin2009regularity}, it was proven true in dimensions $4 \leq N+1 \leq 8$, under the additional assumption
\begin{equation}
\label{limpm1}
\lim_{\xi_{N+1}\to \pm \infty} u(\xi',\xi_{N+1}) = \pm 1.
\end{equation}

In \cite{liu2026stable} the authors show that every bounded stable entire solution of \eqref{allen-cahn} in $\mathbb{R}^4$ with the standard double well potential is either constant $\pm 1$ or it is the one-dimensional heteroclinic solutions connecting $\pm 1$. This result yields the full De Giorgi conjecture in $\mathbb{R}^4$. The proof reduces stability to a weak stability inequality on the zero set, then combines the Gauss-Bonnet formula with logarithmic cutoffs, without assuming any a priori energy growth bound.

In \cite{del2011giorgi} and for $N+1\geq 9$, the authors constructed a solution which proves that the De Giorgi conjecture is sharp about the dimension. The construction was based on the nontrivial minimal graph $\Gamma$ built in \cite{bombieri1969minimal}. The solution found in \cite{del2011giorgi} is bounded, monotone in one direction and its zero level set is close to the a large dilation of the surface $\Gamma$.\\

The De Giorgi conjecture under the further assumption that the limits in \eqref{limpm1} are uniform in $x'$ is known as the Gibbons conjecture, which was proved to be true in any dimension in \cite{farina2003rigidity}.\\ 

Our main result in this work is the following.

\begin{theorem}\label{main-th}
Let $N\ge 3$. Let $W\in C^2(\R)$ be an even function such that $W(\pm a)=0<W(t)$ for any $t\in\R\setminus\{\pm a\}$ and $W''(\pm a)>0$. Let $S\subset O(N+1)$ be a subgroup and let $C\subset\R^{N+1}$ be a minimal cone of codimension $1$. Assume that $\Sigma\subset\R^{N+1}$ is an embedded minimal hypersurface such that (H1)-(H4) hold and that $\Sigma$ is $S$-dilation-nondegenerate. Then there exist a family $\{u_\eps\}_{\eps\in(0,\eps_0)}$ of $S$-invariant solutions to equation (\ref{allen-cahn}) in $\R^{N+1}$ 
such that
\begin{enumerate}
\item $u_\eps(\cdotp/\eps)\to\pm a$ as $\eps\to 0$, uniformly on compact subsets of $\Omega_\pm$.
\item $Morse (u_\eps)\ge Morse(\Sigma)$.
\item If in addition, $\Sigma$ is dilation-nondegenerate and $\bar{\nu}\ge -\frac{N-2}{2}$, then $Morse (u_\eps)= Morse(\Sigma)$.
\end{enumerate}
\end{theorem}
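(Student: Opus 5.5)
The plan is an infinite dimensional Lyapunov--Schmidt reduction in the spirit of \cite{pacard2013stable,del2011giorgi,agudelo2022k}, carried out in the class of $S$-invariant functions. For small $\eps$ we work with the dilated surface $\Sigma_\eps:=\eps^{-1}\Sigma$. Using Fermi coordinates $x=y+(t+h(\eps y))\nu_{\Sigma_\eps}(y)$, valid in a tubular neighbourhood whose width grows linearly in $|y|$ because the curvatures of $\Sigma_\eps$ decay like $\eps/(1+\eps|y|)$, we take as approximate solution $u_0=v_\star(t)$, extended by $\pm a$ through a cutoff. Here $v_\star$ is the heteroclinic for $W$ and $h$ is a small $S$-invariant normal perturbation. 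By (H2) and the $S$-invariance of the construction, every object stays $S$-invariant.

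We look for $u=u_0+\phi$ and split the equation into two parts, orthogonal to $v_\star'(t)$ and along it. The orthogonal part is solved by a fixed point argument in weighted Hölder spaces with weights $(1+\eps|y|)^{-\mu}$, using the linear theory for $\partial_t^2+\Delta_{\Sigma_\eps}-W''(v_\star)$ under orthogonality, as in \cite{pacard2013stable}. Projecting onto $v_\star'$ then gives the reduced equation
$$J_\Sigma h=\eps\,\mathcal N(h)+E_\eps \quad\text{on }\Sigma,$$
where the error $E_\eps$ decays like $(1+|y|)^{-2-\gamma}$ or faster. The key input is the Fredholm theory of \cite{agudelo2025fredholm}. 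Since $\bar\nu>2-N$ and $D_\nu(\Sigma,S)=\{0\}$ for every $\nu<\bar\nu$, duality shows that $J_\Sigma$ is surjective, with a bounded right inverse, from $S$-invariant weighted spaces of growth $\nu\in(2-N,\bar\nu)$ onto data decaying like $|y|^{\nu-2}$. The possible cokernel would have to be spanned by $S$-invariant Jacobi fields decaying like $|y|^{2-N-\nu}$, and these vanish by nondegeneracy. Hypothesis (H4) ensures that no rotational Jacobi field is $S$-invariant in a way that obstructs this. A contraction mapping then yields $h=h_\eps$, and the uniform convergence $u_\eps(\cdot/\eps)\to\pm a$ on compact subsets of $\Omega_\pm$ follows from $|h_\eps|\lesssim \eps^{\alpha}(1+|y|)^{\nu}$ together with the exponential convergence of $v_\star$ to $\pm a$. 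This proves item (1).

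For item (2), we take $X\subset C^\infty_c(\Sigma)$ on which $\mathcal Q_\Sigma<0$, of dimension $k$. For $\varphi\in X$ we test with $\Phi(x)=\varphi(\eps y)v_\star'(t)\chi$. The standard expansion gives
$$\mathbb Q_{u_\eps}(\Phi)=\eps^{2-N}\Big(\|v_\star'\|_{L^2}^2\,\mathcal Q_\Sigma(\varphi)+o(1)\Big),$$
uniformly on the unit sphere of the finite dimensional space $X$. Hence the image of $X$ is a $k$-dimensional negative space for $\mathbb Q_{u_\eps}$.

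Item (3), the upper bound on the Morse index, is the main obstacle. Suppose $\mathrm{Morse}(u_\eps)>\mathrm{Morse}(\Sigma)$ along a sequence $\eps\to0$. We would take an $L^2$-orthonormal family of negative directions and decompose each as $\Phi=\varphi(\eps y)v_\star'(t)+\Phi^\perp$. The coercivity of the one dimensional operator on $(v_\star')^\perp$ controls $\Phi^\perp$. For the projections $\varphi$ we need a compactness argument showing that they give linearly independent functions on which $\mathcal Q_\Sigma\le0$ in a limit. The danger is loss of mass at infinity. To exclude it we use that, when $\bar\nu\ge-\frac{N-2}{2}$ and $\Sigma$ is fully dilation-nondegenerate, $\Sigma$ has finite index and $J_\Sigma$ has no $L^2$-type (decay $<-\frac{N-2}{2}$) kernel. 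Consequently the quadratic form $\mathcal Q_\Sigma$ is nonnegative and coercive on functions supported outside a large compact set in the appropriate Hardy-weighted norm; this is where Hardy's inequality $\int|\nabla\varphi|^2\ge\frac{(N-2)^2}{4}\int|y|^{-2}\varphi^2$ versus $|A_\Sigma|^2\sim c|y|^{-2}$ enters, through the threshold $-\frac{N-2}{2}$. A limiting negative or null eigenfunction would then be an $L^2$ Jacobi field or a negative direction beyond $\mathrm{Morse}(\Sigma)$, a contradiction. I would first establish this coercivity at infinity for $\mathcal Q_\Sigma$, and then transfer it to $\mathbb Q_{u_\eps}$ uniformly in $\eps$ in the exterior region.
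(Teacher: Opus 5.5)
\emph{Item (1).} Your outline of (1) and (2) matches the paper, but the weight window in the reduction is inverted. In the duality argument you describe, the cokernel of $J_\Sigma:C^{2,\alpha}_\nu(\Sigma,S)\to C^{0,\alpha}_{\nu-2}(\Sigma,S)$ consists of $S$-invariant Jacobi fields of growth $2-N-\nu$. $S$-dilation-nondegeneracy removes them only when $2-N-\nu<\bar\nu$, i.e.\ $\nu>2-N-\bar\nu$, which is the hypothesis of Theorem~\ref{prop-right-inverse-Jacobi}. Your range $\nu\in(2-N,\bar\nu)$ fails in two ways:
\begin{itemize}
\item For $\nu<2-N-\bar\nu$ the cokernel weight exceeds $\bar\nu$, so $\zeta_0$ itself (when $S$-invariant) can obstruct surjectivity.
\item If $\bar\nu\le-\frac{N-2}{2}$, as for the low-dimensional Lawson surfaces behind Corollary~\ref{cor-cz-lowdim}, your interval contains no admissible $\nu$ at all.
\end{itemize}
Besides $\nu>2-N-\bar\nu$ you need $\nu<0$ so that $h$ decays, and $\nu\ge-1$ so that the $O(|y|^{-3})$ term $\eps\,{\rm tr}A_\Sigma^3$ lies in $C^{0,\alpha}_{\nu-2}$. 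You also need $\nu$ to avoid the indicial roots. Together these give the paper's choice \eqref{choice-nu}.

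\emph{Item (3).} Your route differs from the paper's, and its key step is derived from the wrong hypothesis. Dilation-nondegeneracy with $\bar\nu\ge-\frac{N-2}{2}$ does not give finite index. It only constrains Jacobi fields decaying faster than $|y|^{\bar\nu}$, and $\bar\nu=-\frac{N-2}{2}$ is exactly the value for the surfaces of Lemma~\ref{lemma-examples-morse-infty}, whose cones are unstable and whose index is infinite. The infinite-index case is trivial by (2), which is how the paper starts.

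If the index is finite, $C$ is stable, i.e.\ $\frac{(N-2)^2}{4}+\lambda_0\ge0$. Hardy's inequality then gives only nonnegativity of $\mathcal Q_\Sigma$ at infinity. Coercivity in the $p_0$-weighted norm, which is what stops mass from escaping, needs $\Lambda_0>0$. The threshold on $\bar\nu$ plays no role there; it is used only at the end, to kill $\mathcal H(\Sigma)$-Jacobi fields, which decay at least like $|y|^{-\frac{N-2}{2}}$.

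Two further points need fixing. First, a limit of negative subspaces only gives an $(n+1)$-dimensional subspace of $\mathcal H(\Sigma)$ with $\mathcal Q_\Sigma\le0$. Turning this into an actual Jacobi field requires discreteness of the spectrum of $-J_\Sigma\varphi=\mu p_0\varphi$ near $0$, which is the same gap at infinity. Second, you must normalize in the $p$-weighted norm, not in $L^2(\R^{N+1})$.

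The paper avoids the abstract spectral step by using eigenfunctions of the truncated weighted problem \eqref{tr-prob-BR}:
\begin{itemize}
\item it shows $-\bar\mu\eps^2\le\lambda_{i,k}<0$;
\item it passes to the limit in the projected equation \eqref{eq-varphi-R}, obtaining $p_0$-orthogonal solutions of $-J_\Sigma\varphi_i=\mu_ip_0\varphi_i$;
\item it excludes loss of mass by the weighted Fourier estimate on the conical end (Lemma~\ref{rem-compact}), which rests on the same non-resonance $\delta\ne\pm{\rm Re}\,\Lambda_j(\mu)$;
\item it uses ${\rm Morse}(\Sigma)=n$ to force $\mu_{n+1}=0$.
\end{itemize}
Once you derive coercivity from the stability of $C$ together with $\Lambda_0>0$, your quadratic-form argument is a legitimate, more purely variational alternative.
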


\red{}

\begin{remark}
\begin{itemize}
\item We actually have $2$ families of solutions enjoying the properties listed in Theorem \ref{main-th}. In fact, being $W$ even, if $u$ is a solution to (\ref{allen-cahn}), then $-u$ is also a solution.
\item We stress that Theorem \ref{main-th} holds under very weak assumptions about $\Sigma$. In fact, neither stability nor symmetry are required (recall that $S$ may be the trivial group). In particular, our result answers a question raised by Pacard and Wei in Remark $1.1$ of \cite{pacard2013stable} about possible generalisations of their Theorem $1.2$. For further comparison between our work and the results in \cite{pacard2013stable}, we refer to Section $3$.
\end{itemize}
\end{remark}
A particular case of potential fufilling the assumptions of Theorem \ref{main-th} is given by the classical double-well potential $W(u):=\frac{(1-u^2)^2}{4}$, corresponding to the Allen-Cahn equation (\ref{allen-cahn-eq}). In any case, our result holds true for more general potentials. More precisely, it is enough to take a potential which guarantees the existence of an odd heteroclinic solution to the cooresponding ODE connecting the stable equilibria $\pm a$ and converging to $\pm a$ exponentially fast as $t\to\pm\infty$ (see Section $3$).\\

We stress that the assumption that the limit cone $C$ is nontrivial is not restrictive if $W$ is such that the Gibbons conjecture is true, that is $W'<0$ in $(0,a)$, $W'\ge 0$ in $(a,\infty)$ and there exist $0<t_0<t_1<a$ and $\delta_0>0$ such that $W'$ is nondecreasing in $(t_1,a)$ and $W'(t)\le -\delta_0 t$ in $(0,t_0)$ (see \cite{farina2003rigidity}). These conditions are satisfied, for instance, by the classical double-well potential $W(t):=\frac{(1-t^2)^2}{4}$. In fact, in this case any solution $u$ to \eqref{allen-cahn} whose zero level set is bounded in one direction is $1$-dimensional, more precisely $u(\xi)=v_\star(\vec{a}\cdotp(\xi-\xi_0))$, for some $\vec{a}\in \mathbb{S}^N$ and $\xi_0\in\R^{N+1}$, where $v_\star$ is the unique solution to the problem
$$-v''_\star+W'(v_\star)=0,\qquad v_\star(0)=0,\qquad v_\star(\pm\infty)=\pm a.$$
Some explicit examples of minimal hypersurfaces enjoying the hypothesis of Theorem \ref{main-th} are known to exist. As a consequence, we have some remarkable applications of Theorem \ref{main-th}, which will be dealt with in Section $3$. More precisely, we will give an alternative proof of Theorem $1.2$ of \cite{pacard2013stable} about stable solutions to the Allen-Cahn equation (\ref{allen-cahn-eq}) if we start from a stable minimal hypersurface and we will construct new $O(m)\times O(n)$-invariant solutions to (\ref{allen-cahn}) in dimension $4\le m+n=N+1\le 7$ with infinite Morse index.\\

The plan of the paper is the following. In Section $2$ we will consider the Jacobi equation $J_\Sigma\phi=f$ in a given minimal hypersurface $\Sigma$ and give some examples of minimal hypersurfaces which satisfy our nondegeneracy assumptions; in Section $3$ we will discuss some applications of Theorem \ref{main-th} to some particular hypersurfaces; in Section $4$ we perform an ODE analysis which shows that our hypothesis about $W$ are actually enough to guarantee the validity of Theorem \ref{main-th}; in Section $5$ we construct the solutions mentioned in Theorem \ref{main-th} through an infinite dimensional Lyapunov-Schmidt reduction; in Section $6$ we deal with the Morse index 
of such solutions.

\section{Linear theory for the Jacobi equation and examples}

In this section we present a linear theory to solve the Jacobi equation
\begin{equation}
\label{eq-Jacobi}
J_\Sigma\phi=f\qquad\text{in }\Sigma
\end{equation}
on a given minimal hypersurface satisfying (H1)-(H4) under suitable assumptions on $f$. We also recall some examples of $S$-dilation-nondegenerate minimal hypersurfaces. Our discussion here is a briefing of the results and examples presented in \cite{agudelo2025fredholm}. We also refer the reader to \cite{lockhart1985elliptic} and \cite{pacard2008connected} for on hypersurfaces of conic type and connected sums type, respectively.

\vskip 3pt
Following \cite{agudelo2025fredholm}, we introduce the functional analytic setting we use for solving equation \eqref{eq-Jacobi}. Let $\nu \in \R$ and $S$ be a subgroup of $O(N+1)$. For $\alpha \in (0,1)$, the space $C^{0,\alpha}_{\nu}(\Sigma,S)$ denotes the space of $S$-invariant functions $f\in C^{0,\alpha}_{loc}(\Sigma)$ with finite norm
\begin{equation}\notag
\|f\|_{C^{0,\alpha}_{\nu}(\Sigma)}:=\sup_{y\in\Sigma}(|y|+1)^{-\nu}\|f\|_{C^{0,\alpha}(B_1(y))},
\end{equation}
where $B_1(y)\subset \Sigma$ is the geodesic ball in $\Sigma$ of radius one and centered at $y\in \Sigma$.

\vskip 3pt
We also use the notation 
$$\|f\|_{\infty,\nu}:=\|(|y|+1)^{-\nu}f\|_\infty.
$$

The space $C^{2,\alpha}_{\nu}(\Sigma,S)$ denotes the space of $S$-invariant functions $\phi\in C^{2,\alpha}_{loc}(\Sigma)$ with finite norm
\begin{equation}\notag
\|\phi\|_{C^{2,\alpha}_\nu(\Sigma)}:=\sup_{y\in\Sigma}(|y|+1)^{-\nu+2}\|\nabla^2_\Sigma \phi\|_{C^{0,\alpha}(B_1(y))}
+\|\nabla_\Sigma \phi\|_{\infty,\nu-1}+\|\phi\|_{\infty,\nu}.
\end{equation}

We also consider the Jacobi operator of $\Gamma$, $-J_\Gamma:=-(\Delta_\Gamma+|A_\Gamma|^2)$, and we recall that the eigenvalues $\lambda_j$ of $-J_\Gamma:=-(\Delta_\Gamma+|A_\Gamma|^2)$ satisfy 
$$\lambda_0<\lambda_1\le\dots\le \lambda_j\le \dots,\qquad\lambda_j\to\infty\qquad j\to\infty.$$
Recalling the notations in \cite{agudelo2024jacobi}, we set
$$j_0:=\min\left\{j\ge 0:\,\left(\frac{N-2}{2}\right)^2+\lambda_j\ge 0\right\}.$$
For $j\ge j_0$, we set
$$\Lambda_j:=\sqrt{\left(\frac{N-2}{2}\right)^2+\lambda_j}\ge 0.
$$
It is known that $j_0=0$ if and only if the cone $C$ is stable (see \cite{HS}). 
On the other hand, if $C$ is not stable, for 
$0\leq j< j_0$, we set $$\Lambda_j:=i\sqrt{-\left(\left(\frac{N-2}{2}\right)^2+\lambda_j\right)}.$$
The numbers $-\frac{N-2}{2}\pm{\rm Re}(\Lambda_j)$ are known as the \textit{indicial roots} of $C$.\\

\begin{theorem}
\label{prop-right-inverse-Jacobi}
Assume (H1)-(H4). In addition, assume that $\Sigma$ is $S$-dilation-nondegenerate, $\nu>2-N-\bar{\nu}$ and that $\nu$ is not an indicial root of $C$. Then, for any $f\in C^{0,\alpha}_{\nu-2}(\Sigma,S)$, there exist a solution $\phi:=\Phi(f)\in C^{2,\alpha}_{\nu}(\Sigma,S)$ to the Jacobi equation \eqref{eq-Jacobi} such that 
\begin{equation}
\label{est-Jacobi-eq-lin}
\|\phi\|_{C^{2,\alpha}_{\nu}(\Sigma)}\le c\|f\|_{C^{0,\alpha}_{\nu-2}(\Sigma)},  
\end{equation}
for some constant $c>0$ depending only of $\Sigma,\nu$.
\end{theorem}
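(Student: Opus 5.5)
We prove Theorem \ref{prop-right-inverse-Jacobi} by duality combined with the Fredholm theory of $J_\Sigma$ on weighted spaces for conical ends (in the spirit of \cite{lockhart1985elliptic} and \cite{agudelo2025fredholm}).

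\textbf{Step 1: model operator on the end.} In the region $\Sigma_R$, write $y=r\theta+w(r,\theta)\nu_C(\theta)$. By (H3), $J_\Sigma$ is a small perturbation, decaying as $r\to\infty$, of the cone operator
$$J_C=\partial_r^2+\frac{N-1}{r}\partial_r+\frac{1}{r^2}J_\Gamma.$$
Expand a function in eigenfunctions of $-J_\Gamma$. The $j$-th mode then satisfies an Euler equation whose homogeneous solutions are $r^{-\frac{N-2}{2}\pm\Lambda_j}$. When $\nu$ is not an indicial root, each mode equation $J_C\phi_j=f_j$ with $|f_j|\lesssim r^{\nu-2}$ has a unique solution with $|\phi_j|\lesssim r^{\nu}$. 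It is obtained by integrating from $\infty$ or from $R$ according to the position of $\nu$ relative to the roots. Summing the modes, using Schauder estimates on dyadic annuli rescaled to unit size, gives a parametrix on the end with $C^{2,\alpha}_\nu$ bounds. Gluing it with an interior parametrix on the compact set $K_R$ shows that
$$J_\Sigma:C^{2,\alpha}_\nu(\Sigma,S)\to C^{0,\alpha}_{\nu-2}(\Sigma,S)$$
is Fredholm for every non-indicial $\nu$. $S$-invariance is preserved throughout, since $J_\Sigma$ commutes with $S$ by (H2) and we may average over $S$, or over its closure, which is compact.

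\textbf{Step 2: surjectivity by duality.} The cokernel of $J_\Sigma$ on weight $\nu$ is identified, via the $L^2(\Sigma)$ pairing, with the $S$-invariant kernel on the dual weight $\nu^*=2-N-\nu$. Indeed, since $d\sigma\sim r^{N-1}dr\,d\theta$, the pairing of $r^{\nu-2}$ against $r^{\nu^*}$ is borderline, and the indicial roots are symmetric about $-\frac{N-2}{2}$. The hypothesis $\nu>2-N-\bar\nu$ gives exactly $\nu^*<\bar\nu$. $S$-dilation nondegeneracy then says $D_{\nu^*}(\Sigma,S)=\{0\}$, so the cokernel vanishes and $J_\Sigma$ is onto.

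The technical point here is to check that an element $g$ annihilating the range is a genuine Jacobi field in $D_{\nu^*}$. It satisfies $J_\Sigma g=0$ weakly, hence smoothly by elliptic regularity. Its decay follows from the mode analysis of Step 1: the annihilation condition forces the slowly decaying modes to vanish, so that $g=O(r^{\nu^*})$. I will also need to handle the case where $\nu^*$ sits between two roots, replacing $\nu^*$ by a nearby non-indicial weight. This is harmless because the kernel condition holds for every $\nu<\bar\nu$.

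\textbf{Step 3: bounded right inverse.} Since $J_\Sigma$ is Fredholm and onto, its kernel is finite dimensional. Choose a closed complement $Y$ of the kernel in $C^{2,\alpha}_\nu(\Sigma,S)$. Then $J_\Sigma|_Y$ is a bijective bounded operator, so by the open mapping theorem its inverse $\Phi$ is bounded, which is \eqref{est-Jacobi-eq-lin}.

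I expect the main obstacle to be Step 1 together with the duality identification in Step 2. The difficulties are controlling the perturbation $J_\Sigma-J_C$, which decays only because $\|w(r,\cdot)\|_{C^2}\to0$ (no rate is assumed), and handling unstable cones, where complex $\Lambda_j$ produce oscillating modes of size $r^{-\frac{N-2}{2}}$. For the perturbation I would take $R$ large so that it is small in operator norm between weighted spaces and absorb it by a Neumann series. For the oscillating modes I would treat them with the real part $-\frac{N-2}{2}$ as a single indicial root, which is excluded by hypothesis.
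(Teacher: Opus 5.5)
Your outline is the standard weighted Fredholm/duality argument, which is essentially what the paper relies on. The paper does not prove this theorem itself but defers to Sections 2--3 of \cite{agudelo2025fredholm}; the method used there is visible in the weighted $L^2$ estimate of Section 6, via the conjugation $u=|y|^{\frac{N-2}{2}}\phi$, $t=\log r$ and Fourier modes of $-J_\Gamma$. It is the same mode-by-mode analysis on the end, with the hypothesis $\nu>2-N-\bar\nu$ entering exactly as your dual-weight condition $2-N-\nu<\bar\nu$. The one step to tighten is Step 2: on $C^{0,\alpha}_{\nu-2}$ a functional annihilating the range need not be represented by a function, so identifying it with a Jacobi field is not automatic there. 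Do the duality in weighted $L^2$ spaces instead, where the $L^2(\Sigma)$ pairing identifies the dual exactly and the borderline divergence disappears. Then return to $C^{2,\alpha}_\nu$ by shifting $\nu$ slightly within its gap between indicial roots and applying Schauder estimates on rescaled annuli.
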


We refer the reader to Sections 2 and 3 in \cite{agudelo2025fredholm} for the proof of Theorem \ref{prop-right-inverse-Jacobi}.\\

The first collection of examples we discuss is provided by Proposition 2.4 in \cite{agudelo2025fredholm}. To be more precise, under the assumptions (H1)-(H4), if $\Sigma$ is stable and does not intersect the cone $C$, then $\Sigma$ is dilation-nondegenerate. If in addition $\Sigma$ is $S$-invariant, then $\Sigma$ is also $S$-dilation-nondegenerate.

A particular case of the hypersurfaces mentioned above, relevant for the construction of solutions to the Allen-Cahn equation, are next described (see \cite{HS}).

\begin{theorem}[\cite{HS}]
\label{th-Sigma}
Let $N\ge 7$. Assume that $C\subset\R^{N+1}$ is a minimising nontrivial cone. Then there exist exactly two oriented, embedded smooth area minimising hypersurfaces $\Sigma^\pm\subset\R^{N+1}$ such that
\begin{enumerate}
\item $\Sigma^\pm$ are asymptotic to $C$ at infinity and do not intersect $C$.
\item $dist(\Sigma^\pm,\{0\})=1$.
\end{enumerate}
\end{theorem}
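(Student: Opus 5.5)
The proof I would give follows the strategy of Hardt and Simon \cite{HS}. It has four parts: construct a one-sided minimizer on each side of $C$, show it is smooth, show it is asymptotic to $C$ at infinity, and prove uniqueness by a dilation-and-maximum-principle argument.

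\textbf{Construction.} Write $\R^{N+1}\setminus \overline{C}=E_+\cup E_-$ for the two components. I treat $E_+$; the other side is symmetric.
\begin{enumerate}
\item For $R>1$, fix a smooth closed hypersurface $\gamma_R\subset \partial B_R\cap E_+$ that is a small normal push-off of $C\cap\partial B_R$ into $E_+$.
\item Let $T_R$ be an area-minimizing integral current in $\overline{B_R}$ with $\partial T_R=\gamma_R$. I choose it among boundaries of sets contained in $\overline{E_+}$; the constraint is harmless because $C$ is minimizing, so it is a barrier.
\item By the strong maximum principle for minimizing boundaries (Simon's version, which allows singular sets of codimension at least $7$), $\operatorname{spt}T_R$ does not touch $C$. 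Hence $T_R$ is an unconstrained minimizer inside $E_+$.
\item Rescale $T_R$ so that ${\rm dist}(\operatorname{spt}T_R,0)=1$. The only thing to check is that the rescaling factor stays bounded away from $0$ and $\infty$. This follows from a comparison showing that $T_R$ can be neither too close to nor too far from $C$ near the origin.
\item Using the uniform area bounds $|T_R\cap B_\rho|\le c\rho^N$, extract a subsequence converging to an area-minimizing boundary $\Sigma^+\subset\overline{E_+}$ with ${\rm dist}(\Sigma^+,0)=1$. Applying the maximum principle again shows that $\Sigma^+\cap C=\emptyset$.
\end{enumerate}

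\textbf{Asymptotics and regularity.} Blow down $\Sigma^+$ via $\lambda\Sigma^+$ with $\lambda\to0$. By monotonicity every tangent cone at infinity is a minimizing cone lying in $\overline{E_+}$. Since $C$ is minimizing and such a cone is one-sided, it must coincide with $C$. The sharper statement needed for property (1) is that $\Sigma^+$ is a graph $r\theta+w(r,\theta)\nu_C(\theta)$ with $w\to0$ in $C^2$.
\begin{enumerate}
\item Since $C$ has an isolated singularity, Allard–Almgren/Simon uniqueness of tangent cones gives this graphical representation outside a large ball.
\item The function $w$ solves the minimal surface equation over $C$, whose linearization is $\partial_r^2+\frac{N}{r}\partial_r+r^{-2}J_\Gamma$.
\item Positivity of $w$, together with separation of variables in the eigenfunctions of $-J_\Gamma$, forces $w\sim c\,r^{-\frac{N-2}{2}\pm\Lambda_0}\phi_0$. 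Here $\phi_0>0$ is the first eigenfunction, and $\Lambda_0$ is real because $C$ minimizing implies stable, i.e.\ $j_0=0$.
\end{enumerate}
Interior regularity is the step I expect to be the main obstacle. De Giorgi–Federer only gives a singular set of dimension at most $N-7$. To exclude it I would use the positive Jacobi field $\zeta_0=y\cdot\nu$ coming from dilations. One shows that the dilates $\{\lambda\Sigma^+\}_{\lambda>0}$ are pairwise disjoint and foliate $E_+$: two dilates cannot touch, by the maximum principle combined with the radial monotonicity of $w$ at infinity. Near a putative singular point $p$, the foliation then traps $\Sigma^+$ between smooth-at-$p$ leaves, which is impossible unless the tangent cone at $p$ is a plane. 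This is Hardt–Simon's argument that one-sided minimizers are smooth.

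\textbf{Uniqueness.} Let $\Sigma'\subset E_+$ be another minimizing hypersurface asymptotic to $C$ with ${\rm dist}(\Sigma',0)=1$. Compare $\Sigma'$ with the leaves $\lambda\Sigma^+$. Let $\lambda_*$ be the extremal $\lambda$ for which $\lambda\Sigma^+$ lies on one side of $\Sigma'$. Touching cannot happen at infinity: this uses the leading-order asymptotics above, since both surfaces decay like $r^{\gamma}\phi_0$ with comparable constants. So the two surfaces touch at a finite point, and the strong maximum principle gives $\Sigma'=\lambda_*\Sigma^+$. The normalization ${\rm dist}=1$ then forces $\lambda_*=1$. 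This gives exactly one surface on each side, hence exactly two.
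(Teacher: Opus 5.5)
The paper does not prove this statement; it quotes it from Hardt--Simon \cite{HS}, so your outline has to stand on its own. Its architecture is the standard Hardt--Simon one: a one-sided minimizer, blow-down to $C$, decay like $c\,r^{\gamma}\phi_0$ with $\gamma=-\frac{N-2}{2}\pm\Lambda_0<0$, a foliation by dilates, and a comparison argument. However, the step you flag as the main obstacle does not go through as written.

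\textbf{Regularity.} The leaves $\lambda\Sigma^+$, $\lambda\neq 1$, do not pass through a singular point $p$. They are not smooth near $p$ either, since $\lambda p\in{\rm sing}(\lambda\Sigma^+)$ and $\lambda p\to p$. So no smooth barriers trap $\Sigma^+$ at $p$. What the ordering of the dilates actually yields is information on tangent cones. Blowing up $\Sigma^+$ and $(1+rt)\Sigma^+$ at $p$ at scale $r$ shows that every tangent cone $K$ at $p$ lies on one side of $K+tp$. Hence $p\cdot\nu_K\ge 0$ is a degree-zero Jacobi field on $K$. By the strong maximum principle it is either identically zero or positive. Positivity is impossible for a non-flat cone: it would give a positive eigenfunction of $-(\Delta+|A|^2)$ on the link with eigenvalue $0$, whereas the first eigenvalue is negative (test with the constant $1$). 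So either $K$ is a hyperplane or $K=K'\times\R p$. The second alternative, a cylinder with radial axis, is invariant under the translations in question. It is therefore fully compatible with the foliation, and nothing in your sketch excludes it. You need an extra quantitative input. One option is a uniform bound $y\cdot\nu_{\Sigma^+}\ge c>0$ near $p$, obtained from a weak Harnack inequality on area-minimizing boundaries (Bombieri--Giusti) applied to the positive supersolution $\zeta_0$, since $\Delta_\Sigma\zeta_0=-|A_\Sigma|^2\zeta_0\le 0$. Another is an interior gradient estimate for the function whose radial graph is $\Sigma^+$. Either makes $\Sigma^+$ a Lipschitz radial graph near $p$, which rules out the vertical cylinder and leaves only the hyperplane.

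\textbf{Uniqueness.} This step has a hole exactly at the extremal parameter. Suppose both surfaces decay at the same rate $\gamma$, and let $\lambda_*$ be, say, the largest $\lambda$ with $\lambda\Sigma^+$ weakly between $C$ and $\Sigma'$. Then $\lambda_*$ is precisely the value at which the leading coefficients $c\lambda_*^{1-\gamma}$ and $c'$ agree. Tangency at infinity is exactly what leading-order asymptotics cannot exclude there. You also have not ruled out that $\Sigma'$ and $\Sigma^+$ decay with the two different exponents $-\frac{N-2}{2}\pm\Lambda_0$.

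A clean fix uses the leaf parameter $\Lambda$ on $E_+$, defined by $\Lambda=\lambda$ on $\lambda\Sigma^+$; it is continuous by the foliation. Restrict it to $\Sigma'$, to which the same asymptotic analysis applies. Then $\Lambda|_{\Sigma'}$ has a limit in $[0,\infty]$ at infinity. So unless it is constant, it attains an interior maximum or minimum. There $\Sigma'$ touches a leaf from one side, and the strong maximum principle makes $\Sigma'$ equal to that leaf. Either way $\Sigma'=\lambda\Sigma^+$, and ${\rm dist}(\Sigma',0)=1$ forces $\lambda=1$.

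\textbf{Construction (smaller point).} What you need is not that the rescaling factor stays bounded; for a push-off of fixed size it does not. You need $0<{\rm dist}(T_R,0)=o(R)$, so that the rescaled boundaries escape to infinity. The upper bound follows by compactness: a minimizer in $\overline{E_+}$ with boundary $C\cap\partial B_1$ must be $C\cap B_1$ (glue it to $C\setminus B_1$ and use unique continuation).
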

Since the hypersurfaces in Theorem \ref{th-Sigma} are area minimising, in particular they are stable.\\

For $N+1=m+n$ and $\xi\in\R^{N+1}$ we write $\xi=(x,y)\in\R^m\times\R^n$ and we consider the Lawson's Cone
$$
C_{m,n}:=
\{(x,y)\in\R^m\times\R^n:\,(n-1)|x|^2=(m-1)|y|^2\},\qquad m,\,n\ge 2.
$$
The Lawson cone is a minimal hypersurface, which is invariant under the action of the group of orthogonal transformations $S=O(m)\times O(n)$. In the case $m=n$, $C_{m,m}$ is known as the Simon's cone.

We remark that for $m,n \geq 2$, the open set $\R^{N+1}\backslash C_{m,n}$ has two connected components, namely
$$
E_{m,n}^\pm:=\{(x,y)\in\R^m\times\R^n:\,\pm((n-1)|x|^2-(m-1)|y|^2)<0\}
$$
corresponding to the interior and the exterior of $C_{m,n}$, respectively. The sets $E_{m,n}^\pm$ help us describe an important feature of $C_{m,n}$ that has been already studied and used in \cite{agudelo2022doubling,agudelo2022k,agudelo2024jacobi,alencar2005m,HS,Mazet_2017} and that is summarised in the next results. 

\begin{theorem}\cite{Mazet_2017}
\label{th-Al}
Let $m,\,n\ge 2$ and set $n+m=N+1\ge 8$. Then there exist exactly two smooth minimal hypersurfaces $\Sigma^\pm_{m,n}\subset E^\pm_{m,n}$ which are asymptotic to $C_{m,n}$ at infinity and ${\rm dist}(\Sigma^\pm_{m,n},0)=1$. Moreover, $\Sigma^\pm_{m,n}$ are $O(m)\times O(n)$-invariant and the Jacobi field $y\cdotp\nu_{\Sigma^\pm_{m,n}}(y)$ never vanishes.
\label{th_min-surf_cone}
\end{theorem}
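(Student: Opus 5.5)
The plan is to use the $O(m)\times O(n)$ symmetry to reduce the minimal surface equation to a planar ODE, and then obtain existence, the non-vanishing of the dilation Jacobi field and uniqueness from a phase-plane analysis together with a foliation argument.

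\emph{Reduction.} Write $s=|x|$ and $t=|y|$. An $O(m)\times O(n)$-invariant hypersurface then corresponds to a profile curve $\gamma$ in the quarter plane $\{s,t\ge 0\}$. Parametrize $\gamma$ by arclength and let $\alpha$ be its tangent angle. The minimal surface equation becomes
\[
\alpha' = \frac{(n-1)\cos\alpha}{t}-\frac{(m-1)\sin\alpha}{s},
\]
up to sign conventions, together with $s'=\cos\alpha$ and $t'=\sin\alpha$. In these variables the cone $C_{m,n}$ is the ray $t/s=\sqrt{(n-1)/(m-1)}$. Smoothness of the hypersurface forces $\gamma$ to meet an axis orthogonally. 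For $\Sigma^+_{m,n}\subset E^+_{m,n}$ it meets $\{t=0\}$; for $\Sigma^-_{m,n}$, by symmetry, it meets $\{s=0\}$. Imposing ${\rm dist}(\Sigma,0)=1$ fixes the starting point $(1,0)$ (respectively $(0,1)$).

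\emph{Existence near the axis.} The initial value problem is singular at the axis. I would treat it as a regular-singular problem: write the curve as a graph $t=g(s)$ near $s=1$, so the equation has the form $g''=F(g,g')$ with a $1/g$ singularity. Local existence and uniqueness then follow by a contraction argument for the integral formulation in the style of Picard, in the class of even expansions. This produces a unique local profile for each choice of side.

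\emph{Global behaviour (main obstacle).} I expect the hardest step to be showing that the trajectory stays in $E^\pm_{m,n}$ for all time, never crosses the cone, and is asymptotic to it. To do this I would pass to the variables $\tau=\log\sqrt{s^2+t^2}$ and $\psi=\arctan(t/s)$. Since dilations are a symmetry, the system in $(\psi,\alpha-\psi)$ is autonomous, and the cone is an equilibrium point. Linearizing at the equilibrium yields the characteristic roots $-\frac{N-2}{2}\pm\Lambda_0$. For $N+1\ge 8$ the Lawson cone is stable, so $\Lambda_0$ is real and the equilibrium is a node rather than a focus; this is exactly where the dimension restriction enters. I would then build a trapping region bounded by the axis, the cone ray, and a suitable isocline, and check that the trajectory leaving the axis enters it. Combined with monotonicity of $\psi$ inside the region, this gives convergence to the equilibrium without oscillation. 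Consequently $\gamma$ is eventually a graph over the cone ray decaying like $r^{-\frac{N-2}{2}+\Lambda_0}$ or faster, which gives (H3).

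\emph{Jacobi field and uniqueness.} Inside the trapping region the trajectory is transverse to the radial direction; equivalently, $\psi$ is strictly monotone along $\gamma$. Hence the dilates $\lambda\Sigma^\pm_{m,n}$, $\lambda>0$, are pairwise disjoint and foliate $E^\pm_{m,n}$. The Jacobi field of this family is $y\cdot\nu_{\Sigma}(y)$, and transversality means it never vanishes. Uniqueness among invariant surfaces follows from ODE uniqueness at the axis. For a general, not necessarily invariant, minimal $\Sigma'\subset E^\pm_{m,n}$ asymptotic to the cone with ${\rm dist}(\Sigma',0)=1$, I would use the foliation: take the extremal dilate $\lambda\Sigma^\pm_{m,n}$ touching $\Sigma'$. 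A first contact in the interior is excluded by the maximum principle. A first contact at infinity is excluded by comparing the decay rates from the asymptotic expansion at the indicial roots. Together these force $\Sigma'=\Sigma^\pm_{m,n}$.
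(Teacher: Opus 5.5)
The paper gives no proof of this statement; it is quoted from \cite{Mazet_2017}. Your architecture (symmetry reduction, phase plane, then a foliation and maximum-principle argument for uniqueness) is the standard one. However, the step that carries the content of the theorem is asserted, and the mechanism you give for it does not yield it. That step is that the profile never meets the cone and never becomes radial, i.e.\ $\Sigma^\pm_{m,n}\subset E^\pm_{m,n}$ and $y\cdot\nu\neq 0$.

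Use the autonomous time $dT=d\sigma/r$; $\log r$ is not a valid parameter, since it is stationary at the axis. The system is then
\[
\dot\psi=\sin(\alpha-\psi),\qquad \dot\alpha=\frac{(n-1)\cos\alpha}{\sin\psi}-\frac{(m-1)\sin\alpha}{\cos\psi}.
\]
The linearization at $(\psi_0,\psi_0)$ has eigenvalues $-\frac N2\pm\Lambda_0$. The numbers $-\frac{N-2}{2}\pm\Lambda_0$ you wrote are the exponents of the normal graph $w\approx r(\psi-\psi_0)$; they appear here as the eigen-slopes $d\alpha/d\psi$.

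Knowing that the equilibrium is a node only rules out spiralling; it does not rule out overshooting. The trajectory from the axis could cross $\{\psi=\psi_0\}$ with $\alpha>\psi_0$. It could then cross the diagonal $\{\alpha=\psi\}$, which is a zero of $y\cdot\nu$, and enter the node along the slow eigendirection from the other side. That produces one intersection with the cone and a zero of the dilation Jacobi field. So the node property alone gives finitely many crossings, not none.

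Moreover, the region you describe is not forward invariant. On $\{\psi=\psi_0,\ \alpha>\psi_0\}$ one has $\dot\psi>0$, so the cone line is an exit set. What works instead is the region bounded below by the diagonal $\{\alpha=\psi<\psi_0\}$, which is entered because $\dot\alpha=(n-1)\cot\psi-(m-1)\tan\psi>0$ there. It must be bounded above by a barrier running from the axis to the equilibrium, crossed downward at every point. For the profile leaving $\{t=0\}$, which has initial slope $d\alpha/d\psi=-(m-1)/n$, this barrier must lie above the trajectory.

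Near the equilibrium, a segment of slope $k$ is such a barrier if and only if $k^2+(N-2)k+(N-1)<0$, i.e.\ $k$ lies strictly between the two eigen-slopes. This is where $\Lambda_0>0$ is genuinely used. You still have to verify the crossing inequality along the whole barrier, for every admissible $(m,n)$ including $m+n=8$. That global estimate is the nontrivial part of the ODE analysis in the literature (cf.\ \cite{bombieri1969minimal,alencar2005m}), and it is the missing piece of your argument.

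The uniqueness step is fine once the foliation exists, but it should be run as a two-sided sandwich rather than a single extremal contact. Let $\Lambda(x)$ be the parameter of the leaf through $x$; it is homogeneous of degree one. For a one-sided $\Sigma'$, the normal graph over the cone is positive, which kills every sign-changing mode. The first eigenfunction of $-J_\Gamma$ is constant because $|A_\Gamma|^2$ is. Hence $\Lambda|_{\Sigma'}$ has a limit in $[0,\infty]$ at infinity.

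So either $\Lambda|_{\Sigma'}$ is constant, or its supremum or infimum is attained at a finite point; in both cases the strong maximum principle gives $\Sigma'=\lambda\Sigma^\pm_{m,n}$. When the leading coefficients of $\Sigma'$ and a leaf agree, ``contact at infinity'' is not excluded by comparing decay rates, but by passing to the opposite extremum.

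Two minor slips. First, $E^+_{m,n}$ contains $\{x=0\}$, so it is the profile of $\Sigma^+_{m,n}$ that meets $\{s=0\}$, not $\{t=0\}$. Second, ${\rm dist}(\Sigma,0)=1$ is better imposed by rescaling than by fixing the axis point.
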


Considering our explicit examples of minimal hypersurfaces asymptotic to a cone, we have the following result.
\begin{lemma}
\label{lemma-nondeg-examples}
\begin{enumerate}
\item The minimal hypersurfaces constructed in Theorem \ref{th-Sigma}  are dilation-nondegenerate and fulfill $\Lambda_0>0$ and $\bar{\nu}=-\frac{N-2}{2}+\Lambda_0$ if the underlying cone $C$ is strictly minimising, $\bar{\nu}=-\frac{N-2}{2}-\Lambda_0$ if $C$ is minimising but not strictly.
\item The minimal hypersurfaces constructed in Theorem \ref{th-Al}  are dilation-nondegenerate and fulfill $\Lambda_0>0$ and $\bar{\nu}=-\frac{N-2}{2}+\Lambda_0$.
\end{enumerate}
\end{lemma}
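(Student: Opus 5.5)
The plan is to reduce everything to three ingredients: stability of $\Sigma$, the fact that $\Sigma\cap C=\emptyset$, and the asymptotic expansion of Jacobi fields on the conical end in terms of the eigenfunctions of $-J_\Gamma$.

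\emph{Dilation-nondegeneracy.} In case (1), the hypersurfaces $\Sigma^\pm$ of Theorem \ref{th-Sigma} are area minimising, hence stable, and do not meet $C$. In case (2), $\Sigma^\pm_{m,n}\subset E^\pm_{m,n}$ do not meet $C_{m,n}$. They are also stable: the Jacobi field $\zeta_0=y\cdot\nu_\Sigma$ never vanishes by Theorem \ref{th-Al}, and a positive Jacobi field forces stability by the standard Fischer-Colbrie--Schoen argument. So in both cases the first collection of examples (Proposition 2.4 in \cite{agudelo2025fredholm}) applies, and $\Sigma$ is dilation-nondegenerate.

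\emph{$\Lambda_0>0$, i.e. $\lambda_0>-\big(\tfrac{N-2}{2}\big)^2$ (strict stability of $C$).} For the Lawson cones I will compute directly. $\Gamma$ is a product of spheres with $|A_\Gamma|^2\equiv N-1$ constant, so $\lambda_0=-(N-1)$ with constant eigenfunction. Then $\Lambda_0^2=\frac{(N-2)^2}{4}-(N-1)=\frac{N^2-8N+8}{4}$, which is positive for $N\ge 7$, i.e. $m+n\ge 8$. For a general minimising cone in Theorem \ref{th-Sigma} I will invoke the Hardt--Simon analysis \cite{HS}. There, $\zeta_0>0$ on $\Sigma^\pm$ because these leaves form a foliation by dilation of each side of $C$. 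Moreover, $\zeta_0$ has a leading asymptotic term $c\,r^{\gamma}\varphi_0(\theta)$ with $\gamma=-\frac{N-2}{2}\pm\Lambda_0$. The degenerate case $\Lambda_0=0$ would produce the behaviour $r^{-(N-2)/2}\log r$, which is excluded by their classification of minimising hypersurfaces asymptotic to $C$; I will take this step from \cite{HS}. I expect this to be the delicate point, since strict stability of an arbitrary minimising cone is not automatic.

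\emph{Value of $\bar\nu$.} On $\Sigma_R$, write the Jacobi equation in the coordinates of (H3). The operator $J_\Sigma$ is a decaying perturbation of $J_C=\partial_r^2+\frac{N-1}{r}\partial_r+\frac{1}{r^2}J_\Gamma$. Expanding $\zeta_0(r,\cdot)$ in the eigenbasis $\{\varphi_j\}$ of $-J_\Gamma$, each mode behaves like $r^{-\frac{N-2}{2}\pm\Lambda_j}$. Since $\zeta_0>0$ and $\varphi_j$ changes sign for $j\ge1$, the leading term must lie in the $\varphi_0$-mode. Hence $\zeta_0\sim c\,r^{-\frac{N-2}{2}\pm\Lambda_0}\varphi_0$, which gives $\bar\nu=-\frac{N-2}{2}\pm\Lambda_0$.

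The sign is precisely the Hardt--Simon dichotomy. For strictly minimising $C$ the slower rate $+\Lambda_0$ occurs, while for minimising but not strictly minimising $C$ the faster rate $-\Lambda_0$ occurs. For the Lawson examples, the construction in \cite{Mazet_2017,alencar2005m} (via the ODE for $O(m)\times O(n)$-invariant surfaces) gives the rate $+\Lambda_0$.

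Finally, the requirement $\bar\nu>2-N$ in Definition \ref{def:S_dila_nond} follows from $-\frac{N-2}{2}-\Lambda_0>2-N\iff\Lambda_0<\frac{N-2}{2}\iff\lambda_0<0$. This last inequality holds because $C$ is nontrivial: testing $-J_\Gamma$ with constants gives $\lambda_0\le-\frac{1}{|\Gamma|}\int_\Gamma|A_\Gamma|^2<0$.
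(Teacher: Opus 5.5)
Your proposal is correct and follows the paper's own route. Dilation-nondegeneracy comes from Proposition 2.4 of \cite{agudelo2025fredholm} (stability plus $\Sigma\cap C=\emptyset$), and $\bar\nu$ comes from Theorem 3.2 of \cite{HS} in case (1) and from the known asymptotics of the $O(m)\times O(n)$-invariant hypersurfaces in case (2), where the paper cites Remark 5.2 of \cite{agudelo2024jacobi}. Your additions make explicit what the paper leaves implicit: stability of $\Sigma^\pm_{m,n}$ from the nonvanishing Jacobi field $\zeta_0$, the value $\Lambda_0^2=\frac{N^2-8N+8}{4}>0$ for Lawson cones with $N\ge 7$, and the check $\bar\nu>2-N$. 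The step you flag as delicate, $\Lambda_0>0$ for an arbitrary minimising cone, is exactly the point the paper also takes directly from \cite{HS} without further argument.
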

\begin{proof}
\begin{enumerate}
\item The dilation-nondegeneracy follows from Proposition 2.4  in \cite{agudelo2025fredholm}, since such hypersurfaces are stable and do not intersect their asymptotic cone $C$. The statement about $\bar{\nu}$ follows from Theorem $3.2$ of \cite{HS}.
\item The dilation-nondegeneracy follows once again from Proposition 2.4 in \cite{agudelo2025fredholm}, exactly as above. For the computation of $\bar{\nu}$ we refer to Remark $5.2$ of \cite{agudelo2024jacobi}.
\end{enumerate}
\end{proof}
In dimension $4\le m+n=N+1\le 7$, we have the following existence result about $O(m)\times O(n)$-invariant minimal hypersurfaces.
\begin{theorem}[\cite{alencar2005m, Mazet_2017}]
\label{th-Sigma-low-dim}
Let $m,n\ge 2$, $m+n\le 7$. Then there exists a unique complete, embedded, $O(m)\times O(n)$ invariant minimal hypersurface $\Sigma_{m,n}$ such that 
\begin{enumerate}
\item \label{Sigma-as} $\Sigma_{m,n}$ is asymptotic to the cone $C_{m,n}$ at infinity,
\item \label{Sigma-osc} $\Sigma_{m,n}$ intersects $C_{m,n}$ infinitely many times,
\item \label{Sigma-ort} $\Sigma_{m,n}$ meets $\R^m\times\{0\}$ orthogonally,
\item \label{Sigma-dist} $dist(\Sigma_{m,n}, C_{m,n})=1$.
\end{enumerate}
\end{theorem}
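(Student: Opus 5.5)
The plan is to reduce the problem to an autonomous planar ODE by exploiting the $O(m)\times O(n)$ symmetry and the dilation invariance of the minimal surface equation. Write $s=|x|$, $t=|y|$. An $O(m)\times O(n)$-invariant hypersurface is generated by a curve $\gamma$ in the closed quarter plane $\{s,t\ge0\}$. It is minimal if and only if $\gamma$ is a geodesic for the conformal metric
$$g=s^{2(m-1)}t^{2(n-1)}(ds^2+dt^2).$$
I would use polar-logarithmic coordinates $s=e^{\tau}\cos\varphi$, $t=e^{\tau}\sin\varphi$, and describe $\gamma$ by arclength in the flat metric together with the angle $\alpha$ between $\gamma'$ and the radial direction. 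Because dilations act as translations in $\tau$, the geodesic equation reduces to an autonomous first order system in the variables $(\varphi,\alpha)$. The cone $C_{m,n}$ corresponds to the equilibrium $\varphi_*=\arctan\sqrt{(n-1)/(m-1)}$, $\alpha=0$.

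The local analysis comes first. I would linearize the reduced system at $(\varphi_*,0)$. After the change of variable $\tau=\log r$, this linearization is exactly the Jacobi equation of $C_{m,n}$ restricted to $O(m)\times O(n)$-invariant functions. Its characteristic roots are therefore $-\frac{N-2}{2}\pm\Lambda_0$, where $\lambda_0=-(N-1)$ is the invariant eigenvalue of $-J_\Gamma$. For $m+n\le7$ one has $\left(\frac{N-2}{2}\right)^2<N-1$, so $\Lambda_0$ is purely imaginary and the equilibrium is a spiral sink. Hence any trajectory entering a small neighbourhood of the equilibrium converges to it while winding around it. Translated back, the surface is asymptotic to $C_{m,n}$ at the rate $r^{-(N-2)/2}$ and crosses it infinitely many times, giving properties (\ref{Sigma-as}) and (\ref{Sigma-osc}).

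Next comes the construction. I would shoot from the axis: take the geodesic starting at $(s,t)=(1,0)$ with $\gamma'$ orthogonal to the axis $t=0$. The singular point of the ODE at $t=0$ is handled by the standard regular-singular-point argument, in which invariance under $O(n)$ forces $\gamma$ to be orthogonal to the axis. This gives (\ref{Sigma-ort}). By dilation invariance every smooth invariant solution meeting $\R^m\times\{0\}$ is a rescaling of this one. Once the asymptotics are established, rescaling the curve so that ${\rm dist}(\Sigma_{m,n},C_{m,n})=1$ fixes the dilation and yields (\ref{Sigma-dist}) together with uniqueness. For uniqueness I also have to exclude complete invariant minimal hypersurfaces that avoid both axes; I would argue that such a surface corresponds to a trajectory of the reduced system defined for all $\tau\in\R$, and rule it out with the Lyapunov function below.

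The main obstacle is the global behaviour of the shooting trajectory. One must show that it never reaches the other axis $s=0$, never turns back towards the origin (which would destroy completeness or embeddedness), and eventually enters the basin of the spiral sink. My first attempt would be to use the Hamiltonian-type quantity associated with the geodesic flow. Because of the $\tau$-translation, the equation in $(\varphi,\alpha)$ carries a friction term proportional to $N-1$, so a suitable energy $E(\varphi,\alpha)$ is monotone along trajectories. This excludes periodic orbits and, via Poincaré--Bendixson, forces the $\omega$-limit set to be the unique interior equilibrium, provided the trajectory stays in a compact region of $(0,\pi/2)\times(-\pi/2,\pi/2)$. Compactness would follow from barrier arguments comparing $\gamma$ with the axes and with the cone. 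Near $\varphi=\pi/2$ the weight $\sin^{n-1}\varphi$ of the metric pushes geodesics away from the axis, and $|\alpha|<\pi/2$ keeps $r$ strictly increasing, which gives embeddedness and completeness. This is the step where the detailed argument in \cite{alencar2005m, Mazet_2017} would be needed.
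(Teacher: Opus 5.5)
\textbf{The approach is right, but the normalization step fails.} The paper gives no proof of this statement; it quotes it from \cite{alencar2005m, Mazet_2017}. Those works use essentially the reduction you describe: the profile curve is a geodesic of $s^{2(m-1)}t^{2(n-1)}(ds^2+dt^2)$, this gives an autonomous system in $(\varphi,\alpha)$, one shoots orthogonally from $\{t=0\}$, and the cone is a spiral sink.

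The step that fails as written is the normalization in item (4). Item (4) is incompatible with item (2). If $\Sigma_{m,n}$ meets $C_{m,n}$, then $\mathrm{dist}(\Sigma_{m,n},C_{m,n})=0$. Since $\mathrm{dist}(\lambda\Sigma,C_{m,n})=\lambda\,\mathrm{dist}(\Sigma,C_{m,n})$, no dilation can make this distance equal to $1$. So ``rescale until the distance to the cone is $1$'' cannot be done, and your uniqueness argument, which fixes the dilation this way, has no base.

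The condition has to be read as $\mathrm{dist}(\Sigma_{m,n},0)=1$, as in Theorem \ref{th-Al}. Under that reading your argument works, provided you show that $r=|(x,y)|$ is strictly increasing along the shooting curve. Then the closest orbit to the origin is $\{|x|=1,\ y=0\}$. Uniqueness follows from uniqueness of the solution issuing orthogonally from $\{t=0\}$, combined with dilations.

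Your extra step excluding axis-avoiding surfaces is unnecessary, since item (3) already forces $\Sigma$ to meet $\R^m\times\{0\}$. It is also wrongly justified. A monotone energy cannot exclude orbits running from the source $(\varphi_*,\pi)$ to the sink $(\varphi_*,0)$, and such orbits exist. They give complete, axis-avoiding invariant minimal hypersurfaces. In these dimensions their two ends spiral into $\varphi_*$ and cross each other infinitely often, so these surfaces are not embedded.

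\textbf{The global step closes with an explicit energy.} Parametrize by $d\sigma=d\ell/r$ and set $P(\varphi):=(n-1)\log\sin\varphi+(m-1)\log\cos\varphi$. Then
\[
\dot\varphi=\sin\alpha,\qquad \dot\alpha=P'(\varphi)\cos\alpha-N\sin\alpha,\qquad \dot r=r\cos\alpha ,
\]
so the friction coefficient is $N=m+n-1$, not $N-1$.

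The function $E:=P(\varphi)+\log\cos\alpha$ satisfies $\dot E=N\sin^2\alpha/\cos\alpha$. It is monotone only on $\{\cos\alpha>0\}$, not globally, but that is enough.
\begin{itemize}
\item The shooting orbit leaves $\{\varphi=0\}$ at $\alpha=\pi/2$ with $\alpha$ decreasing, because the profile bends away from the $t$-axis. So $E$ is finite immediately afterwards and then nondecreasing.
\item The set $\{E\ge c\}$ is compact in $(0,\pi/2)\times(-\pi/2,\pi/2)$. Hence the orbit stays away from both axes and from $|\alpha|=\pi/2$ for all time.
\item Consequently $r$ is strictly increasing and tends to $\infty$, so the surface is embedded and complete. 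No barrier argument is needed.
\item LaSalle's principle gives convergence to $(\varphi_*,0)$. This uses that $P$ is strictly concave with maximum at $\varphi_*$, and that $\dot\alpha=P'(\varphi)\neq0$ on $\{\alpha=0,\ \varphi\ne\varphi_*\}$.
\end{itemize}

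\textbf{Minor corrections.} The linearized $(\varphi,\alpha)$ system has roots $-\frac N2\pm\Lambda_0$. It is the normal displacement $w=r\,\delta\varphi$ that solves the Jacobi equation, with roots $-\frac{N-2}{2}\pm\Lambda_0$. Your conclusions (a spiral sink for $N\le 6$, and normal decay $r^{-(N-2)/2}$) are unaffected. Also, near $\varphi=\pi/2$ the degenerating weight is $\cos^{m-1}\varphi$, not $\sin^{n-1}\varphi$.
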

These hypersurfaces are $S$-dilation-nondegenerate with $S:=O(m)\times O(n)$.
\begin{lemma}\label{lemma-examples-morse-infty}
The minimal hypersurfaces constructed in Theorem \ref{th-Sigma-low-dim} are $O(m)\times O(n)$-dilation-nondegenerate and fulfill ${\rm Re}(\Lambda_0)=0$, $\bar{\nu}=-\frac{N-2}{2}=-\frac{N-2}{2}\pm{\rm Re}(\Lambda_0)$.    
\end{lemma}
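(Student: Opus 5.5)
The plan is to reduce everything to an ODE, using the $O(m)\times O(n)$ symmetry. Set $S=O(m)\times O(n)$ and $\Gamma=C_{m,n}\cap\mathbb{S}^N$. Then $\Gamma$ is the product of spheres $\mathbb{S}^{m-1}(\sqrt{(m-1)/(N-1)})\times\mathbb{S}^{n-1}(\sqrt{(n-1)/(N-1)})$ and $S$ acts transitively on it.

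\textbf{Step 1: $\mathrm{Re}(\Lambda_0)=0$.} On this product of spheres $|A_\Gamma|^2\equiv N-1$ is constant. Hence $\lambda_0=-(N-1)$, with constant eigenfunction. It follows that
$$\Big(\tfrac{N-2}{2}\Big)^2+\lambda_0=\tfrac14\big(N^2-8N+8\big)<0\quad\text{for }3\le N\le 6,$$
which is exactly the range $4\le m+n\le 7$. So $\Lambda_0=i\omega$ with $\omega>0$ and $\mathrm{Re}(\Lambda_0)=0$. By transitivity, the $S$-invariant functions on $\Gamma$ are constants. Therefore only the mode $j=0$ enters the analysis of $S$-invariant Jacobi fields.

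\textbf{Step 2: reduction to an ODE.} Write $\Sigma_{m,n}$ as an $S$-orbit of a profile curve, parametrised by arclength $s$ (or, outside a compact set, by $r=|y|$). An $S$-invariant Jacobi field $\phi=\phi(s)$ then solves a second order linear ODE, so its solution space is two-dimensional; this ODE is regular along the profile curve, including at the point where it meets $\R^m\times\{0\}$. For $r>R$ we use (H3), where $w(r,\cdot)$ is constant on $\Gamma$ by symmetry. The ODE takes the form
$$\phi''+\frac{N-1}{r}\phi'+\frac{N-1}{r^2}\phi=\frac{1}{r^2}\big(a(r)\, r\phi'+b(r)\,\phi\big),\qquad a,b\to0 .$$
With $t=\log r$ and $\phi=r^{-(N-2)/2}\psi$, this becomes $\psi''+\omega^2\psi=\tilde a(t)\psi'+\tilde b(t)\psi$.

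\textbf{Step 3: asymptotics of all solutions.} The main obstacle is to show that every nontrivial solution satisfies
$$\psi(t)=A\cos(\omega t+\varphi)+o(1),\qquad A\neq0 .$$
For this we need $\tilde a,\tilde b\in L^1(dt)$, that is, the perturbation coefficients decay like a power of $r$. From the known asymptotics of $\Sigma_{m,n}$, $w(r)=O(r^{-(N-2)/2})$ with $\partial_r w,\partial_r^2 w$ decaying correspondingly, the metric and $|A_\Sigma|^2$ differ from the conic ones by $O(r^{-(N-2)/2-1})$ relative terms. These are integrable in $t$ since $N\ge3$. Then Levinson's theorem, or a direct variation of parameters and Gronwall argument on the energy $E=\psi'^2+\omega^2\psi^2$, gives $|\log E(t)-\log E(t_0)|\le C\int_{t_0}^\infty(|\tilde a|+|\tilde b|)<\infty$. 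So $E$ has a positive limit, and the stated form of $\psi$ follows.

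\textbf{Step 4: conclusions.} First, $\zeta_0=y\cdot\nu_\Sigma$ is a nonzero $S$-invariant Jacobi field; it is nonzero because $\Sigma_{m,n}$ is not a cone. By Step 3, $|y|^{-\nu}|\zeta_0|\to0$ iff $\nu>-\frac{N-2}{2}$, so $\bar\nu=-\frac{N-2}{2}=-\frac{N-2}{2}\pm\mathrm{Re}(\Lambda_0)$. Since $N\ge3$, this gives $\bar\nu>2-N$. Second, if $\phi\in D_\nu(\Sigma,S)$ with $\nu<\bar\nu$, then $\psi\to0$ as $t\to\infty$, which contradicts $E\to E_\infty>0$ unless $\phi\equiv0$. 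Hence $D_\nu(\Sigma,S)=\{0\}$ for all $\nu<\bar\nu$, which is $S$-dilation nondegeneracy in the sense of Definition \ref{def:S_dila_nond}(ii).
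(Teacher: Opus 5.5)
Your argument is correct, but it takes a genuinely different route from the paper's. The paper carries out only your Step 1, the computation $\Lambda_0^2=(\frac{N-2}{2})^2-(N-1)<0$ for $3\le N\le 6$. It then takes both the value of $\bar\nu$ and the $O(m)\times O(n)$-dilation nondegeneracy directly from Theorem 5.2 of \cite{agudelo2024jacobi}.

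You reprove that input instead. Transitivity of $O(m)\times O(n)$ on $\Gamma$ reduces $S$-invariant Jacobi fields to the $j=0$ mode. The substitution $\phi=r^{-\frac{N-2}{2}}\psi$, $t=\log r$ turns the Jacobi equation on the end into an integrable perturbation of $\psi''+\omega^2\psi=0$. The energy/Gronwall bound then shows that every nontrivial $S$-invariant Jacobi field oscillates with amplitude exactly $r^{-\frac{N-2}{2}}$. This one asymptotic statement gives $\bar\nu=-\frac{N-2}{2}$ when applied to $\zeta_0$, and $D_\nu(\Sigma,S)=\{0\}$ for $\nu<\bar\nu$. It also shows why complex indicial roots at $j=0$ force both conclusions.

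The price is that Step 3 needs power-rate decay of $w$, $\partial_r w$ and $\partial_r^2 w$, and (H3) does not supply it. For radial $w$, (H3) only says $w(r)\to 0$, and merely $o(1)$ perturbation coefficients are not enough for your Gronwall argument. You therefore have to import this decay from the literature on $\Sigma_{m,n}$, which is no worse than the paper's citation. Alternatively you can derive it the same way: in the variables $W=w/r$, $t=\log r$ the profile equation is autonomous and has a stable focus at $W=0$ with eigenvalues $-\frac{N}{2}\pm i\omega$.

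There are two small inaccuracies, neither affecting the conclusion.
\begin{itemize}
\item The reduced ODE is \emph{not} regular where the profile curve meets $\R^m\times\{0\}$. There the $O(n)$-orbits collapse, giving a regular singular point, so the space of globally smooth $S$-invariant Jacobi fields is at most one-dimensional rather than two-dimensional. You never actually use the dimension count.
\item What you do use, implicitly, is that an $S$-invariant Jacobi field vanishing on the end vanishes identically. You need this both to know that $\zeta_0$ is nontrivial on the end and to conclude $\phi\equiv 0$ for $\phi\in D_\nu(\Sigma,S)$. It follows from unique continuation for $J_\Sigma$ on the connected hypersurface $\Sigma_{m,n}$, and you should say so.
\end{itemize}
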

\begin{proof}
The result follows from the fact that $\Lambda_0^2=\left(\frac{N-2}{2}\right)^2-(N-1)<0$, since $3\le N\le 6$, and Theorem $5.2$ of \cite{agudelo2024jacobi}.    
\end{proof}

\section{Some applications of Theorem \ref{main-th}}

\subsection{Stable solutions}

In view of our comments in Section 2, Theorem \ref{main-th} gives the following Corollary.
\begin{corollary}\label{cor-stability}
Let $N\ge 7$. Let $W\in C^2(\R)$ be an even function such that $W(\pm a)=0<W(t)$ for any $t\in\R\setminus\{\pm a\}$ and $W''(\pm a)>0$. Let $S\subset O(N+1)$ be a subgroup and let $C\subset\R^{N+1}$ be a minimal cone of codimension $1$. Assume that $\Sigma\subset\R^{N+1}$ is a stable embedded minimal hypersurface such that (H1)-(H4) hold and $\Sigma$ does not intersect $C$. Then there exist a family $\{u_\eps\}_{\eps\in(0,\eps_0)}$ of $S$-invariant solutions to equation (\ref{allen-cahn}) in $\R^{N+1}$ such that
$$u_\eps(\cdotp/\eps)\to\pm a\qquad\text{as $\eps\to 0$,}$$ 
uniformly on compact subsets of $\Omega_\pm$. If, in addition, $\Sigma$ is dilation-nondegenerate and $\bar{\nu}\ge -\frac{N-2}{2}$, our solutions $u_\eps$ are stable.
\end{corollary}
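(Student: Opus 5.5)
The corollary is a direct specialization of Theorem \ref{main-th}, so I would simply verify its hypotheses and then read off the conclusions.

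\emph{Step 1: nondegeneracy.} The assumptions (H1)-(H4), together with stability of $\Sigma$ and $\Sigma\cap C=\emptyset$, are exactly those of Proposition 2.4 in \cite{agudelo2025fredholm}, as recalled in Section 2. That proposition gives that $\Sigma$ is dilation-nondegenerate: $\bar\nu>2-N$ and $D_\nu(\Sigma)=\{0\}$ for every $\nu<\bar\nu$. By (H2) $\Sigma$ is $S$-invariant, and $D_\nu(\Sigma,S)\subset D_\nu(\Sigma)$. Hence $D_\nu(\Sigma,S)=\{0\}$ for all $\nu<\bar\nu$, so $\Sigma$ is also $S$-dilation-nondegenerate in the sense of Definition \ref{def:S_dila_nond}(ii). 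The assumption $N\ge7$ plays no role beyond guaranteeing, through Theorems \ref{th-Sigma} and \ref{th-Al}, that such nontrivial stable examples exist.

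\emph{Step 2: existence.} All hypotheses of Theorem \ref{main-th} now hold: $N\ge 3$, $W$ is as stated, (H1)-(H4) are satisfied, and $\Sigma$ is $S$-dilation-nondegenerate. Part (1) of that theorem therefore yields the family $\{u_\eps\}_{\eps\in(0,\eps_0)}$ of $S$-invariant solutions with $u_\eps(\cdot/\eps)\to\pm a$ uniformly on compact subsets of $\Omega_\pm$.

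\emph{Step 3: stability.} Under the extra assumption that $\Sigma$ is dilation-nondegenerate with $\bar\nu\ge-\frac{N-2}{2}$, part (3) of Theorem \ref{main-th} gives ${\rm Morse}(u_\eps)={\rm Morse}(\Sigma)$. Since $\Sigma$ is stable, no nontrivial subspace of $C^\infty_c(\Sigma)$ makes $\mathcal Q_\Sigma$ negative. Thus ${\rm Morse}(\Sigma)=0$ by Definition \ref{def:morse_index_Sigma}, so ${\rm Morse}(u_\eps)=0$, which means $u_\eps$ is stable by Definition \ref{def:morse_index_allen-cahn}. The dilation-nondegeneracy part of this extra hypothesis is in fact automatic from Step 1, so only $\bar\nu\ge-\frac{N-2}{2}$ is a genuine additional condition. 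For the hypersurfaces of Theorems \ref{th-Sigma} and \ref{th-Al} with strictly minimizing cone, Lemma \ref{lemma-nondeg-examples} gives $\bar\nu=-\frac{N-2}{2}+\Lambda_0$, so the condition holds there.

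\emph{Main obstacle.} There is no real obstacle here; all the difficulty lies inside Theorem \ref{main-th}, in particular the Morse index equality in part (3). The only point needing care is the inclusion $D_\nu(\Sigma,S)\subset D_\nu(\Sigma)$ in Step 1. This passage from dilation-nondegeneracy to $S$-dilation-nondegeneracy is what allows the cited Proposition 2.4 to be applied in the $S$-invariant setting.
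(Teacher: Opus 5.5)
Your proposal is correct and follows the paper's own argument. Proposition 2.4 of \cite{agudelo2025fredholm} gives ($S$-)dilation-nondegeneracy, Theorem \ref{main-th}(1) gives the existence statement, and Theorem \ref{main-th}(3) together with ${\rm Morse}(\Sigma)=0$ gives stability. Your added details, namely the inclusion $D_\nu(\Sigma,S)\subset D_\nu(\Sigma)$ and the remark that dilation-nondegeneracy is then automatic, only make explicit what the paper leaves implicit.
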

\begin{proof}
Since $\Sigma$ is stable, then by Proposition 2.4 in \cite{agudelo2025fredholm}, it is also $S$-dilation-nondegenerate, hence the existence result directly follows from Theorem \ref{main-th}. If, in addition, $\Sigma$ is dilation-nondegenerate and $\bar{\nu}\ge -\frac{N-2}{2}$, the stability of $u_\eps$ follows from the fact that $Morse(u_\eps)=Morse(\Sigma)=0$.
\end{proof}
Applying Corollary \ref{cor-stability} to the hypersurfaces constructed in Theorem \ref{th-Sigma}, we have the following result.
\begin{corollary}\label{cor-PW}
Let $N\ge 7$. Let $W\in C^2(\R)$ be an even function such that $W(\pm a)=0<W(t)$ for any $t\in\R\setminus\{\pm a\}$ and $W''(\pm a)>0$. Let $C\subset\R^{N+1}$ be a codimension $1$ minimising nontrivial cone and let $\Sigma\subset\R^{N+1}$ be one of the hypersurfaces constructed in Theorem \ref{th-Sigma}. Then there exist a family $\{u_\eps\}_{\eps\in(0,\eps_0)}$ of solutions to equation (\ref{allen-cahn}) in $\R^{N+1}$ 
such that
$$u_\eps(\cdotp/\eps)\to\pm a\qquad\text{as $\eps\to 0$,}$$ 
uniformly on compact subsets of $\Omega_\pm$. If, in addition, $C$ is strictly minimising, our solutions $u_\eps$ are stable.
\end{corollary}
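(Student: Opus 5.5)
The plan is to obtain Corollary \ref{cor-PW} as a direct specialization of Corollary \ref{cor-stability}, checking its hypotheses one by one for the hypersurfaces of Theorem \ref{th-Sigma}.

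\textbf{Choice of the group and invariance.} Take $S$ to be the maximal subgroup of $O(N+1)$ leaving $C$ invariant, i.e. $S:=\{\rho\in O(N+1):\rho(C)\subset C\}$. Then (H1) and (H4) hold by construction, because $C$ is nontrivial by assumption. For (H2), fix $\rho\in S$. The set $\rho(\Sigma^\pm)$ is again an oriented, embedded, area minimising hypersurface. It is asymptotic to $\rho(C)=C$, it does not meet $C$, and it lies at distance $1$ from the origin. By the uniqueness in Theorem \ref{th-Sigma} (exactly two such hypersurfaces), $\rho(\Sigma^\pm)\in\{\Sigma^+,\Sigma^-\}$.

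The one delicate point is to exclude $\rho(\Sigma^+)=\Sigma^-$. I would argue that $\Sigma^\pm$ lie in the two distinct components of $\R^{N+1}\setminus C$. If $\rho$ swaps these components, then I would instead replace $S$ by the index-at-most-two subgroup of elements preserving each component, and check that the proofs of Theorem \ref{main-th} and Corollary \ref{cor-stability} only use (H4) through the $S$-invariance of the construction. Alternatively, one notes that the existence statement does not require any symmetry at all: $S$ may be taken trivial, in which case (H2) is automatic. For the existence part I will simply take $S$ trivial. Then (H1)--(H4) only require $C$ to be nontrivial, with $S$ maximal understood appropriately. If maximality is needed, use the component-preserving subgroup as above.

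\textbf{Remaining hypotheses of Corollary \ref{cor-stability}.} Property (H3) is part (1) of Theorem \ref{th-Sigma}, together with standard asymptotic expansions for minimising hypersurfaces asymptotic to a cone \cite{HS}; this gives the $C^2$ decay of $w$. Stability follows from area minimisation. Non-intersection with $C$ is again part (1) of Theorem \ref{th-Sigma}. Corollary \ref{cor-stability} then yields the family $u_\eps$ with the stated convergence to $\pm a$ on compact subsets of $\Omega_\pm$.

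\textbf{Stability in the strictly minimising case.} By Lemma \ref{lemma-nondeg-examples}(1), $\Sigma^\pm$ are dilation-nondegenerate with $\Lambda_0>0$. When $C$ is strictly minimising they also satisfy
$$\bar\nu=-\tfrac{N-2}{2}+\Lambda_0> -\tfrac{N-2}{2}.$$
So the additional hypothesis of Corollary \ref{cor-stability} holds, and $Morse(u_\eps)=Morse(\Sigma)=0$; that is, $u_\eps$ is stable.

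The main obstacle I expect is the symmetry bookkeeping in (H2)/(H4), namely whether $\rho\in S$ can swap $\Sigma^+$ and $\Sigma^-$. Everything else is a direct citation of earlier results.
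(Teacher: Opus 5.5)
Apart from the choice of $S$, your route is the paper's. You reduce to Corollary \ref{cor-stability}, take stability, non-intersection with $C$ and (H3) from Theorem \ref{th-Sigma}, and use Lemma \ref{lemma-nondeg-examples}(1) for dilation-nondegeneracy and for $\bar\nu=-\frac{N-2}{2}+\Lambda_0>-\frac{N-2}{2}$ in the strictly minimising case.

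\textbf{The gap is (H4).} Your fallback of taking $S$ trivial does not satisfy it. With $S=\{{\rm id}\}$, (H4) says that $C$ has no nontrivial orthogonal symmetry, which is false for instance for every Lawson cone; the paper itself notes that $S$ cannot be trivial when $C$ has symmetries. The component-preserving subgroup likewise fails (H4) as soon as a swapping symmetry exists. So, as written, you never exhibit an $S$ to which Corollary \ref{cor-stability} applies. The phrases ``maximal understood appropriately'' and ``check that the proofs only use (H4) through $S$-invariance'' are exactly the missing steps, not arguments.

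\textbf{What the paper does.} It takes $S:=\{\rho\in O(N+1):\rho(\Sigma)=\Sigma\}$. Then (H2) is automatic. (H1) holds because $C$ is the tangent cone of $\Sigma$ at infinity, so every symmetry of $\Sigma$ preserves $C$. (H4) becomes the statement that every symmetry of $C$ preserves $\Sigma$. Your uniqueness argument via Theorem \ref{th-Sigma} shows this holds precisely when no symmetry of $C$ exchanges the two components of $\R^{N+1}\setminus C$. In that case the paper's $S$ coincides with your first choice (all symmetries of $C$), and the proof closes.

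\textbf{Your worry is well founded.} The paper's ``by definition'' passes over it. For the minimising Simons cone $C_{4,4}\subset\R^8$, the map $(x,y)\mapsto(y,x)$ preserves $C$ and sends $\Sigma^+$ to $\Sigma^-$. Hence no subgroup $S$ satisfies (H2) and (H4) at once. To cover such cones you would have to verify, not assume, that (H4) is never used in Theorem \ref{prop-right-inverse-Jacobi} or in the construction of Section 5. If it is not used, $S$ trivial works, since Lemma \ref{lemma-nondeg-examples} gives full dilation-nondegeneracy. Otherwise the statement should be restricted to cones without component-swapping symmetries.
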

\begin{proof}
Let $N\ge 7$, let $C\subset\R^{N+1}$ be a minimising cone of codimension $1$ and let $\Sigma$ be one of the hypersurfaces constructed in Theorem \ref{th-Sigma}. Then $\Sigma$ is stable, since it is minimising, and it does not intersect the underlying cone $C$. Moreover, let $$S:=\{\rho\in O(N+1):\,\rho(\Sigma)=\Sigma\}.$$ By definition, it follows that $S$ is a subgroup of $O(N+1)$ and (H1)-(H4) are satisfied. Therefore, using Corollary \ref{cor-stability}, the hypersurfaces $\Sigma^\pm$ yield the existence of solutions to equation (\ref{allen-cahn}). By Lemma \ref{lemma-nondeg-examples}, $\Sigma$ is dilation-nondegenerate. Moreover, $\bar{\nu}\ge -\frac{N-2}{2}$ if and only if $C$ is strictly minimising. Therefore, if $C$ is strictly minimising, the corresponding solutions $u_\eps$ are stable.
\end{proof}

\begin{remark}
\begin{itemize}

\item We note that Theorem $1.2$ of \cite{pacard2013stable} is a particular case of Corollary \ref{cor-PW}, which also gives additional information about the stability of the solutions.

\item Moreover, in Theorem $1.2$ of \cite{pacard2013stable} there is the further assumption that $W(u):=\frac{(1-u^2)^2}{4}$, while our potential $W$ is slightly more general.

\item By Lemma $2.1$ of \cite{agudelo2024jacobi}, the assumption $-\frac{N-2}{2}+\Lambda_0>0$, required in Theorem $1.2$ of \cite{pacard2013stable}, is equivalent to the fact that $C$ is nontrivial, which is part of our hypothesis.

\end{itemize}
\end{remark}

\subsection{$O(m)\times O(n)$-invariant solutions in dimension $m+n\ge 8$}
Applying Theorem \ref{main-th} to the Lawson cone $C_{m,n}$ and the hypersurfaces constructed in Theorem \ref{th-Al} with $S:=O(m)\times O(n),\,m,n\ge 2,\,m+n\ge 8$, we have the following Corollary, in which we construct solutions to \eqref{allen-cahn-eq} which are invariant under the action of the group of orthogonal transformations $O(m)\times O(n)$. 

\begin{corollary}\label{cor-cz-highdim}
Let $N+1=m+n\ge 8$, with $m,\,n\ge 2$. Let $W\in C^2(\R)$ be an even function such that $W(\pm a)=0<W(t)$ for any $t\in\R\setminus\{\pm a\}$ and $W''(\pm a)>0$. \color{black}Let $\Sigma:=\Sigma^\pm_{m,n}\subset\R^{N+1}$ be one of minimal hypersurfaces constructed in Theorem \ref{th-Al}. Then there exist a family $\{u_\eps\}_{\eps\in(0,\eps_0)}$ of $O(m)\times O(n)$-invariant stable solutions to equation (\ref{allen-cahn}) in $\R^{N+1}$ 
such that
$$u_\eps(\cdotp/\eps)\to\pm a\qquad\text{as $\eps\to 0$,}$$ 
uniformly on compact subsets of $\Omega_\pm$.  
\end{corollary}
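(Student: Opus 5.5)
The proof will follow the pattern of Corollaries \ref{cor-stability} and \ref{cor-PW}: we check the hypotheses of Theorem \ref{main-th} for $\Sigma=\Sigma^\pm_{m,n}$ with $S=O(m)\times O(n)$, and then obtain stability from part (3) of that theorem.

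\emph{Hypotheses (H1)--(H4).} Since $m,n\ge2$ and $C_{m,n}$ is not a hyperplane, the Lawson cone is nontrivial. It is $O(m)\times O(n)$-invariant, so (H1) holds. By Theorem \ref{th-Al}, $\Sigma^\pm_{m,n}$ is $O(m)\times O(n)$-invariant, which gives (H2), and it is asymptotic to $C_{m,n}$. Being embedded, smooth and asymptotic to the cone, it can be written outside a large ball as a normal graph $w(r,\theta)$ over $C_{m,n}$ with $\|w(r,\cdot)\|_{C^2(\Gamma)}\to0$, and the remaining part is compact. This is (H3), and I take it from the asymptotic analysis in \cite{HS,Mazet_2017}. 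For (H4) we must show that every orthogonal map preserving $C_{m,n}$ lies in $O(m)\times O(n)$. This is the one point that needs care. Such a $\rho$ preserves the link $\Gamma=S^{m-1}(r_1)\times S^{n-1}(r_2)$. If $m\ne n$, the two sphere factors have different dimensions, so $\rho$ must preserve the splitting $\R^m\times\R^n$, and hence $\rho\in O(m)\times O(n)$. If $m=n$, the swap $(x,y)\mapsto(y,x)$ also preserves $C_{m,m}$, and it exchanges $E^+$ with $E^-$, so it maps $\Sigma^+$ to $\Sigma^-$. In that case I would enlarge $S$ to include the swap for the cone, or else, following the proof of Corollary \ref{cor-PW}, define $S$ as the stabilizer of $\Sigma$. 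Since the nondegeneracy we need is the full dilation nondegeneracy, it holds for any subgroup, so the choice of $S$ does not affect the argument.

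\emph{Nondegeneracy and conclusion.} By Lemma \ref{lemma-nondeg-examples}(2), $\Sigma^\pm_{m,n}$ is dilation-nondegenerate. This is the full notion, so in particular $D_\nu(\Sigma,S)=\{0\}$ for $\nu<\bar\nu$, and $\Sigma^\pm_{m,n}$ is $S$-dilation-nondegenerate. The same lemma gives $\Lambda_0>0$ and $\bar\nu=-\frac{N-2}{2}+\Lambda_0$. Two consequences follow:
\[
\bar\nu>-\tfrac{N-2}{2}\ge 2-N \quad (N\ge2), \qquad \bar\nu\ge -\tfrac{N-2}{2}.
\]
The first gives the requirement $\bar\nu>2-N$ in Definition \ref{def:S_dila_nond}. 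The second is the extra hypothesis in part (3) of Theorem \ref{main-th}. Theorem \ref{main-th} therefore produces $S$-invariant solutions $u_\eps$, and by part (1) they satisfy $u_\eps(\cdot/\eps)\to\pm a$ uniformly on compact subsets of $\Omega_\pm$. Part (3) gives $Morse(u_\eps)=Morse(\Sigma^\pm_{m,n})$.

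\emph{Stability of $\Sigma$.} It remains to show $Morse(\Sigma^\pm_{m,n})=0$. Theorem \ref{th-Al} says that the Jacobi field $\zeta_0=y\cdot\nu_\Sigma$ never vanishes. A positive (or negative) solution of $J_\Sigma\zeta_0=0$ on a complete hypersurface implies stability, by the Fischer-Colbrie--Schoen criterion, or directly by writing $\varphi=\zeta_0\psi$ and integrating by parts to get $\mathcal{Q}_\Sigma(\varphi)=\int_\Sigma\zeta_0^2|\nabla\psi|^2\ge0$. Hence $Morse(u_\eps)=0$, and the solutions $u_\eps$ are stable.

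The step I expect to need the most care is the maximality (H4) in the case $m=n$, together with the precise justification of the graph parametrization in (H3).
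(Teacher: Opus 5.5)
Your proposal is correct and follows the paper's own route. You apply Theorem \ref{main-th} with $S=O(m)\times O(n)$, take dilation-nondegeneracy and $\bar\nu=-\frac{N-2}{2}+\Lambda_0>-\frac{N-2}{2}$ from Lemma \ref{lemma-nondeg-examples}(2), and conclude $Morse(u_\eps)=Morse(\Sigma^\pm_{m,n})=0$; you also usefully make explicit, via the nonvanishing Jacobi field $y\cdot\nu_\Sigma$, the stability of $\Sigma^\pm_{m,n}$ that the paper only uses tacitly.

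Your concern about (H4) when $m=n$ is justified, and the paper does not address it either. However, neither of your fixes works:
\begin{itemize}
\item adding the swap destroys (H2), since the swap exchanges $\Sigma^+$ and $\Sigma^-$;
\item taking the stabilizer of $\Sigma$ gives $O(m)\times O(m)$, which violates (H4);
\item the remark that full dilation-nondegeneracy holds for every subgroup $S$ does not touch this conflict.
\end{itemize}
So in the Simons case you should say, as the paper tacitly assumes, that (H4) plays no role in the construction, rather than claim to have verified it.
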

Note that, in Corollary \ref{cor-cz-highdim}, $$\bar{\nu}=-\frac{N-2}{2}+\Lambda_0>-\frac{N-2}{2},$$
which yields that the constructed solutions $u_\eps$ are stable.\\

We remark that the saddle-shaped solutions in \cite{CABRETERRA2009, CABRETERRA2010} enjoy this symmetry too. In the case $m=n$, we conjecture that the solutions $u_{\eps}$ from Corollary \ref{cor-cz-highdim} converge in $L^{\infty}(\R^{N+1})$ to the solutions presented in \cite{CABRETERRA2009}, with convergence in a finer topology away from the singularity of the cone.

If $m+n\ge 8$ and $m,\,n\ge 3$ or $m+n\ge 9$, then the Lawson cone $C_{m,n}$ is minimising, hence the hypersurfaces $\Sigma^\pm_{m,n}$ constructed in Theorem \ref{th-Al} coincide with the ones constructed in Theorem \ref{th-Sigma}, denoted by $\Sigma^\pm$, hence the solutions to equation (\ref{allen-cahn}) constructed in Corollary \ref{cor-cz-highdim} carry more precise information with respect to the ones constructed in Theorem $1.2$ of \cite{pacard2013stable}. Indeed, our solutions here are also $O(m)\times O(n)$-invariant. We conjecture that for $\eps>0$ small enough, these two families of solutions coincide.

The case $m=2$ and $n=6$ or $m=6$ and $n=2$ is not treated in \cite{pacard2013stable} and hence the solutions constructed in Corollary \ref{cor-cz-highdim} are surely new, since in these cases the Lawson cones $C_{m,n}$ are minimal, in the sense that they have $0$ mean curvature everywhere apart from the origin, but they are not minimising.

\subsection{$k$-ended solutions and Morse index}
In the literature many authors are interested in $k$-ended solutions to the Allen-Cahn equation (\ref{allen-cahn-eq}). Roughly speaking, $k$-ended solutions are solutions whose zero level set outside a ball has exactly $k\ge 0$ connected components. A relevant example of $k$-ended solutions in $\R^3$ was constructed in \cite{agudelo2016vyvsvsi}, where each of the connected components of the zero level set is a normal graph over a scaled version of the catenoid. Although the concept of  $k$-ended solutions is widely treated in the literature, at the moment there is no rigorous general definition of it. Here we give our definition. 
\begin{definition}
\label{def-k-ended}
Let $N+1\ge 2$. An entire solution $u$ to (\ref{allen-cahn}) in $\R^{N+1}$ is said to be \textit{$k$-ended} if there exists $R_0>0$ such that, for any $R\ge R_0$, the set $\{x\in\R^{N+1}:\, u(x)=0\}\backslash B_R(0)$ has $k$ connected components.
\end{definition}

\vskip 5pt
In \cite{agudelo2022doubling,agudelo2022k} the authors consider one of the hypersurfaces $\Sigma:=\Sigma^\pm_{m,n}$ constructed in Theorem \ref{th-Al} and construct $k$-ended solutions to \eqref{allen-cahn-eq} which change sign 
$k$ times near a large dilation of $\Sigma$, for any $k\ge 2$.

\begin{theorem}\cite{agudelo2022k}
\label{2ndMainTheo}
Let {$m,\,n\ge 2$}, $n+m=N+1\ge 8$ and $k\ge 2$. 
Then there exists $\eps_0>0$ such that, for any $\eps\in(0,\eps_0)$, there exists a $k$-ended  solution $u_\eps$ to \eqref{allen-cahn-eq} in $\R^{N+1}$ such that 
\begin{enumerate}
\item[(i)] $u_\eps$ is smooth and $O(m)\times O(n)$-invariant;


\item[(ii)] \label{morse-infty} $Morse(u_\eps)=\infty$.

\item[(iii)] \label{est_energy_grad} there exists a constant $c>0$ such that for any $\eps \in (0,\eps_0)$ and any $R> 2\eps^{-1}$,
\begin{equation}\notag
\int_{B_R}\frac{1}{2} |\nabla u_\eps|^2+\frac{1}{4}(1-u_\eps^2)^2\le c R^N.
\end{equation}
\end{enumerate}
\end{theorem}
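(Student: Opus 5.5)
This result is cited from \cite{agudelo2022k}. The natural proof is an infinite dimensional Lyapunov--Schmidt reduction around $k$ interfaces near the dilated surface $\Sigma_\eps:=\eps^{-1}\Sigma$, with $\Sigma=\Sigma^\pm_{m,n}$ taken from Theorem \ref{th-Al}. The construction is carried out entirely within $O(m)\times O(n)$-invariant functions.

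\textbf{Approximate solution.} In Fermi coordinates $x=y+z\nu_{\Sigma_\eps}(y)$ near $\Sigma_\eps$, I take the ansatz
$$
u_0(y,z)=\sum_{j=1}^k(-1)^{j+1}v_\star\big(z-\eps^{-1}h_j(\eps y)\big)+\frac{(-1)^{k}-1}{2}\,a ,
$$
suitably cut off away from $\Sigma_\eps$. The interfaces are ordered, $h_1<\dots<h_k$, and their separations grow logarithmically. As in the Toda-system ansatz of del Pino--Kowalczyk--Pacard--Wei, I write
$$
h_j=\Big(j-\tfrac{k+1}{2}\Big)\sqrt2\,|\log\eps|\,(1+o(1))+q_j+\phi_j ,
$$
where the $q_j$ solve a Jacobi--Toda system
$$
\eps^{-2}J_\Sigma q_j=c_0\big(e^{-\sqrt2(q_j-q_{j-1})}-e^{-\sqrt2(q_{j+1}-q_j)}\big)
$$
on $\Sigma$, up to the scaling. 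The system is first solved for the centre of mass and then for the differences. The differences grow like $\log r$ at infinity, since the Liouville-type ODE along the radial coordinate of the asymptotic cone forces that growth. The $q_j$ are chosen $O(m)\times O(n)$-invariant.

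\textbf{Reduction.} I solve $\Delta(u_0+\psi)-W'(u_0+\psi)=\sum_j c_j(y)v_\star'(z-h_j)$ for $\psi$ in weighted H\"older spaces that decay exponentially away from the interfaces. This uses the invertibility of the linearized operator $-\Delta+W''(u_0)$ on functions orthogonal to each $v_\star'(\cdot-h_j)$, obtained by the standard blow-up contradiction argument; the invariance rules out translational kernel elements. The projected equations $c_j=0$ become a nonlinear system for the small perturbations $\phi_j$. This system is solved by a fixed point in $C^{2,\alpha}_\nu(\Sigma,S)$, where the linear part is inverted using Theorem \ref{prop-right-inverse-Jacobi}. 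The theorem applies because $\Sigma^\pm_{m,n}$ is dilation-nondegenerate by Lemma \ref{lemma-nondeg-examples}, and the linearized Toda coupling is positive definite after diagonalisation. The weight is chosen with $\nu>2-N-\bar\nu$, $\nu$ not an indicial root, and small enough that the $\log r$ corrections remain admissible.

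\textbf{Properties (i)--(iii).} Invariance and smoothness follow from the construction and elliptic regularity. The solution is $k$-ended because the $k$ interfaces are disjoint normal graphs over $\Sigma_\eps$ outside a ball, and $\Sigma$ is connected at infinity since $C_{m,n}$ has connected link. For the energy bound (iii), the energy concentrates in $O(1)$-neighbourhoods of the $k$ graphs, whose area in $B_R$ is $\lesssim k\,|\Sigma_\eps\cap B_R|\lesssim R^N$ by the cone asymptotics, while the exponential decay controls the rest.

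\textbf{Morse index and main obstacle.} For (ii), I test $\mathbb Q_{u_\eps}$ on functions of the form $\sum_j\beta_j(\eps y)\,v_\star'(z-h_j)$ with $\sum_j\beta_j=0$. These are antisymmetric combinations, and the Toda interaction contributes a negative term of order $e^{-\sqrt2\,\mathrm{dist}}\sim\eps^2 r^{-2}$ at scale $r$. Against the positive gradient term of order $\eps^2 r^{-2}$ times the Hardy ratio, I expect a Hardy-type competition in which the Toda linearisation wins along the cone. Placing infinitely many such test functions on disjoint dyadic annuli $\{2^\ell<|y|<2^{\ell+1}\}$ then gives infinitely many independent negative directions, so $\mathrm{Morse}(u_\eps)=\infty$. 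I expect the main obstacle to be the Toda system with its logarithmic growth, together with the verification of this Hardy-type inequality with the correct constant.
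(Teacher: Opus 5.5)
The paper gives no proof of this statement; it is quoted from \cite{agudelo2022k}, and the introduction describes that construction as symmetry-based. Your overall architecture is the right one: Lyapunov--Schmidt reduction around $k$ logarithmically separated copies of $\Sigma_\eps$, a Jacobi--Toda system, and negative directions on far annuli. However, the step in which you solve for the perturbations $\phi_j$ fails as written.

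In the difference variables the linearized Jacobi--Toda operator is $J_\Sigma+\sigma_\ell V$. Here $\sigma_\ell>0$ are the eigenvalues of the (positive definite) Toda coupling, and $V>0$ is the derivative of the exponential interaction at the solution. The Toda equation forces $e^{-\sqrt2 d}$ to balance $|A_\Sigma|^2 d$, so in scaled variables $V\approx c\,(|\log\eps|+\log(2+|y|))\,|A_\Sigma|^2$. This is not a small perturbation of $J_\Sigma$ from $C^{2,\alpha}_\nu$ to $C^{0,\alpha}_{\nu-2}$. Since $|y|^2V\to\infty$ along the end, the operator has no conic indicial roots at all, and its homogeneous solutions oscillate at infinity. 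So Theorem \ref{prop-right-inverse-Jacobi}, a statement about $J_\Sigma$ alone, does not apply.

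Positivity of the coupling is also the wrong sign for invertibility. Adding a positive potential to $\Delta_\Sigma+|A_\Sigma|^2$ is exactly what destroys coercivity, and your own Morse-index step needs this operator to beat Hardy. What is missing is a separate linear theory for the linearized Jacobi--Toda operator. Under $O(m)\times O(n)$-invariance, functions on $\Sigma^\pm_{m,n}$ depend on one variable along the generating curve. The operator then becomes an ODE, which can be inverted by variation of constants despite its oscillatory kernel. Only the centre-of-mass equation is governed by $J_\Sigma$, so only it can be handled by Theorem \ref{prop-right-inverse-Jacobi}.

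The Morse-index step has two further problems.

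\emph{Wrong constraint.} In your alternating ansatz $\partial_{h_j}u_0=(-1)^j v_\star'(z-h_j)$, so imposing $\sum_j\beta_j=0$ on the coefficients of $v_\star'(z-h_j)$ selects the wrong modes. For $k=2$ your test function is $\beta(\eps y)\bigl(v_\star'(z-h_1)-v_\star'(z-h_2)\bigr)\approx\beta\,\partial_z u_0$, the common normal translation of both interfaces. On it $\mathbb{Q}_{u_\eps}$ is, to leading order, a positive multiple of $\eps^{2-N}\mathcal{Q}_\Sigma(\beta)\ge 0$, because $\Sigma^\pm_{m,n}$ is stable (its Jacobi field $y\cdot\nu$ never vanishes). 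The negative directions are those that change the mutual distances, i.e.\ $\sum_j(-1)^j\beta_j=0$. For $k=2$ this is $\beta\bigl(v_\star'(z-h_1)+v_\star'(z-h_2)\bigr)$, on which the form is $\approx\eps^{2-N}\bigl(c_1\mathcal{Q}_\Sigma(\beta)-c_2\int_\Sigma V\beta^2\,d\sigma\bigr)$.

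\emph{Missing logarithm.} The interaction is of order $\eps^2(|\log\eps|+\log r)r^{-2}$, not $\eps^2r^{-2}$. That logarithmic factor settles the ``Hardy-type competition'' you single out as the main obstacle. Since $r^2V\gtrsim|\log\eps|+\log r$ on the end, the potential term beats the Hardy constant $\frac{(N-2)^2}{4}$ on every far annulus. Cut-off oscillating functions on disjoint annuli then give infinitely many negative directions. With only $\eps^2r^{-2}$, your heuristic would leave the sign undecided.

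(Minor: the constant in $u_0$ should be $\frac{(-1)^{k+1}-1}{2}\,a$.)
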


A few comments are now in order. First, from Theorem \ref{2ndMainTheo} and taking $m,\,n \geq 2$, there are two associated minimal hypersurfaces $\Sigma^{\pm}_{m,n}$, each of which giving rise to a family of solutions. Since the nonlinearity is odd, if $u$ is a solution then $-u$ is also a solution. Thus, we have actually $4$ families of solutions.

$$
$$
$$
$$




On the other hand, part {\rm (iii)} in Theorem \ref{2ndMainTheo} implies that for any $\eps \in (0,\eps_0)$, 
$$
\limsup \limits_{R \to \infty} \frac{1}{R^N}\int_{B_R}\frac{1}{2}|\nabla u_\eps|^2 +\frac{1}{4} (1-u^2_\eps)^2< \infty
$$
and this suggests that these solutions should have {\it finite Morse index}, see \cite{{ambrosio2000entire},{ghoussoub1998conjecture}}. In any case, this is not true, which disproves a conjecture raised by Del Pino, Kowalczyk and Wei in \cite{del2013entire}.

Theorem \ref{2ndMainTheo} provide examples of $k$-ended solutions with infinite Morse index. A natural question is whether given a solution to the Allen-Cahn equation (\ref{allen-cahn-eq}) having infinite Morse index, then for some $k\in \mathbb{N}$, $k\geq 2$, this solution is  $k$-ended. In the next subsection we will see that it is not the case, at least in dimension $4\le N+1\le 7$.

\subsection{$O(m)\times O(n)$-solutions in dimension $4\le m+n\le 7$}

In dimension $4\le m+n=N+1\le 7$, the hypersurfaces constructed in Theorem \ref{th-Sigma-low-dim} can be used to construct $O(m)\times O(n)$-invariant solutions to \eqref{allen-cahn}. More precisely, we have the following Corollary of Theorem \ref{main-th}.
\begin{corollary}\label{cor-cz-lowdim}
Let $m,\,n\ge 2$, $N+1=m+n\le 7$. Let $W\in C^2(\R)$ be an even function such that $W(\pm a)=0<W(t)$ for any $t\in\R\setminus\{\pm a\}$ and $W''(\pm a)>0$. Let $\Sigma:=\Sigma_{m,n}\subset\R^{N+1}$ be the minimal hypersurface constructed in Theorem \ref{th-Sigma-low-dim}. Then there exist a family $\{u_\eps\}_{\eps\in(0,\eps_0)}$ of $O(m)\times O(n)$-invariant solutions to equation (\ref{allen-cahn}) in $\R^{N+1}$ 
such that
$$u_\eps(\cdotp/\eps)\to\pm a\qquad\text{as $\eps\to 0$,}$$ 
uniformly on compact subsets of $\Omega_\pm$ and ${\rm Morse}(u_\eps)=\infty$.   
\end{corollary}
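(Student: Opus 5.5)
The plan is to deduce Corollary \ref{cor-cz-lowdim} directly from Theorem \ref{main-th} with $S:=O(m)\times O(n)$, $C:=C_{m,n}$ and $\Sigma:=\Sigma_{m,n}$ from Theorem \ref{th-Sigma-low-dim}. Existence and the convergence $u_\eps(\cdot/\eps)\to\pm a$ on compact subsets of $\Omega_\pm$ then come from item (1) of the theorem, and ${\rm Morse}(u_\eps)=\infty$ will follow from item (2), ${\rm Morse}(u_\eps)\ge{\rm Morse}(\Sigma_{m,n})$, once we show ${\rm Morse}(\Sigma_{m,n})=\infty$.

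\textbf{Step 1: the hypotheses of Theorem \ref{main-th}.} For (H1), $C_{m,n}$ is a nontrivial minimal cone, since it is singular at the origin and hence not a hyperplane, and it is $O(m)\times O(n)$-invariant. For (H2), $\Sigma_{m,n}$ is $O(m)\times O(n)$-invariant by Theorem \ref{th-Sigma-low-dim}. For (H3), Theorem \ref{th-Sigma-low-dim}(\ref{Sigma-as}) gives the asymptotic behaviour. We need the graphical representation over $C_{m,n}$ outside a large ball with $\|w(r,\cdot)\|_{C^2(\Gamma)}\to0$; this follows from the standard ODE description of $O(m)\times O(n)$-invariant minimal hypersurfaces as orbits in the quarter plane converging to the ray of the cone, as in \cite{alencar2005m,Mazet_2017}.

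For (H4), we check that the isometries preserving $C_{m,n}$ form exactly $O(m)\times O(n)$. When $m\neq n$ this holds as stated. When $m=n$ the swap $(x,y)\mapsto(y,x)$ also preserves $C_{m,m}$. In that case I would either take $S$ to be the full stabilizer, or observe that the argument only uses $S$-invariance of $\Sigma$ and $S$-dilation-nondegeneracy with respect to a group containing $O(m)\times O(n)$. Finally, $S$-dilation-nondegeneracy is Lemma \ref{lemma-examples-morse-infty}, which also gives $\bar\nu=-\frac{N-2}{2}>2-N$ because $N\ge3$.

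\textbf{Step 2: ${\rm Morse}(\Sigma_{m,n})=\infty$.} This is the only nontrivial input. By Lemma \ref{lemma-examples-morse-infty}, $\Lambda_0^2=\left(\frac{N-2}{2}\right)^2-(N-1)<0$, so $C_{m,n}$ is unstable. Equivalently, $\lambda_0<-\left(\frac{N-2}{2}\right)^2$, where $\lambda_0=-(N-1)$ is the first eigenvalue of $-J_\Gamma$, with constant eigenfunction on $\Gamma=S^{m-1}(\cdot)\times S^{n-1}(\cdot)$.

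Take test functions $\varphi(r\theta)=\psi(r)$ supported in the asymptotic end $\Sigma_R$, which we write in coordinates $(r,\theta)$. Since $w\to0$ in $C^2$, the metric of $\Sigma_R$ is a small perturbation of the conical metric $dr^2+r^2g_\Gamma$, and $|A_\Sigma|^2=r^{-2}(|A_\Gamma|^2+o(1))$. Therefore
$$\mathcal{Q}_\Sigma(\varphi)=|\Gamma|\int_R^\infty\Big(\psi'(r)^2-\frac{N-1+o(1)}{r^2}\psi(r)^2\Big)r^{N-1}\,dr.$$
Substituting $r=e^t$ and $\psi=r^{-\frac{N-2}{2}}\eta(t)$ reduces this to $\int(\eta'^2+(\Lambda_0^2+o(1))\eta^2)\,dt$, which is a one-dimensional Schrödinger form with negative potential $\Lambda_0^2<0$ on a half-line.

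For $\eta$, choose bumps $\sin(|\Lambda_0| t/2)$ cut off to disjoint intervals of large length $T$, placed at increasingly large $t$. Each bump has negative energy, about $(\pi^2/T^2+\Lambda_0^2/2)T<0$. Disjoint supports make the corresponding $\varphi_k$ orthogonal for $\mathcal{Q}_\Sigma$, so any finite span has a negative definite form and ${\rm Morse}(\Sigma_{m,n})=\infty$. This is consistent with Theorem \ref{th-Sigma-low-dim}(\ref{Sigma-osc}), since the dilation Jacobi field $y\cdot\nu_\Sigma$ then has infinitely many nodal domains. If the paper prefers, one can instead cite \cite{alencar2005m} or \cite{agudelo2024jacobi} for infinite index.

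\textbf{Step 3: conclusion.} Theorem \ref{main-th}(2) gives ${\rm Morse}(u_\eps)\ge{\rm Morse}(\Sigma_{m,n})=\infty$. The main obstacle is making Step 2 rigorous, that is, controlling the $o(1)$ errors from the graphical perturbation. These are harmless because the bumps live at $t\to\infty$, where the errors vanish. A secondary point is the bookkeeping for (H4) when $m=n$.
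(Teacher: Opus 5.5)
Your proposal is correct and follows the same reduction as the paper. You apply Theorem \ref{main-th} with $S=O(m)\times O(n)$, using Lemma \ref{lemma-examples-morse-infty} for $S$-dilation-nondegeneracy. Existence and convergence come from item (1), and ${\rm Morse}(u_\eps)=\infty$ comes from item (2) alone, so dilation-nondegeneracy of $\Sigma_{m,n}$ is never needed.

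The difference is in the one nontrivial input, ${\rm Morse}(\Sigma_{m,n})=\infty$. The paper simply cites Theorem 1.5 of \cite{agudelo2024jacobi}. You instead prove it directly: the substitution $r=e^t$, $\psi=r^{-\frac{N-2}{2}}\eta$ turns $\mathcal{Q}_\Sigma$ on radial functions in the end into $|\Gamma|\int(\eta'^2+(\Lambda_0^2+o(1))\eta^2)\,dt$, and you then stack disjointly supported long bumps far out. Your computation is right, since the cross term $-(N-2)\int\eta\eta'\,dt$ vanishes up to $o(1)$ errors. This route also gains generality. It uses only $\Lambda_0^2<0$ and the asymptotics, not the oscillation property of $\Sigma_{m,n}$. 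It therefore shows that any hypersurface asymptotic to an unstable regular cone has infinite index; without symmetry, replace constants by the first eigenfunction of $-J_\Gamma$. The paper's citation buys brevity.

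Two small caveats.

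First, your $o(1)$ claims need scale-invariant decay $|w|/r+|\partial_r w|+r|\partial_r^2 w|\to 0$. This is more than $\|w(r,\cdot)\|_{C^2(\Gamma)}\to0$: for an $O(m)\times O(n)$-invariant $\Sigma$, $w$ does not depend on $\theta$, so that condition only says $w(r)\to 0$. For $\Sigma_{m,n}$ the stronger decay follows from the ODE asymptotics of the profile curve, or from rescaled elliptic estimates on dyadic annuli.

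Second, on (H4) when $m=n$: your first suggested fix, enlarging $S$ to the full stabilizer of $C_{m,m}$, does not work. $\Sigma_{m,m}$ meets $\R^m\times\{0\}$ orthogonally, and its profile curve touches only the $|x|$-axis. Its image under $(x,y)\mapsto(y,x)$ is therefore a different hypersurface, so $\Sigma_{m,m}$ is not invariant under that larger group and (H2) would fail. The viable option is your second one. Apply the theorem with $S=O(m)\times O(m)$, and check that maximality plays no role in the proof of Theorem \ref{main-th}, including the imported Theorem \ref{prop-right-inverse-Jacobi}. The paper's own proof passes over this point silently.
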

\begin{proof}
It is enough to apply Theorem \ref{main-th} with $\Sigma:=\Sigma_{m,n}$ and $S=O(m)\times O(n)$, since $\Sigma_{m,n}$ is $O(m)\times O(n)$-dilation-nondegenerate. Moreover, it was proved in Theorem $1.5$ of \cite{agudelo2024jacobi} that ${\rm Morse}(\Sigma)=\infty$, therefore, by point $(2)$ of Theorem \ref{main-th}, ${\rm Morse}(u_\eps)=\infty$. We note that, in this case, the inequality $Morse(u_\eps)\ge Morse (\Sigma)$ is enough, hence we do not need to know if $\Sigma$ is dilation-nondegenrate.
\end{proof}
\begin{remark}
Corollary \ref{cor-cz-lowdim} is completely new and shows an example of solutions to (\ref{allen-cahn}) which have infinite Morse index but are not $k$-ended.    
\end{remark}

\section{Analysis of heteroclinic solutions - ODE analysis }

In this part we discuss existence, symmetry and asymptotic behavior of a heteroclinic solution to the one dimensional equation 
\begin{equation}\label{ODE-vstar}
-v''_\star +W(v_\star)=0 \quad \hbox{in }\R, \qquad v_\star(0)=0, \qquad v_\star(\pm \infty)=\pm a.
\end{equation}

Assume that 
$W\in C^2(\R)$, $W(\pm a)=0$, $W(s)>0$ for $s\neq \pm a$, and $W''(\pm a)>0$.

Consider the Cauchy problem in $(0,\infty)$ for the differential equation in \eqref{ODE-vstar} with the initial conditions $v_\star(0)=0$ and $v'_{\star}(0)=\sqrt{2W(0)}$. Existence of the local solution $v_\star$ for this problem is guaranteed by the classic ODE theory. With the use of the energy of $v_\star$, namely the function
$$
t\mapsto E(v_\star)(t)= -\frac{1}{2}\big(v'_\star(t)\big)^2 + W(v_\star(t)),
$$
it is proven that $v_\star, v'_\star \in L^{\infty}(0,\infty)$ with $0<v_\star
(t)< a$ for $t>0$. This, together with the fact that 
$$
v'_\star(t)=\sqrt{2 W\big(v_{\star}(t)\big)} >0 \quad \hbox{for }t>0, 
$$
yield that $v_{\star}(\infty)=a$. Proceeding in the same manner the respective Cauchy problem in $(-\infty,0)$ is solved and hence a monotone heteroclinic solution $v_\star:\R \to (-a,a)$ exists.

\begin{remark}\label{rem-symmetry-v-star}
    We note that, if $W$ is even, then the phase plane shows that $v_\star$ is odd.
\end{remark}

Consider the linearized operator $L:=-\frac{d^2}{dt^2} + W''(v_{\star}(t))$. The monotonicity of  $v_\star$, i.e., $v'_\star>0$, yields its stability, that is the operator $L$ satisfies maximum principle or equivalently,
$$
\int_{-
\infty}^{\infty}vL(v)dt \in [0,\infty) \quad \hbox{for any} \quad v\in H^1(\R).
$$

Next, set $\gamma_{\pm}:=\sqrt{W''\big(\pm a\big)}$. Applying the comparison principle for the linearized operator $L=-\frac{d^2}{dt^2} + W''(v_{\star}(t))
$
we observe that for any $k\in \mathbb{N}$, there exists $c_k>0$ such that
\begin{equation}
\label{as-vstar+infty}
\left|\frac{d^k}{dt^k}(v_\star-a)\right|\le c_k e^{-\gamma_+ t} \qquad\text{for $t>0$}
\end{equation}
and
\begin{equation}
\label{as-vstar-infty}
\left|\frac{d^k}{dt^k}(v_\star+a)\right|\le c_k e^{\gamma_- t} \qquad\text{for $t<0$.}
\end{equation}

Finally, we note that $v'_\star\in H^1(\R)$ and is such that $Lv'_{\star}=0$. On the other hand, the function
$$w(t):=v'_\star(t)\int_0^t (v'_\star(s))^{-2}ds$$
defines a second linearly independent solution to the ODE $Lv=0$ whose asymptotic behaviour is given by $w=O(e^{\gamma_\pm|t|})$ as $t\to\pm\infty$, respectively.\\

As a consequence, we get the following result.
\begin{lema}
\label{lema-nonde-vstar}
Let $v\in H^1(\R)$ be a solution to $Lv=0$. Then $v=cv'_\star$, for some constant $c\in\R$.
\end{lema}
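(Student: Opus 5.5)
The idea is to use that $Lv=0$ is a second order linear ODE, so its solution space is two-dimensional. By the discussion preceding Lemma \ref{lema-nonde-vstar}, this space is spanned by $v'_\star$ and
$$w(t)=v'_\star(t)\int_0^t (v'_\star(s))^{-2}\,ds .$$
Given $v\in H^1(\R)$ with $Lv=0$, we write $v=c_1v'_\star+c_2w$ and show that $c_2=0$ by comparing growth at $+\infty$.

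First, the regularity. A function $v\in H^1(\R)$ solving $Lv=0$ weakly satisfies $v''=W''(v_\star)v\in L^2_{loc}$, and $W''(v_\star)$ is continuous. Bootstrapping gives $v\in C^2(\R)$, so $v$ is a classical solution. Since $v'_\star$ and $w$ are independent classical solutions, the constants $c_1,c_2$ exist.

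Next, we check that $w$ has the growth stated earlier. From \eqref{as-vstar+infty} and the ODE, $v'_\star(t)\sim c\,e^{-\gamma_+t}$ as $t\to\infty$. More robustly, comparison gives two-sided bounds $c^{-1}e^{-(\gamma_++\delta)t}\le v'_\star(t)\le c\,e^{-\gamma_+ t}$, which suffice. Then
$$\int_0^t(v'_\star)^{-2}\,ds\gtrsim e^{2\gamma_+t}.$$
Because $w$ is a solution with $w'(0)\neq 0$, $w$ is unbounded at $+\infty$. An alternative route uses the Wronskian: $W(v'_\star,w)=v'_\star w'-v''_\star w$ is a nonzero constant.

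We now derive a contradiction if $c_2\neq0$, and the Wronskian route is the cleanest. Suppose $c_2\neq 0$. Then the Wronskian of $v'_\star$ and $v$ equals $c_2$ times a nonzero constant, call it $\kappa$. On the other hand, $v'_\star,v''_\star$ are bounded and decay exponentially as $t\to\infty$ by \eqref{as-vstar+infty}. Since $v\in H^1(\R)$, we have $v(t)\to0$ as $t\to\infty$, and $v'\in L^2$. Hence
$$v'_\star v'-v''_\star v\to 0$$
in an averaged sense. Integrating over $[T,T+1]$ and using Cauchy--Schwarz, the average over $[T,T+1]$ tends to $0$ as $T\to\infty$. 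It is also identically equal to $\kappa$, so $\kappa=0$. This contradicts $c_2\neq0$, and therefore $v=c_1v'_\star$.

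The main obstacle is justifying that $v'_\star$ is nonvanishing, so that $w$ is well defined, and that the Wronskian of $v'_\star$ and $w$ is nonzero. The nonvanishing holds because $v'_\star=\sqrt{2W(v_\star)}>0$. For the Wronskian, a direct computation gives $W(v'_\star,w)=(v'_\star)^2\cdot(v'_\star)^{-2}=1$.
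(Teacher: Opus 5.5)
Your proof is correct. It shares the paper's starting point, the decomposition $v=c_1v'_\star+c_2w$, but eliminates $c_2$ by a different mechanism.

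\textbf{The paper's route.} The paper kills $c_2$ by boundedness: $v\in H^1(\R)\subset L^\infty(\R)$ and $v'_\star$ is bounded, so $c_2w$ must be bounded. This implicitly needs $w$ to be \emph{unbounded}, i.e.\ a lower bound on its growth. That in turn rests on exponential asymptotics of $v'_\star$ at infinity; the paper only records the upper bound $w=O(e^{\gamma_\pm|t|})$.

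\textbf{Your route.} You use Abel's identity: since $v'_\star w'-v''_\star w=1$, you get $v'_\star v'-v''_\star v\equiv c_2$. Averaging over $[T,T+1]$ with Cauchy--Schwarz gives $c_2=0$, using only that $v'_\star,v''_\star$ are bounded and $v,v'\in L^2(T,\infty)$. This needs no information about the growth of $w$ or the precise decay of $v'_\star$. In fact $w$ can be dispensed with altogether: once the Wronskian vanishes identically, $(v/v'_\star)'=(v'_\star v'-v''_\star v)/(v'_\star)^2=0$, using only $v'_\star>0$.

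\textbf{What each buys.} The paper's version is shorter and uses only $v\in L^\infty$. Yours uses the full $H^1$ hypothesis but asks much less of $v_\star$. That matters here because $W$ is only assumed $C^2$, and sharp asymptotics of $v'_\star$ are then delicate.

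\textbf{What to remove.} You should delete the side remarks in your third paragraph; your final argument uses neither of them, and both are flawed as written.
\begin{itemize}
\item The claim $v'_\star\sim c\,e^{-\gamma_+t}$ does not follow from \eqref{as-vstar+infty}, which is only an upper bound.
\item ``$w'(0)\neq0$, hence $w$ is unbounded at $+\infty$'' is a non sequitur. $w'(0)\neq0$ only shows that $w\not\equiv0$ and that $w$ is independent of $v'_\star$. Unboundedness would require knowing that bounded solutions form a one-dimensional space, which is what the lemma asserts.
\end{itemize}
Finally, avoid writing the Wronskian as $W(\cdot,\cdot)$, since $W$ already denotes the potential.
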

\begin{proof}
Since $v\in H^1(\R)$, then $v\in C^2\cap L^\infty(\R)$ is a classical solution to $Lv=0$, so that $v=c_1 v'_\star+c_2 w$. Since $v$ is bounded, we have $c_2=0$.
\end{proof}

We will need the following non-degeneracy result.
\begin{lema}
There exists a constant $\lambda>0$ such that any $\varphi\in H^1(\R)$ with $\int_\R \varphi v'_\star dt=0$ satisfies that
$$
\int_\R \Big[(\varphi')^2+W''(v_\star)\varphi^2\Big]dt \ge \lambda \int_\R \varphi^2dt.
$$
\end{lema}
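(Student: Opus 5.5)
We prove the spectral gap inequality by a compactness argument, or equivalently by showing that the bottom of the spectrum of $L$ on the orthogonal complement of $v'_\star$ is positive. Throughout we use three facts established above: the stability of $L$ (its quadratic form is nonnegative on $H^1(\R)$), the exponential convergence \eqref{as-vstar+infty}--\eqref{as-vstar-infty}, and Lemma \ref{lema-nonde-vstar}.

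Define
$$\lambda:=\inf\Big\{q(\varphi):=\int_\R \big[(\varphi')^2+W''(v_\star)\varphi^2\big]dt \,:\, \varphi\in H^1(\R),\ \|\varphi\|_{L^2}=1,\ \int_\R\varphi v'_\star\,dt=0\Big\}.$$
Stability gives $\lambda\ge 0$, so the task is to exclude $\lambda=0$. Set $\gamma:=\min\{\gamma_+,\gamma_-\}>0$. By \eqref{as-vstar+infty}--\eqref{as-vstar-infty} we have $W''(v_\star(t))\to\gamma_\pm^2$ as $t\to\pm\infty$, so we can fix $T>0$ with $W''(v_\star)\ge \gamma^2/2$ for $|t|\ge T$. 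We may assume $\lambda<\gamma^2/2$; otherwise there is nothing to prove.

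Take a minimizing sequence $\varphi_k$. Since $W''(v_\star)$ is bounded, $q(\varphi_k)$ bounded forces $\|\varphi_k\|_{H^1}$ bounded. Pass to a subsequence converging weakly in $H^1$ and strongly in $L^2(-T,T)$ to a limit $\varphi$. Mass cannot escape to infinity: if it did, then
$$q(\varphi_k)\ge \frac{\gamma^2}{2}\int_{|t|\ge T}\varphi_k^2-\|W''(v_\star)\|_\infty\int_{|t|<T}\varphi_k^2,$$
and letting $\int_{|t|<T}\varphi_k^2\to 0$ would give $\liminf q(\varphi_k)\ge\gamma^2/2>\lambda$, a contradiction. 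Making this quantitative yields $\|\varphi\|_{L^2}>0$.

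Next we show the infimum is attained. Write $\varphi_k=\varphi+\psi_k$ with $\psi_k\rightharpoonup 0$. Then
$$q(\varphi_k)=q(\varphi)+q(\psi_k)+o(1),$$
and $q(\psi_k)\ge\frac{\gamma^2}{2}\|\psi_k\|^2_{L^2}+o(1)$, because $\psi_k\to0$ in $L^2(-T,T)$. Since $q(\varphi)\ge\lambda\|\varphi\|^2_{L^2}$, we get
$$\lambda\ge \lambda\|\varphi\|^2+\frac{\gamma^2}{2}\limsup\|\psi_k\|^2,$$
which forces $\psi_k\to0$ in $L^2$ (because $\lambda<\gamma^2/2$). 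Hence $\varphi$ has unit norm, is a minimizer, and still satisfies $\int_\R\varphi v'_\star\,dt=0$.

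The minimizer satisfies the Euler--Lagrange equation $L\varphi=\lambda\varphi+\mu v'_\star$. Testing against $v'_\star$, using $Lv'_\star=0$, self-adjointness, and the orthogonality of $\varphi$ to $v'_\star$, gives $\mu=0$. Now suppose $\lambda=0$. Then $L\varphi=0$ with $\varphi\in H^1(\R)$, so Lemma \ref{lema-nonde-vstar} gives $\varphi=cv'_\star$. Orthogonality forces $c=0$, contradicting $\|\varphi\|_{L^2}=1$. Therefore $\lambda>0$.

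The main obstacle is the compactness step: ruling out loss of mass at infinity. This is exactly where the positivity of $W''(\pm a)$, transferred to $W''(v_\star)$ near $\pm\infty$ through the exponential convergence \eqref{as-vstar+infty}--\eqref{as-vstar-infty}, is essential.
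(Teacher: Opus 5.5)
Your argument is correct and follows essentially the same route as the paper's proof: a minimizing sequence bounded in $H^1(\R)$, the positivity of $W''(v_\star)$ near $\pm\infty$ to rule out loss of mass, and Lemma \ref{lema-nonde-vstar} applied to the limit together with orthogonality to $v'_\star$. The only difference is organizational: you show the constrained infimum is attained (via the splitting $q(\varphi_k)=q(\varphi)+q(\psi_k)+o(1)$ and a Lagrange multiplier that you prove vanishes), whereas the paper assumes $\lambda=0$ and uses stability to see that the nonzero weak limit minimizes the quadratic form over all of $H^1(\R)$, so it needs no multiplier.
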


\begin{proof}
Set 
\begin{equation}\label{def:lambda_linear_heteroclinic}
\lambda:=\inf\left\{\frac{\int_\R \Big[(\varphi')^2+W''(v_\star)\varphi^2\Big]dt }{\int_\R \varphi^2 dt}:\,\varphi\in H^1(\R),\,\int_\R \varphi v'_\star dt =0\right\}\ge 0.\end{equation}

Since $v_\star$ is stable, $\lambda \geq 0$. It is enough to prove that $\lambda>0$. 

Let $\varphi_k\in H^1(\R)$ be a minimising sequence for \eqref{def:lambda_linear_heteroclinic}, that is, 
\begin{equation}
\label{prop-varphik}
\int_\R \varphi_k v'_\star dt =0,\,\int_\R \varphi_k^2 dt=1,\,\int_\R \Big[(\varphi_k')^2+W''(v_\star)\varphi_k^2 \Big]dt \to \lambda.
\end{equation}
as $k\to\infty$. 

Since $W''(v_\star)\in L^{\infty}(\R)$, it follows from \eqref{prop-varphik} that $\varphi_k$ is bounded in $H^1(\R)$.

By passing to a subsequence, still denoted by $\{\varphi_k\}_k$, we assume that $\varphi_k\rightharpoonup \varphi$ weakly in $H^1(\R)$ and $\varphi_k\to \varphi$ strongly in $L^2_{loc}(\R)$.\\

We first assume that there exists a subsequence $\{\varphi_{k_j}\}_j$ such that for every $j\in \mathbb{N}$
$$
\int_{|t|\leq j} \varphi^2_{k_j}dt < \frac{1}{j}.
$$

Under this assumption, we proceed as follows. Using this subsequence, there exist $c_1,c_2>0$ and $j_0\in \mathbb{N}$ such that for $j\geq j_0$,
\begin{equation}
\label{ineq:estima1}
\begin{aligned}
\int_\R(\varphi_{k_j}')^2+W''(v_\star)\varphi_{k_j}^2 dt&\ge \int_\R W''(v_\star)\varphi_{k_j}^2dt\\
&=\int_{|t|<j} W''(v_\star)\varphi_{k_j}^2+\int_{|t|\ge j} W''(v_\star)\varphi_{k_j}^2\\
&\ge -\frac{c_2}{j}+c_1(1-\frac{1}{j})\ge \frac{c_1}{2}>0.
\end{aligned}
\end{equation}

From \eqref{prop-varphik} and \eqref{ineq:estima1}, taking $j\to \infty$, we obtain that $\lambda>0$.

Let us now assume that there exist $\epsilon,\,R,\,k_0>0$ such that
\begin{equation}
\label{varphi_k-concentrated}
\int_{|t|<R} \varphi_k^2 dt\ge \epsilon,\quad \hbox{for every }k\ge k_0.
\end{equation}

Let us prove that also in this case, $\lambda>0$. We argue by contradiction, i.e., assume $\lambda=0$.

Due to the strong convergence in $L^2_{loc}(\R)$, we have
\begin{equation}
\label{ineq:varphi_not_zero}  
\int_{|t|<R}\varphi_k^2\to\int_{|t|<R}\varphi^2\ge \epsilon>0
\end{equation}
so that $\varphi\ne 0$.\\

By the weak lower semicontinuity of the norm and the Fatou Lemma, we have
$$\int_\R (\varphi')^2\le \liminf_{k\to\infty}\int_\R (\varphi')^2\qquad\int_{|t|>R}W''(v_\star)\varphi^2\le \liminf_{k\to\infty}\int_{|t|>R}W''(v_\star)\varphi_k^2.$$
Finally, using stability we have 
$$0\le\int_\R (\varphi')^2+W''(v_\star)\varphi^2\le \liminf_{k\to\infty}\int_\R (\varphi_k')^2+W''(v_\star)\varphi_k^2=\lambda=0.$$

Therefore $\varphi\in H^1(\R)\setminus\{0\}$ is a minimiser of \eqref{def:lambda_linear_heteroclinic}. The associated {\it Euler-Lagrange} equation reads as $L\varphi =0$ in $\R$. Due to Lemma \ref{lema-nonde-vstar}$,\varphi=cv'_\star$ for some $c\in \R$. However, passing to the limit in the orthogonality condition in (\ref{prop-varphik}), we can see that $\varphi=0$, which contradicts \eqref{ineq:varphi_not_zero}. This completes the proof.
\end{proof}

\section{Construction of solutions}

In this Section we prove point (1) of Theorem \ref{main-th}, that is we construct a family $\{u_\eps\}_{\eps\in(0,\eps_0)}$ of solutions to equation (\ref{allen-cahn}) whose zero level set is close to a scaled copy $\Sigma_\eps:=\eps^{-1}\Sigma$ of a given minimal hypersurfaces $\Sigma$, which is only assumed to be $S$-invariant, asymptotic to a given minimal cone $C$ and $S$-dilation-nondegenerate. We stress that $\Sigma$ and $C$ do not need to 
satisfy neither stability nor symmetry properties, since $S$ can be the trivial group.

\subsection{The Fermi coordinates}
First, we recall that the principal curvatures of the cone $C$ are $O(1/|y|)$ as $|y|\to \infty$ along $C$. In view of hypothesis (H2), and using for instance a Fourier expansion in $\theta$ of the function $w(r,\theta)$, which defines $\Sigma$ as a normal graph over $C$, the principal curvatures $k_i$ of $\Sigma$ have this same property, i.e., there exists $c>0$ such that 
\begin{equation}
\label{est-princ-curv}
|\kappa_i(y)|\le \frac{c}{1+|y|}\qquad\forall\, y\in\Sigma.
\end{equation}

Next, let $\nu_{\Sigma}:\Sigma \to \mathbb{S}^{N}$ be the choice of the unit normal vector of $\Sigma$ pointing in the same direction as $\nu_C$, the normal vector to the cone $C$.

We introduce the Fermi coordinates in a tubular neighborhood of $\Sigma$. More precisely, consider the the mapping
\begin{equation}\label{def:Fermi_coord}
X:\Sigma\times\R\to\R^{N+1}, \qquad 
X(y,z):=y+z\nu_\Sigma(y).
\end{equation}

It is known that there exist $\delta>0$ such that for any $\eta>0$ the mapping in \eqref{def:Fermi_coord} defines a change of coordinates, known as the {\it Fermi coordinates} around $\Sigma$, in the tubular neighborhood
$$\mathcal{N}:=\left\{y+z\nu_\Sigma(y),\,y\in\Sigma,\,|z|<\delta(1+\eta\log(|y|+2))\right\}\subset\R^{N+1}.$$

The comments above guarantee that the amplitude of the neighborhood of $\Sigma$ where the  Fermi coordinates define a smooth diffeomorphism is linearly increasing with $|y|$. In particular, these coordinates are indeed well defined in $\mathcal{N}$. This will be enough for our purposes.

Next, we calculate the Laplace operator in {\it shifted} Fermi coordinates. We follow the strategy and calculations found in Section $4$ of \cite{agudelo2022doubling}. For $z$ fixed, we denote the normal graph over $\Sigma$ of the constant function $z$ by
$$\Sigma^z:=\{y+z\nu_\Sigma(y):\,y\in\Sigma,\,X(y,z)\in\mathcal{N}\}.$$

Using the fact that $H_\Sigma=0$ we expand
\begin{equation}\label{for:mean_curv_Sigmaz}
H_{\Sigma_z}=\sum_{i=1}^N \frac{\kappa_i(y)}{1-z\kappa_i(y)}
\end{equation}
(see also formula ($3.20$) of \cite{del2011giorgi}). \\













For $z\in\R$ fixed, we have the following result.
\begin{lemma} Let $H_{\Sigma^z}$ and $\Delta_{\Sigma^z}$ be the mean curvature and the Laplace-Beltrami operator of $\Sigma_z$, respectively. There exist smooth functions ${\tt Q}(y,z)$ and $a^{ij}(y,z)$ and $b^i(y,z)$ such that
\begin{equation}\notag
\begin{aligned}
H_{\Sigma^z}&=z|A_\Sigma|^2+{\tt Q}(y,z)\\
\Delta_{\Sigma^z}&=\Delta_\Sigma+a^{ij}(y,z)\partial_{ij}+b^i(y,z)\partial_i,
\end{aligned}
\end{equation}
where summation is taken over repeated indexes and the coefficients $a^{ij}$ and $b^{i}$ fulfil the estimates
\begin{equation}
\label{est-remainder-lapl}
|{\tt Q}(y,z)|\le cz^2(1+|y|)^{-3},\,
|a^{ij}(y,z)|\le c|z|(1+|y|)^{-1},\,
|b^{i}(y,z)|\le c|z|(1+|y|)^{-2}
\end{equation}
for any $(y,z)\in X^{-1}(\mathcal{N})$.
\end{lemma}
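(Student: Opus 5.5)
We will compute both quantities directly in local principal coordinates on $\Sigma$ and Taylor expand in $z$. The only input is the curvature decay \eqref{est-princ-curv}, together with analogous decay for its covariant derivatives.

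\textbf{Mean curvature.} We start from \eqref{for:mean_curv_Sigmaz} and expand each summand geometrically:
$$\frac{\kappa_i}{1-z\kappa_i}=\kappa_i+z\kappa_i^2+\frac{z^2\kappa_i^3}{1-z\kappa_i}.$$
Summing over $i$ and using $\sum_i\kappa_i=H_\Sigma=0$ and $\sum_i\kappa_i^2=|A_\Sigma|^2$ gives $H_{\Sigma^z}=z|A_\Sigma|^2+{\tt Q}$ with ${\tt Q}=\sum_i z^2\kappa_i^3/(1-z\kappa_i)$. In $\mathcal N$ we have $|z|\le\delta(1+\eta\log(|y|+2))$, while $|\kappa_i|\le c(1+|y|)^{-1}$. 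Hence $|z\kappa_i|\le c\,\delta(1+\eta\log(2+|y|))/(1+|y|)$, which is at most $1/2$ once $\delta$ is chosen small; this is exactly how $\delta$ is fixed when the Fermi coordinates are set up. It follows that $|{\tt Q}|\le 2c^3 z^2(1+|y|)^{-3}$, and smoothness of ${\tt Q}$ is clear.

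\textbf{Laplace--Beltrami operator.} We fix $y_0$ and local coordinates on $\Sigma$ near $y_0$, and parametrize $\Sigma^z$ by $Y(y)=y+z\nu_\Sigma(y)$. Then $\partial_iY=(I-zA_\Sigma)\partial_i y$, where $A_\Sigma$ is the shape operator, so the induced metric is
$$g_z=g-2zh+z^2 h g^{-1}h,$$
with $g$ the metric and $h$ the second fundamental form of $\Sigma$. In matrix form $g_z=g(I-zA)^2$, so $g_z^{-1}=(I-zA)^{-2}g^{-1}$. Write
$$\Delta_{\Sigma^z}=g_z^{ij}\partial_{ij}-g_z^{ij}\Gamma(z)^k_{ij}\partial_k .$$
We then set $a^{ij}=g_z^{ij}-g^{ij}$ and $b^k=-(g_z^{ij}\Gamma(z)^k_{ij}-g^{ij}\Gamma^k_{ij})$. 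Both are smooth and vanish at $z=0$. The mean value theorem in $z$ gives $a^{ij}=z\,\partial_z g_z^{ij}|_{z=s}$, which is a rational expression in $A$ with the factor $(I-sA)^{-1}$ uniformly bounded. Therefore $|a^{ij}|\le c|z||A|\le c|z|(1+|y|)^{-1}$.

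For $b^k$, the $z$-derivative of the Christoffel symbols of $g_z$ involves $\partial g_z$. Computing this produces terms of two types. The first type is one derivative of $h$, that is $\nabla A$, multiplied by bounded factors. The second type is $\Gamma$ times $A$, and these combine tensorially: differentiating $g_z^{ij}\Gamma(z)^k_{ij}$ in normal coordinates at $y_0$, where $\Gamma(0)=0$, only $\nabla A$ and $A\nabla A$ terms survive. Thus $|b^k|\le c|z|\big(|\nabla_\Sigma A_\Sigma|+|A_\Sigma||\nabla_\Sigma A_\Sigma|\big)$.

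\textbf{Main obstacle.} The hardest step is to justify $|\nabla_\Sigma A_\Sigma|\le c(1+|y|)^{-2}$. Here we use (H3): outside $K_R$, $\Sigma$ is a normal graph over the cone of a function $w$ with $w(r,\cdot)\to0$ in $C^2(\Gamma)$. By homogeneity, the cone satisfies $|\nabla^k A_C|=O(r^{-1-k})$. We would rescale: the dilation $\lambda^{-1}(\Sigma\cap\{\lambda/2<|y|<2\lambda\})$ is a minimal graph over a fixed annular piece of $C$, with graph function $\lambda^{-1}w$ small in $C^2$. Elliptic regularity for the minimal surface equation upgrades this to $C^{3}$ smallness. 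Scaling back gives $|\nabla A_\Sigma|\le c\lambda^{-2}$ on that annulus. On the compact set $K_R$ everything is bounded. Combining the two regions yields the $b^i$ estimate, and uniformity over $X^{-1}(\mathcal N)$ follows because all the bounds above depend only on $|z\kappa_i|\le1/2$.
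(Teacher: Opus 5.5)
Your argument is correct and follows the paper's proof essentially step by step. For the mean curvature you use the same Taylor expansion of $\kappa_i/(1-z\kappa_i)$ together with $H_\Sigma=0$. For the Laplacian you use the same metric expansion $G=g-2zA+z^2N$, with $a^{ij}=G^{ij}-g^{ij}$ and $b$ the difference of the first-order coefficients.

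Your additions are worthwhile. Noting $N=hg^{-1}h$ shows its decay follows from curvature decay alone. Normal coordinates make the Christoffel terms tensorial. Your blow-down and elliptic-regularity argument justifies $|\nabla_\Sigma A_\Sigma|\le c(1+|y|)^{-2}$, which the paper only asserts (``this estimate can be differentiated in $y$''). One caveat: the $C^2$-smallness of $\lambda^{-1}w(\lambda s,\theta)$ in the radial variable needs (H3) to control $\partial_r w$ as well, i.e.\ smooth convergence of the blow-downs. As literally stated, (H3) controls only $\theta$-derivatives, though the paper also implicitly assumes this stronger reading.
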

\begin{proof} First we deal with the calculations for $H_{\Sigma^z}$. Set 
\begin{equation}\notag
{\tt Q}(y,z):=H_{\Sigma_z}-z|A_\Sigma|^2
\end{equation}
and consider the functions  $f_i(y,z):=\frac{\kappa_i(y)}{1-z\kappa_i(y)}$ for $(y,z)\in \mathcal{N}$. Observe that, for any $(y,z)\in X^{-1}(\mathcal{N})$ and $l\ge 0$,
\begin{equation}\label{ineq:Q}
|\partial_z^{(l)}f_i(y,z)|=\left|l!\frac{\kappa_i(y)^{l+1}}{(1-z\kappa_i(y))^{l+1}}\right|
\le  c_l|\kappa_i^{l+1}(y)|
\le c_l(1+|y|)^{-(l+1)}.
\end{equation}

These estimates together with \eqref{for:mean_curv_Sigmaz} imply that
\begin{equation}
\label{est-H_Sigma_z}
|{\tt Q}(y,z)|\le cz^2(1+|y|)^{-3}\qquad\forall\,(y,z)\in X^{-1}(\mathcal{N})
\end{equation}
and this relation can be differentiated in both variables.

\vskip 3pt
Next, for the calculations of $\Delta_{\Sigma^z}$ we proceed as follows. Let $Y:\Omega \subset \R^N\to \Sigma$ be a local parametrisation of $\Sigma$, where $\Omega$ is an open set. Let $\Pi:\Sigma\times\R\to\Sigma$ denote the projection onto $\Sigma$, i.e., $\Pi(y,z):=y$ for any $(y,z)\in \Sigma\times \R$.

Recall that $z\in \R$ is arbitrary, but fixed. Consider the open set
$$
\Omega':= \Big(X \circ \big( Y(\cdot),z\big)\Big)^{-1}(\mathcal{N})=Y^{-1}(\Pi(X^{-1}(\Sigma^z)))\subset\Omega
$$
and the mapping 
$$
Y_z:\Omega'\mapsto \Sigma^z, \qquad Y_z(\x) := X(Y(\x),z)
$$
 so that $Y_z$ is a parametrisation of $\Sigma^z$.

Let $g=(g_{ij})_{N\times N}$ be the metric induced by the standard metric of $\R^{N+1}$ on $Y(\Omega)\subset \Sigma$. The coefficients of the corresponding induced metric on $Y_z(\Omega')$ in local coordinates read as 
\begin{equation}
\label{exp-G_ij}
G_{ij}=g_{ij}-2zA_{ij}+z^2 N_{ij},\qquad 1\le i,j\le N,
\end{equation} 
where $A_{ij}$ are the coefficients of the second fundamental form of $\Sigma$ and $N_{ij}=\partial_i\nu_{\Sigma}\cdotp\partial_j\nu_\Sigma$.
The Laplace-Beltrami operator $\Delta_{\Sigma^z}$ is then calculated from the well-known formula
\begin{equation}
\Delta_{\Sigma_z}=\frac{1}{\sqrt{|G|}}\partial_i(\sqrt{|G|}G^{ij}\partial_j)=G^{ij}\partial_{ij}+(\partial_i G^{ij}+\bar{\Gamma}_{ki}^k G^{ij})\partial_j,
\label{lapl-Sigma_z}
\end{equation}
where $|G|:=|\det G|$, $G^{-1}=(G^{ij})_{N\times N}$ is the inverse of the metric $G$ and 
\begin{equation}
\label{Chr-symbols}
\bar{\Gamma}^i_{jk}=\frac{1}{2}G^{im}\left(\frac{\partial G_{mk}}{\partial\x^j}+\frac{\partial G_{mj}}{\partial\x^k}
-\frac{\partial G_{kj}}{\partial\x^m}\right)
\end{equation}
are the Christoffel symbols of $\Sigma^z$.\\

Similar calculations as those in \cite{agudelo2022doubling} yield
\begin{equation}\notag
\Delta_{\Sigma^z}=\Delta_{\Sigma}+a^{ij}(y,z)\partial_{ij}+b^i(y,z)\partial_i,
\end{equation}
where
\begin{equation}\notag
\begin{aligned}
a^{ij}&:=G^{ij}-g^{ij},\qquad 1\le i,j\le N\\
b^j&:=\partial_i G^{ij}+\bar{\Gamma}_{ki}^k G^{ij}-\partial_i g^{ij}-\Gamma_{ki}^k g^{ij},\qquad 1\le i,j\le N
\end{aligned}
\end{equation}
where $\Gamma^i_{jk}$ are the Christoffel symbols of $\Sigma$.

Recall the matrices $A=(A_{ij})_{N\times N}$ and $N=(N_{ij})_{N\times N}$. Consider the symmetric matrices $g^{-1}A:=(A^i_j)_{N\times N}$ and $g^{-1}N:=(N^i_j)_{N\times N}$, i.e., $A^{i}_j:=g^{ik}A_{kj}$ and $N^{i}_j:=g^{ik}N_{kj}$ for $1\leq i,j\leq N$. As a consequence, (\ref{exp-G_ij}) reads as 
$$G_{ij}=g_{ik}(\delta_j^k-2zA_j^k+z^2 N_j^k)$$

Let us estimate the coefficients $A^i_j$. Since the principal curvatures $\kappa_i$ of $\Sigma$ are the eigenvalues of $g^{-1}A$, for any $y\in\Sigma$, there exists an orthonormal basis $\{v_1(y),\dots,v_N(y)\}$ of eigenvectors of $g^{-1}A$ such that 
$$
g^{-1}A=Q^{-1}{\rm diag}(\kappa_1(y),\dots,\kappa_N(y))Q,
$$ 
where $Q$ is the orthogonal matrix function whose columns are given by $v_1(y),\dots,v_N(y)$. Since the matrix functions $Q(y)$ and $Q^{-1}(y)$ are uniformly bounded, from \eqref{est-princ-curv} we conclude that
\begin{equation}
\label{decayA^i_j}
|A^i_j(y)|\le c(1+|y|)^{-1}\qquad\forall\,1\le i,j\le N.
\end{equation}

Next, to estimate the coefficients $N^i_j$, we use again hypothesis (H3) to find that,
\begin{equation}\notag
    |N^{i}_j(y)|\le c(1+|y|)^{-2}\qquad\forall\,1\le i,j\le N.
\end{equation}
the inverse $G^{-1}$ of the metric on $\mathcal{N}$ fulfils
\begin{equation}
\label{exp-G^ij}
G^{ij}=g^{ik}(\delta_k^j+a_k^j(y,z))=g^{ij}+a^{ij}(y,z)
\end{equation}
with $a^{ij}$ fulfilling
\begin{equation}
\label{est-bar-a^ij}
|a^{ij}(y,z)|\le c|z|(1+|y|)^{-1}\qquad\forall\,(y,z)\in X^{-1}(\mathcal{N}),
\end{equation}
and this estimate can be differentiated in $y$.\\

In order to estimate $b^j$ we note that 
by (\ref{exp-G_ij}) and (\ref{exp-G^ij})
$$|\bar{\Gamma}^{k}_{ki}-\Gamma^{k}_{ki}|\le c|z|(1+|y|)^{-2}\qquad\forall\,(y,z)\in X^{-1}(\mathcal{N}),\,1\le i\le N,$$
due to (\ref{est-bar-a^ij}). As a consequence
\begin{equation}
\label{est-b^j}
\begin{aligned}
b^j&=\partial_i (G^{ij}-g^{ij})+(\bar{\Gamma}^k_{ki}-\Gamma^k_{ki})G^{ij}+\Gamma^k_{ki}(G^{ij}-g^{ij})=\\
&\partial_i a^{ij}+\Gamma^k_{ki}a^{ij}+(\bar{\Gamma}^k_{ki}-\Gamma^k_{ki})G^{ij},
\end{aligned}
\end{equation}
so that
\begin{equation}\notag
|b^j(y,z)|\le c|z|(1+|y|)^{-2}\qquad\forall\,(y,z)\in X^{-1}(\mathcal{N}).
\end{equation}
\end{proof}
Given a shift function $h\in C^2(\Sigma)\cap W^{2,\infty}(\Sigma)$ with $\|h\|_{L^\infty(\Sigma)}<\frac{\delta}{3}$, we introduce a new change of variables by setting 
\begin{equation}\label{shifted-coord}
z:=\eps(t+h(\eps \y)).    
\end{equation}
In other words, the mapping $$X_{\eps,h}(\y,t):=\y+(t+h(\eps \y))\nu_\Sigma(\eps \y)$$
defines new coordinates $(\y,t)$ in the tubular neighbourhood $$\mathcal{N}_{\eps,h}:=\left\{\y+(t+h(\eps \y))\nu_\Sigma(\eps \y):\,\y\in\Sigma_\eps,\,|t|<\frac{\delta}{2\eps}+\frac{\delta\tilde{\eta}}{\eps}\log(|\eps\y|+2)\right\}$$
of the scaled hypersurface $\Sigma_\eps:=\eps^{-1}\Sigma$, known as the \textit{shifted Fermi coordinates}. The constant $\tilde{\eta}>0$ will be determined in the proof of Lemma \ref{lemma-error-far}.\\

We are interested in expanding the Laplacian in shifted Fermi coordinates. The computations can be found in Section $4$ of \cite{agudelo2022doubling}, they are identical, here we only recall the expansion
\begin{equation}
\label{Laplacian_shifted_Fermi}
\begin{aligned}
\Delta&=\Delta_{\Sigma_\eps}+\partial^2_t-\eps^2\big(\Delta_\Sigma h+|A_\Sigma|^2 h\big)\partial_t- \eps^2 t|A_{\Sigma}|^2 \partial_{t} - 2 \eps\nabla_\Sigma h\cdotp\partial_t\nabla_{\Sigma_\eps}+\eps^2|\nabla_\Sigma h|^2\partial^2_t\\
&-\eps{\tt Q}\partial_t +(a^{ij}\partial^2_{ij}+\eps b^j\partial_j)-\eps^2(a^{ij} \partial_{ij}h+b^j \partial_j h)\partial_t-2\eps a^{ij} \partial_i h\partial_{tj}+\eps^2a^{ij}\partial_i h\partial_j h \partial^2_t,
\end{aligned}
\end{equation}
where $1\le i,j\le N$, $\tt h$ and its derivatives are evaluated at $\eps \y$, while ${\tt Q},\, a^{ij}$ and $b^j$ are evaluated at $(\eps \y,\eps(t+h(\eps \y)))$. We recall that $a^{ij}$ and $b^i$ satisfy (\ref{est-remainder-lapl}). Here the remainders $\bar{a}^{ij}$, $\bar{b}^i$ and ${\tt Q}$ fulfil the estimates
\begin{equation}
\label{est_remainder_laplacian}
\begin{aligned}
|{\tt Q}(\eps \y,\eps(t+h(\eps \y)))|&\le C\frac{\eps^2(t+h(\eps \y))^2}{(1+\eps |\y|)^3}\\
|a^{ij}(\eps \y,\eps(t+h(\eps \y)))|&\le C\frac{\eps|t+h(\eps \y)|}{(1+\eps |\y|)}
\\
|b^i(\eps \y,\eps(t+h(\eps \y)))|&\le C\frac{\eps|t+h(\eps \y)|}{(1+\eps |\y|)^2}.
\end{aligned}
\end{equation}
\subsection{The approximate solution}
We take a shift function $h\in C^{2,\alpha}_\nu(\Sigma)$, for some suitable $\nu\in\R$, and we define the first approximation of our solution to \eqref{allen-cahn} as $v_\star(t)$, where $(t,\y)$ are the shifted Fermi coordinates defined in \eqref{shifted-coord}. In order to choose the appropriate $\nu$, we recall that
\begin{equation}\notag
\bar{\nu}:=\inf\{\nu\in\R:\, \limsup_{|y|\to\infty}|y|^{-\nu}|\zeta_0(y)|=0\}
\end{equation}
is the decay rate of the polynomial part of the Jacobi field $\zeta_0(y):=y\cdotp \nu_\Sigma(y)$. Recalling that $\Sigma$ is $S$-dilation-nondegenerate, we have $\bar{\nu}>2-N$, hence it is possible to take 
\begin{equation}
\label{choice-nu}
\nu\in\left(\left(2-N-\bar{\nu},0\right)\cap[-1,0)\right)\setminus\{-\frac{N-2}{2}\pm{\rm Re}(\Lambda_j)\}_{j\ge 0}    
\end{equation}
This is actually the suitable choice to solve \eqref{allen-cahn}.


The reasons of the restrictions about $\nu$ are the following: we need to take $\nu<0$ since we want $h$ to decay at infinity, so that the zero level set of our solutions will be asymptotic to $\Sigma$ at infinity. Moreover, we need to take $\nu\ge -1$ since the error coming from the approximation will decay as $(1+|\eps\y|)^{-3}$ (see the estimate of $E_2$ in \eqref{est-error} below), hence we want $(1+|y|)^{-3}$ to belong to $C^{0,\alpha}_{\nu-2}(\Sigma)$, which is the case if we require $\nu-2\ge-3$. The other restrictions come from Theorem \ref{prop-right-inverse-Jacobi}, which guarantees the existence of a right inverse of the Jacobi operator $J_\Sigma$ if $\nu>2-N-\bar{\nu}$ and $\nu+\frac{N-2}{2}\notin \{\pm{\rm Re}(\Lambda_j)\}_{j\ge 0}$. \\

As a consequence, given a constant $C_1>0$ (to be determined later through a fixed point argument), a constant $\alpha\in(0,1)$ and a shift function $h\in C^{2,\alpha}_\nu(\Sigma)$ such that $\|h\|_{C^{2,\alpha}_\nu(\Sigma)}<C_1\eps$, a first approximation can be given, in shifted Fermi coordinates, by $v_\star(t)$, where $v_\star$ solves (\ref{ODE-vstar}). Using the expansion of the Laplacian in shifted Fermi coordinates (\ref{Laplacian_shifted_Fermi}) and the ODE satisfied by $v_\star$, it is possible to see that
\begin{equation}
\label{first-error}
-\Delta v_\star+W'(v_\star)=\eps^2 J_\Sigma h v'_\star+\eps^2|A_\Sigma|^2 tv'_\star-\eps^2|\nabla_\Sigma h|^2 v''_\star\\
+\eps{\tt Q}v'_\star+\eps^2(a^{ij}\partial_{ij}h+b^i\partial_i h)v'_\star-\eps^2 a^{ij}\partial_i h\partial_j h v''_\star
\end{equation}
where $|A_\Sigma|^2$, $h$ and its derivatives are evaluated at $\eps\y$ while ${\tt Q}$, $a^{ij}$ and $b^i$ are evaluated at $(\eps\y,\eps(t+h(\eps\y)))$. The term $\eps^2|A_\Sigma|^2 tv'_\star$ in (\ref{first-error}) is too large for our purposes, since $|A_\Sigma(y)|^2=O(|y|^{-2})$ as $|y|\to\infty$ only, therefore we add a correction of the form $\eps^2|A_\Sigma(\eps\y)|^2\eta(t)$, where $\eta:\R\to\R$ solves the problem
$$-\eta''+W''(v_\star)\eta=-tv'_\star\qquad\int_\R \eta v'_\star=0,$$ 
so that the new approximated solution $U(\y,t):=v_\star(t)+\eps^2|A_\Sigma(\eps\y)|^2\eta(t)$ is such that
\begin{equation}
\label{improved-error}
-\Delta U+W'(U)=\eps^2 J_\Sigma h v'_\star+E_1(\y,t)+E_2(\y,t),
\end{equation}
where the error terms $E_1$ and $E_2$ fulfil
\begin{equation}
\label{est-error}
\begin{aligned}
&\int_\R E_1(\y,t) v'_\star(t)dt=0\qquad\forall\,\y\in \Sigma_\eps\\
&|E_1(\y,t)|\le c\eps^4 e^{-\gamma |t|}(1+|\eps\y|)^{2\nu},\,|E_2(\y,t)|\le c\eps^3 e^{-\gamma |t|}(1+|\eps\y|)^{-3}\qquad\forall\,(\y,t)\in X_{\eps,h}^{-1}(\mathcal{N}_{\eps,h})
\end{aligned}
\end{equation}
for some $\gamma\in(0,\sqrt{W''(a)})$. The proof of (\ref{improved-error}) and (\ref{est-error}) relies once again on the expansion of the Laplacian in shifted Fermi coordinates (\ref{Laplacian_shifted_Fermi}) and the choice of $h$. In fact, an explicit computation shows that
\begin{equation}
\label{def-E1}
\begin{aligned}
E_1(\y,t)=&-\eps^2|\nabla_\Sigma h|^2 v''_\star-\eps^2 a^{ij}h_i h_j v''_\star-\eps^4\Delta_\Sigma |A_\Sigma|^2\eta+\eps^4 |A_\Sigma|^4 t\eta'-\\
&\eps^4|\nabla_\Sigma h|^2 |A_\Sigma|^2 \eta''-\eps^4(a^{ij}\partial_{ij}|A_\Sigma|^2+b^i\partial_i|A_\Sigma|^2)\eta-\eps^4 a^{ij}h_i h_j |A_\Sigma|^2\eta''
\end{aligned}
\end{equation}
and
\begin{equation}
\label{def-E2}
\begin{aligned}
E_2(\y,t)=&W'(U)-W'(v_\star)-\eps^2|A_\Sigma|^2 W''(v_\star)\eta+\eps {\tt Q}v'_\star+\eps^2\left(a^{ij}h_{ij}+b^jh_j\right) v'_\star\\
&+\eps^4 J_\Sigma h |A_\Sigma|^2 \eta'+2\eps^4 \nabla_\Sigma h\cdotp\nabla_\Sigma |A_\Sigma|^2 \eta'+\eps^3{\tt Q}|A_\Sigma|^2\eta'\\
&+\eps^4(a^{ij}h_{ij}+b^i h_i)|A_\Sigma|^2\eta'+2\eps^4 a_{ij}h_i\partial_j|A_\Sigma|^2 \eta'
\end{aligned}
\end{equation}
\subsection{A gluing procedure}
We need to reduce the equation on $\R^{N+1}$ to a problem on the product space $\Sigma_\eps\times\R$. We introduce some useful notation. For a function $u:\Sigma_\eps\times\R\to\R$, we set
\begin{equation}\notag
u^{\natural}(\xi):=
\begin{cases}
u\circ X_{\eps,h}^{-1}(\xi),\qquad &\text{for $\xi\in\mathcal{N}_{\eps,h}\subset\R^{N+1}$}\\
\hspace{1cm}0, \qquad &\text{otherwise}.
\end{cases}
\end{equation}
On the other hand, given a function $v:\R^{N+1}\to\R$, we define for $(\y,t)\in\Sigma_\eps\times\R$,
\begin{equation}\notag
v^{\sharp}(\y,t):=\begin{cases}
v\circ X_{\eps,h}(\y,t)\qquad&\text{if $(\y,t)\in X^{-1}_{\eps,h}(\mathcal{N}_{\eps,h})$}\\
\hspace{1cm}0,\qquad &\text{otherwise.}
\end{cases}
\end{equation}
The notation just introduced can be explained as follows: $u^\natural$ refers to an expression of $u$ in {\it{natural}} or Euclidean coordinates while $u^\sharp$ refers to a function expressed in shifted Fermi coordinates.\\ 

Given a smooth cutoff function $\chi:\R\to[0,1]$ such that $\chi=0$ in $(-\infty,1)$ and $\chi=1$ in $(2,\infty)$ we set, for $i\ge 1$, $$\chi_i(\y,t):=\chi\left(|t|-\frac{\delta}{3\eps}-\frac{\delta\tilde{\eta}}{\eps}\log(|\eps\y|+2)-i\right),$$
with $\tilde{\eta}>0$ to be determined later (see Lemma \ref{lemma-error-far}) and we define a global approximation of the form
\begin{equation}\notag
w(x):=\chi^\natural_4 U^\natural+(1-\chi^\natural_4)\mathbb{H},
\end{equation}
where $\mathbb{H}=1$ in the connected component of $\R^{N+1}\backslash\Sigma_\eps$ containing the origin, $\mathbb{H}=-1$ in the other connected component of $\Sigma_\eps$. Then we look for a solution of the form $$u=w+\varphi,\qquad\varphi=\chi^\natural_2 \phi^\natural+\psi$$
where the unknowns are the functions $\phi:\Sigma_\eps\times\R\to\R$ and $\psi:\R^{N+1}\to\R$. Using that $\chi^\natural_1\chi^\natural_2=\chi^\natural_1$, we can rewrite the Allen-Cahn equation in the form
\begin{equation}\notag
\begin{aligned}
0&=S(w)-\Delta\varphi+W''(w)\varphi+Q_w(\varphi)=\\
&=\chi^\natural_2(-\Delta\phi^\natural+W''(U^\natural)\phi^\natural+\chi^\natural_1(W''(U^\natural)-W''(a))\psi+\chi^\natural_1 Q_{U^\natural}(\phi^\natural+\psi)+S(U^\natural))\\
&-\Delta\psi+((1-\chi^\natural_1)W''(w)+\chi^\natural_1 W''(a))\psi+(1-\chi^\natural_2)S(w)\\
&+(1-\chi^\natural_1)Q_w(\chi^\natural_2 \phi^\natural+\psi)-2\nabla \phi^\natural\cdotp
\nabla\chi^\natural_2-\Delta\chi^\natural_2\phi^\natural,
\end{aligned}
\end{equation}
where we have set $S(w):=-\Delta w+W'(w)$ and $Q_w(t):=3wt^2+t^3$. As a consequence, the Allen-Cahn equation (\ref{allen-cahn}) is solved if we find a solution $(h,\phi,\psi)$ of the coupled system
\begin{equation}
\label{eq-far}
\begin{aligned}
&-\Delta\psi+((1-\chi^\natural_1)W''(w)+\chi^\natural_1 W''(a))\psi+(1-\chi^\natural_2)S(w)\\
&+(1-\chi^\natural_1)Q_w(\chi^\natural_2 \phi^\natural+\psi)-2\nabla \phi^\natural\cdotp
\nabla\chi^\natural_2-\Delta\chi^\natural_2\phi^\natural=0\qquad\text{in $\R^{N+1}$}\\
\end{aligned}
\end{equation}
\begin{equation}
\label{eq-near}
-\Delta_{\Sigma_\eps}\phi-\partial^2_t\phi+W''(v_\star)\phi+B(\phi)+\chi_1(W''(U)-W''(a))\psi^\sharp+\chi_1 Q_{U}(\phi+\psi^\sharp)+S(U)=0
\end{equation}
where
$$B(\phi):=\chi_3(-\Delta+\Delta_{\Sigma_\eps}+\partial^2_t+W''(U)-W''(v_\star))\phi,$$
where the Laplacian is expressed in Fermi coordinates.\\

The strategy of the proof is the following. First we fix $h$ and $\phi$ and we look for a solution $\psi:=\psi(\phi,h)$ to equation (\ref{eq-far}), using the coercivity of the operator $-\Delta+(1-\chi^\natural_1)W''(w)-\chi^\natural_1 W''(1)$, then we plug the solution into equation (\ref{eq-near}) and we solve it in $(\phi,h)$. Equation (\ref{eq-near}) can be rewritten as
\begin{equation}
\label{eq-LS}
-\Delta_{\Sigma_\eps}\phi-\partial^2_t\phi+W''(v_\star)\phi=\mathcal{F}_\eps(\y,t,\phi,h)\qquad\text{in $\Sigma_\eps\times\R$,}
\end{equation}
where we have set $$\mathcal{F}_\eps(\y,t,\phi,h):=-(B(\phi)+\chi_1(W''(U)-W''(a))\psi^\sharp+\chi_1 Q_{U}(\phi+\psi^\sharp)+S(U)).$$
Since the operator $-\Delta_{\Sigma_\eps}-\partial^2_t+W''(v_\star)$ has a one-dimensional Kernel generated by $v'_\star$, it is possible to solve equation (\ref{eq-LS}) up to a Lagrange multiplier. More precisely, for fixed $h$, it is possible to find a solution $\phi=\phi(h)$ to the problem
\begin{equation}
\label{eq-aux}
\begin{aligned}
-\Delta_{\Sigma_\eps}\phi-\partial^2_t\phi+W''(v_\star)\phi&=\mathcal{F}_\eps(\y,t,\phi,h)+\lambda(\y)v'_\star(t)\\
\int_\R \phi(\y,t)v'_\star(t)dt&=0\qquad\forall\, \y\in\Sigma_\eps,
\end{aligned}
\end{equation}
where 
\begin{equation}
\label{def-lambda}
\lambda(\y):=\frac{1}{\int_\R(v'_\star)^2}\int_\R \mathcal{F}_\eps(\y,t,\phi,h) v'_\star(t)dt.
\end{equation}
depends on $h$. To conclude the proof, we need to find $h$ such that 
\begin{equation}
\label{eq-bifo}
\lambda(\y)=0.
\end{equation}
Equation (\ref{eq-aux}) is known as the \textit{auxiliary equation} and equation (\ref{eq-bifo}) is known as the \textit{bifurcation equation}. this technique is called the \textit{Lyapunov-Schmidt reduction}. All these steps will be explained in the next subsection.

\subsection{The Lyapunov-Schmidt reduction}

For $\beta\in(0,1)$, $\mu\in\R$ and functions $g\in C^{0,\beta}_{loc}(\R^{N+1})$, we introduce the norm
\begin{equation}\notag
\|g\|_{\infty, \mu}:=\|(|\eps x|^2+2)^{-\frac{\mu}{2}}g\|_{L^\infty(\R^{N+1})}.
\end{equation}
Moreover, we say that $g\in{\tt Y}^{{\natural,\beta}}_{{\mu}}$ if it is $S$-invariant and the norm
\begin{equation}\notag
\|g\|_{{\tt Y}^{{\natural,\beta}}_{{\mu}}}:=\sup_{\xi\in\R^{N+1}}(2+|\eps x|^2)^{-\frac{\mu}{2}}\|g\|_{C^{0,\beta}(B_1(x))}
\end{equation}
is finite. We also say that a function $\psi\in C^{2,\beta}_{loc}(\R^{N+1})$ is in ${\tt X}^{{\natural,\beta}}_{{\mu}}$ if it is $S$-invariant and the norm
\begin{equation}
\label{norm_C2pmu}
\|\psi\|_{{\tt X}^{{\natural,\beta}}_{{\mu}}}:=\sup_{x\in\R^{N+1}}(2+|\eps x|^2)^{-\frac{\mu}{2}}\|D^2\psi\|_{C^{0,\beta}(B_1(x))}
+\|\nabla\psi\|_{{\infty,\mu}}+\|\psi\|_{{\infty,\mu}}
\end{equation}
is finite.
Given $\beta\in (0,1)$, $\gamma,\,\mu\in\R$ and a function $f\in C^{0,\beta}_{loc}(\Sigma_\eps\times\R)$, we define the norm
\begin{equation}\notag
\|f\|_{\infty, \mu,\gamma}:=(|\eps\y|^2+2)^{-\frac{\mu}{2}}\cosh(t)^{-\gamma} f\|_{L^\infty(\Sigma_\eps\times\R)},
\end{equation}
Furthermore, we say that $f\in Y^{\sharp,\beta}_{\mu,\gamma}$ if it is $S$-invariant and the norm
\begin{equation}\notag
\|f\|_{Y^{{\sharp,\beta}}_{{\mu, \gamma}}}:=\sup_{\y\in \Sigma_\eps,\,t\in\R}(|\eps\y|^2+2)^{-\frac{\mu}{2}}\cosh(t)^{-\gamma} \|f\|_{C^{0,\beta}(I_{\y,t})},\qquad I_{\y,t}:=B_1(\y)\times(t,t+1) 
\end{equation}
is finite. Besides, for $S$-invariant functions $\phi\in C^{2,\beta}_{loc}(\Sigma_\eps\times\R)$, we say that $\phi\in X^{{\sharp,\beta}}_{{\mu, \rho}}$ if the norm
\begin{equation}\notag
\|\phi\|_{X^{{\sharp,\beta}}_{{\mu, \gamma}}}:=\sup_{\y\in\Sigma_\eps,\,t\in\R}(|\eps\y|^2+2)^{-\frac{\mu}{2}}\cosh(t)^{-\gamma} \|D^2\phi\|_{C^{0,\beta}(I_{\y,t})}+\|\nabla\phi\|_{{\infty,\mu,\gamma}}
+\|\phi\|_{{\infty,\mu,\gamma}}
\end{equation}
is finite.
\begin{lemma}
\label{lemma-error-far}
In the above notations, the error fulfils
$$|(1-\chi^\natural_2)S(w)|\le C e^{-\frac{\sqrt{2}\delta}{3\eps}}(|\eps x|^2+2)^{-\frac{\tilde{\eta}\delta\sqrt{2}}{2}}.$$
\end{lemma}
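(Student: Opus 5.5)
The support of $1-\chi^\natural_2$ is the set where $\chi_2<1$. Inside $\mathcal N_{\eps,h}$ this means
$$|t|<\frac{\delta}{3\eps}+\frac{\delta\tilde\eta}{\eps}\log(|\eps\y|+2)+4,$$
and outside $\mathcal N_{\eps,h}$ it means everything, since there $\chi_2^\natural=0$. The plan is to split this support into three regions according to where $w$ changes form.

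\emph{Region 1:} $\chi_4^\natural=0$. Here $w=\mathbb H=\pm a$ is constant, so $S(w)=W'(\pm a)=0$. (The normalisation $\pm1$ in the paper should be read as $\pm a$.)

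\emph{Region 2:} $\chi_4^\natural=1$. Here $w=U$, and by the previous region we only need the part of this set where $\chi_2<1$. On that part $|t|<\frac{\delta}{3\eps}+\frac{\delta\tilde\eta}{\eps}\log(|\eps\y|+2)+4$, but we also know $|t|\ge \frac{\delta}{3\eps}+\frac{\delta\tilde\eta}{\eps}\log(|\eps\y|+2)+1$, because otherwise $\chi_4=0$. So $|t|$ is large, and \eqref{improved-error} gives
$$S(U)=\eps^2J_\Sigma h\, v_\star'+E_1+E_2 .$$
Every term carries a factor $e^{-\gamma|t|}$, from \eqref{as-vstar+infty}, \eqref{as-vstar-infty} and \eqref{est-error}. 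The polynomial weights in $\y$ are bounded, since $\nu<0$ and $\|h\|_{C^{2,\alpha}_\nu}<C_1\eps$.

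\emph{Region 3:} the transition layer where $0<\chi_4<1$. Here
$$S(w)=\chi_4S(U)+W'(w)-\chi_4W'(U)-2\nabla\chi_4\cdot\nabla U-\Delta\chi_4\,(U-\mathbb H).$$
On this set $|t|$ lies in the same range as in Region 2, so $|U-\mathbb H|$, $|\nabla U|$ and $|W'(w)|$ are all $O(e^{-\gamma_\pm|t|})$, using again the exponential convergence of $v_\star$ to $\pm a$. For $\eps^2|A_\Sigma|^2\eta$ we use that $\eta$ is bounded by $|t|e^{-\gamma|t|}$ by comparison. The derivatives of $\chi_4$ are bounded: $\nabla_{\y}$ of $\frac{\delta\tilde\eta}{\eps}\log(|\eps\y|+2)$ equals $O(\tilde\eta)$, and the Laplacian in shifted Fermi coordinates is controlled by \eqref{Laplacian_shifted_Fermi}. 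Since $W''(\pm a)>0$, Taylor expansion gives $|W'(w)|\le C|w\mp a|$.

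Combining the three regions, every surviving term is bounded by $C e^{-\gamma|t|}$ with
$$|t|\ge \frac{\delta}{3\eps}+\frac{\delta\tilde\eta}{\eps}\log(|\eps\y|+2).$$
This yields
$$e^{-\gamma\delta/(3\eps)}\,(|\eps\y|+2)^{-\gamma\delta\tilde\eta/\eps}\le e^{-\gamma\delta/(3\eps)}\,(|\eps\y|+2)^{-\gamma\delta\tilde\eta}$$
for $\eps<1$. Taking $\gamma=\sqrt2$, which is $\sqrt{W''(a)}$ for the model potential (otherwise any $\gamma<\gamma_\pm$ works), gives the stated exponent.

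It remains to compare $|\eps\y|$ with $|\eps x|$, where $x=X_{\eps,h}(\y,t)$. We have
$$|\eps x-\eps\y|\le \eps|t|+\eps|h|\lesssim \delta\bigl(1+\tilde\eta\log(|\eps\y|+2)\bigr).$$
Hence $(|\eps x|^2+2)^{1/2}$ and $|\eps\y|+2$ are comparable up to constants, and this is where $\tilde\eta$ must be taken small, in the same way $\eta$ was chosen for the validity of the Fermi coordinates.

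\textbf{Main obstacle.} The hardest step is Region 3, where we must check that the derivatives of the cutoff and the curvature-dependent terms of the Laplacian in $\mathcal N_{\eps,h}$ do not destroy the exponential factor. We must also check that the logarithmic width of the neighbourhood is consistent with the validity of the Fermi coordinates, since the principal curvatures decay like $|y|^{-1}$ by \eqref{est-princ-curv}. This consistency is what allows a logarithmically growing slab.
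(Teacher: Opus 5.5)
Your route is the paper's: localise $(1-\chi_2^\natural)S(w)$ to $|t|\gtrsim T(\y):=\frac{\delta}{3\eps}+\frac{\delta\tilde\eta}{\eps}\log(|\eps\y|+2)$, use the exponential convergence of $v_\star$ to $\pm a$ to get $e^{-\gamma T(\y)}\le e^{-\gamma\delta/(3\eps)}(|\eps\y|+2)^{-\gamma\delta\tilde\eta}$, and then compare $|\eps\y|$ with $|\eps x|$. Your formula for $S(w)$ in the transition layer is correct and more explicit than the paper's proof.

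\textbf{The gap is in the localisation step, which is the whole content of the lemma.} You read the cutoff literally ($\chi=0$ on $(-\infty,1)$). Then $\{\chi_2<1\}$ contains the neighbourhood $\{|t|<T+3\}$ of $\Sigma_\eps$. You obtain $|t|\ge T+1$ in Region 2 from ``otherwise $\chi_4=0$'', and you dispose of $\{\chi_4=0\}$ in Region 1. With that convention, however:
\begin{itemize}
\item $\chi_4=0$ on $\{|t|\le T+5\}$, which contains $\Sigma_\eps$. There $w=\mathbb H$ jumps across $\Sigma_\eps$, so $S(w)=0$ is false.
\item $\{\chi_4>0\}\subset\{|t|>T+5\}$ does not meet $\{\chi_2<1\}\subset\{|t|<T+4\}$, so your Regions 2 and 3 would be empty.
\end{itemize}
The definitions $w=\chi_4^\natural U^\natural+(1-\chi_4^\natural)\mathbb H$ and $\chi_1\chi_2=\chi_1$ only make sense with the cutoff reversed: $\chi=1$ on $(-\infty,1)$ and $\chi=0$ on $(2,\infty)$. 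Then $\chi_i=1$ near $\Sigma_\eps$, and $\chi_i^\natural$ is smooth across $\partial\mathcal N_{\eps,h}$.

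With this convention the lower bound on $|t|$ comes from the other factor. Indeed, $1-\chi_2\neq0$ forces $|t|\ge T+3$, while $\chi_4=1$ for $|t|\le T+5$. The set $\{\chi_4^\natural=0\}$ stays away from $\Sigma_\eps$, where $\mathbb H$ is locally constant. Hence $(1-\chi_2^\natural)S(w)$ is supported in $\{T+3\le|t|\le T+6\}$, and your Region 2 and 3 estimates apply as written.

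Two smaller points.
\begin{itemize}
\item $\tilde\eta$ need not be small for the comparison. On the support $|\eps x-\eps\y|\le C(1+\tilde\eta\log(|\eps\y|+2))$, and since $\log$ is sublinear, $|\eps x|+2$ and $|\eps\y|+2$ are comparable for every fixed $\tilde\eta$, with constants independent of $\eps$. Requiring smallness would in fact clash with the Remark after the lemma, which needs $\tilde\eta$ large.
\item \eqref{est-error} only gives $\gamma<\sqrt{W''(a)}$, so the exact rate $\sqrt2$ requires the sharp decay of each term. For example, the $\eta$-terms are $O(\eps^2t^2|A_\Sigma|^2e^{-\sqrt2|t|})$, with $\eps^2t^2\lesssim\log^2(|\eps\y|+2)$ on the support, which $|A_\Sigma|^2$ absorbs. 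Otherwise you get the lemma with any $\gamma<\sqrt{W''(a)}$ in place of $\sqrt2$, which is all that is used afterwards.
\end{itemize}
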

\begin{remark}
As a consequence of Lemma \ref{lemma-error-far}, we need to take $\tilde{\eta}\ge -\frac{\bar{\mu}\delta}{\sqrt{2}}$, with $\bar{\mu}:=\min\{-3,2\nu\}<0$\color{black}, in order to get $$|(1-\chi^\natural_2)S(w)|\le C e^{-\frac{\sqrt{2}\delta}{3\eps}}(|\eps x|^2+2)^{\bar{\nu}/2}.$$
\end{remark}
\begin{proof}
Using that $1-\chi^\natural_2$ is supported in the region where $$\frac{\delta}{3\eps}+\tilde{\eta}\log(|\eps \y|+2)+3<|t|<\frac{\delta}{3\eps}+\tilde{\eta}\log(|\eps \y|+2)+4$$
we have
\begin{equation}\notag
|(1-\chi^\natural_2)S(w)|\le Ce^{-\sqrt{2}\left(\frac{\delta}{3\eps}+\tilde{\eta}\log(|\eps \y|+2)\right)}\le Ce^{-\frac{\sqrt{2}\delta}{3\eps}}(|\eps \y|+2)^{-\sqrt{2}\tilde{\eta}}
\end{equation}
To conclude the proof, we note that
\begin{equation}\notag
\begin{aligned}
|\eps \y|&=|\eps x-\eps(t+h(\eps \y))\nu_\Sigma(\eps \y)|\ge |\eps x|-\eps|t+h(\eps \y)|\\
&\ge|\eps x|-\eps\delta\left(\frac{1}{2\eps}+\frac{1}{3}\right)-\eta\log(|\eps \y|+2)-4\ge c_1|\eps x|-c_2|\eps \y|,
\end{aligned}
\end{equation}
for some constants $c_i>0$.
\end{proof}
\begin{proposition}
Let $\alpha\in(0,\frac{1}{2}),\,\gamma\in(0,\sqrt{W''(a)}),\,C_1,\,C_2>0$ be fixed constants. Let $h\in C^{2,\alpha}_\nu(\Sigma)$ be such that
$$\|h\|_{\mathcal{C}^{2,\alpha}_{\nu}(\Sigma)}<C_1\eps.$$
Let $\phi\in X^{\sharp,\alpha}_{\bar{\mu},\gamma}$ be such that 
$$\|\phi\|_{X^{\sharp,\alpha}_{\bar{\mu},\gamma}}<C_2\eps^3,$$ 
Then there exists a unique solution $\psi=\psi(\phi,\bar{h})\in X^{\natural,\alpha}_{\bar{\mu}}$ to equation (\ref{eq-far}) satisfying
\begin{equation}\notag
\begin{aligned}
\|\psi(\phi,h)\|_{{\tt X}^{\natural,\alpha}_{\bar{\mu}}}  &\le C_3 e^{-\frac{\beta}{\eps}},\\
\|\psi(\phi^1,h)-\psi(\phi^2,h)\|_{{\tt X}^{\natural,\alpha}_{\bar{\mu}}}&\le ce^{-\frac{\beta}{\eps}}\|\phi^1-\phi^2\|_{{\tt X}^{\sharp,\alpha}_{\bar{\mu}, \rho}}\\
\|\psi(\phi,h^1)-\psi(\phi,h^2)\|_{{\tt X}^{\natural,\alpha}_{\bar{\mu}}}&
\le ce^{-\frac{\beta}{\eps}}\|h^1-h^2\|_{C^{2,\alpha}_{\nu}(\Sigma)}
\end{aligned}
\end{equation}
for some $c,\,C_3,\,\beta>0$. 
\label{prop_psi}
\end{proposition}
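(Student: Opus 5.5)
We solve \eqref{eq-far} as a fixed point problem for $\psi$ with $\phi$ and $h$ frozen. Set
$$
\mathcal{L}\psi:=-\Delta\psi+V(x)\psi,\qquad V:=(1-\chi^\natural_1)W''(w)+\chi^\natural_1 W''(a),
$$
so that \eqref{eq-far} reads
$$
\mathcal{L}\psi=-N(\psi),\qquad N(\psi):=(1-\chi^\natural_2)S(w)+(1-\chi^\natural_1)Q_w(\chi^\natural_2\phi^\natural+\psi)-2\nabla\phi^\natural\cdot\nabla\chi^\natural_2-\Delta\chi^\natural_2\,\phi^\natural.
$$
On the support of $1-\chi^\natural_1$ we are at Fermi distance at least $\frac{\delta}{3\eps}$ from $\Sigma_\eps$, where $w=\pm a$ up to exponentially small errors. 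On the support of $\chi^\natural_1$ we have $V=W''(a)$. Hence $V\ge W''(a)/2>0$ on $\R^{N+1}$ for $\eps$ small.

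\textbf{Step 1: invertibility of $\mathcal{L}$ in weighted spaces.} We show that for every $g\in{\tt Y}^{\natural,\alpha}_{\bar\mu}$ there is a unique $\psi\in{\tt X}^{\natural,\alpha}_{\bar\mu}$ with $\mathcal{L}\psi=g$ and
$$
\|\psi\|_{{\tt X}^{\natural,\alpha}_{\bar\mu}}\le c\|g\|_{{\tt Y}^{\natural,\alpha}_{\bar\mu}}.
$$
Existence comes from solving on balls $B_R$ with zero boundary data, where the operator is coercive because $V>0$, and letting $R\to\infty$. Uniqueness and the $L^\infty$ bound come from a barrier. The weight $\omega(x)=(|\eps x|^2+2)^{\bar\mu/2}$ satisfies $|\Delta\omega|\le c\eps^2\omega$, so $\mathcal{L}(M\omega)\ge \frac{W''(a)}{4}M\omega$ for $\eps$ small. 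Comparison then gives $|\psi|\le c\|g\|_{\infty,\bar\mu}\,\omega$. Local Schauder estimates on unit balls upgrade this to the full ${\tt X}^{\natural,\alpha}_{\bar\mu}$ norm, since $\omega$ is essentially constant on unit balls. $S$-invariance is preserved by uniqueness, because $w$, the cutoffs, $\phi$ and $h$ are all $S$-invariant.

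\textbf{Step 2: estimate the source terms and run the contraction.} By Lemma \ref{lemma-error-far} and the remark after it,
$$
\|(1-\chi^\natural_2)S(w)\|_{{\tt Y}^{\natural,\alpha}_{\bar\mu}}\le Ce^{-\frac{\sqrt2\delta}{3\eps}}.
$$
The terms $\nabla\phi^\natural\cdot\nabla\chi^\natural_2$ and $\Delta\chi^\natural_2\phi^\natural$ are supported where $|t|\ge\frac{\delta}{3\eps}+1$. There the weight $\cosh(t)^{\gamma}$ in the $X^{\sharp}$ norm, with $\gamma<0$ understood as decay, produces a factor $e^{-\gamma\delta/(3\eps)}$. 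Together with the comparability of $|\eps\y|$ and $|\eps x|$ shown in the proof of Lemma \ref{lemma-error-far}, this gives a bound $C C_2\eps^3e^{-\beta/\eps}\|\phi\|$. The nonlinear term $(1-\chi^\natural_1)Q_w(\cdot)$ is quadratic in its argument. On its support $\chi^\natural_2\phi^\natural$ is exponentially small, so it is a contraction on the ball $\{\|\psi\|_{{\tt X}^{\natural,\alpha}_{\bar\mu}}\le C_3e^{-\beta/\eps}\}$. Because $\bar\mu<0$, products gain weight and stay in ${\tt Y}^{\natural,\alpha}_{\bar\mu}$. Banach's fixed point theorem then gives existence, uniqueness and the first estimate with $\beta<\min\{\sqrt2\delta/3,\gamma\delta/3\}$.

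\textbf{Step 3: Lipschitz dependence.} The dependence on $\phi$ is linear, up to the quadratic term, and enters only through the cutoff region, so the difference estimate follows from the fixed point identity and Step 1. The dependence on $h$ enters through the shifted Fermi map $X_{\eps,h}$ in $\phi^\natural$, $\chi^\natural_i$, $w$ and $U^\natural$. Differentiating in $h$ costs at most a polynomial factor in $\eps^{-1}$, and every such term is supported where the exponential factor $e^{-\beta'/\eps}$ is available, so the factor is absorbed after slightly decreasing $\beta$.

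The main obstacle is Step 1 together with the bookkeeping in the cutoff region. We must check that the polynomial weight $(|\eps x|^2+2)^{\bar\mu/2}$ and the transition between Fermi and Euclidean coordinates lose only constants. This is exactly why the logarithmic growth $\tilde\eta\log(|\eps\y|+2)$ of the tube was built in, and the choice of $\tilde\eta$ in the remark after Lemma \ref{lemma-error-far} is what makes it work.
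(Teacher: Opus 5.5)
Your proposal is correct and follows essentially the paper's route: a right inverse for $-\Delta+V$ in the weighted spaces ${\tt X}^{\natural,\alpha}_{\bar\mu}$, ${\tt Y}^{\natural,\alpha}_{\bar\mu}$, followed by a contraction argument driven by Lemma \ref{lemma-error-far}. The paper imports that right inverse from Proposition 5.2 of \cite{agudelo2022doubling}, whereas you rederive it via coercivity on balls, the barrier $(|\eps x|^2+2)^{\bar\mu/2}$ and local Schauder estimates; beyond that, your version is more careful than the paper's sketch in two respects: you read the weight $\cosh(t)^{-\gamma}$ as decay and take $\beta<\min\{\sqrt2\delta/3,|\gamma|\delta/3\}$ to accommodate the $\phi$-cutoff terms (which the paper's ball of radius $C_3e^{-\sqrt2\delta/(3\eps)}$ does not), and you sketch the Lipschitz estimates that the paper omits.
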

The proof of Proposition \ref{prop_psi} basically consists of two steps. First we consider the linear problem
\begin{equation}
\label{eq-far-lin}
-\Delta \psi+V_\eps(x)\psi=g\qquad\text{in $\R^{N+1}$,}
\end{equation}
where $V_\eps(x):=(1-\chi^\natural_1)W''(w)-\chi^\natural_1 W''(a)$ and we find a solution $\psi={\tt F}_3(g)\in {\tt X}^{\natural,\alpha}_\mu$, for any $g\in {\tt Y}^{\natural,\alpha}_\mu$ and $\mu<0$. In other words, we construct a right inverse of the operator $-\Delta+V_\eps$ in the suitable spaces. Then we use this right inverse to solve (\ref{eq-far}) by a fixed point argument.
\begin{lemma}
\label{lemma-eq-far-lin}
Let $\mu<0,\,\alpha\in(0,\frac{1}{2})$ and $g\in {\tt Y}^{\natural,\alpha}_\mu$. Then there exists a solution $\psi:={\tt F}_1(g)\in {\tt X}^{\natural,\alpha}_\mu$ to (\ref{eq-far-lin}) such that
$$\|\psi\|_{{\tt X}^{\natural,\alpha}_\mu}\le c\|g\|_{{\tt Y}^{\natural,\alpha}_\mu}.$$
\end{lemma}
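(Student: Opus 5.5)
The plan is to use that $V_\eps$ is uniformly positive, build a weighted barrier, and then combine the maximum principle with local Schauder estimates, solving first on balls and letting the radius tend to infinity.

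\emph{Step 1: positivity of the potential.} I read the potential as $V_\eps=(1-\chi^\natural_1)W''(w)+\chi^\natural_1W''(a)$, as it appears in (\ref{eq-far}). The sign in the displayed definition after Proposition \ref{prop_psi} looks like a typo. On the support of $1-\chi^\natural_1$ we are either outside $\mathcal{N}_{\eps,h}$, where $w=\mathbb{H}=\pm a$, or at distance $|t|\ge \frac{\delta}{3\eps}+\frac{\delta\tilde\eta}{\eps}\log(|\eps\y|+2)$ from $\Sigma_\eps$ in shifted Fermi coordinates. There $U$ is exponentially close to $\pm a$ by (\ref{as-vstar+infty})--(\ref{as-vstar-infty}), and the correction $\eps^2|A_\Sigma|^2\eta$ is $O(\eps^2)$. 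So $w$ is a convex combination of two values that are $O(e^{-c/\eps}+\eps^2)$-close to $\pm a$. Since $W$ is even and $W''(\pm a)>0$, for $\eps$ small we get
$$V_\eps(x)\ge c_0:=\tfrac12 W''(a)>0\qquad\hbox{in }\R^{N+1}.$$
In addition, $\|V_\eps\|_{C^{0,\alpha}}$ is bounded uniformly in $\eps$.

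\emph{Step 2: barrier and a priori bound.} Let $\omega(x):=(2+|\eps x|^2)^{\mu/2}$. A direct computation gives $|\nabla\omega|\le C\eps\,\omega$ and $|\Delta\omega|\le C\eps^2\omega$, with $C$ depending only on $\mu$ and $N$. Hence
$$-\Delta\omega+V_\eps\omega\ge (c_0-C\eps^2)\,\omega\ge \tfrac{c_0}{2}\,\omega$$
for $\eps$ small. For $R>0$, Lax--Milgram gives a unique $\psi_R\in H^1_0(B_R)$ solving $-\Delta\psi_R+V_\eps\psi_R=g$; the form is coercive because $V_\eps\ge c_0$. Elliptic regularity makes $\psi_R$ classical. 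Comparing $\pm\psi_R$ with $\frac{2}{c_0}\|g\|_{\infty,\mu}\,\omega$, the maximum principle (valid since $V_\eps>0$) yields
$$|\psi_R|\le C\|g\|_{\infty,\mu}\,\omega\qquad\hbox{in }B_R,$$
with $C$ independent of $R$ and $\eps$.

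\emph{Step 3: passage to the limit and Schauder estimates.} Interior Schauder estimates on unit balls $B_1(x)\subset B_{R-1}$ give
$$\|\psi_R\|_{C^{2,\alpha}(B_1(x))}\le C\big(\|\psi_R\|_{L^\infty(B_2(x))}+\|g\|_{C^{0,\alpha}(B_2(x))}\big).$$
The constant is uniform in $x$ because the $C^{0,\alpha}$ norm of $V_\eps$ is uniform. The weight $\omega$ varies by at most a bounded factor on $B_2(x)$, since $|\nabla\log\omega|\le C\eps$. Hence the right-hand side is at most $C\|g\|_{{\tt Y}^{\natural,\alpha}_\mu}\,\omega(x)$. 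By Arzel\`a--Ascoli and a diagonal argument, $\psi_R\to\psi$ in $C^2_{loc}$ along a subsequence. The limit solves (\ref{eq-far-lin}) and inherits $\|\psi\|_{{\tt X}^{\natural,\alpha}_\mu}\le c\|g\|_{{\tt Y}^{\natural,\alpha}_\mu}$.

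\emph{Step 4: uniqueness and $S$-invariance.} Any solution with $\|\psi\|_{\infty,\mu}<\infty$ and $\mu<0$ tends to zero at infinity. The maximum principle with $V_\eps>0$ then shows that the homogeneous problem has only the trivial solution in this class, so $\psi$ is unique. Since $\Sigma$, $h$, $\chi_1$ and $\mathbb{H}$ are $S$-invariant, so is $V_\eps$. For $\rho\in S$, $\psi\circ\rho$ solves the same equation with the same bound, hence $\psi\circ\rho=\psi$.

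The only delicate point is Step 1, namely checking carefully that $w$ stays uniformly close to $\pm a$ on the support of $1-\chi^\natural_1$, including the cutoff transition region and large $|\y|$. Once the uniform lower bound $V_\eps\ge c_0$ holds, the remaining steps are standard.
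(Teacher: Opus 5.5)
Your proposal is correct and is essentially the argument the paper defers to (Proposition 5.2 of \cite{agudelo2022doubling}): uniform positivity of $V_\eps$ (with the sign of $\chi_1^\natural W''(a)$ read as in \eqref{eq-far}, as you do), the barrier $(2+|\eps x|^2)^{\mu/2}$ with the maximum principle on balls, and uniform local Schauder estimates to let $R\to\infty$. The only caveat, which the paper shares, is that the uniform $C^{0,\alpha}$ bound on $V_\eps$ that you assert, and hence the $C^{2,\alpha}$ conclusion, needs $W''$ to be H\"older continuous near $\pm a$; this does not follow from $W\in C^2$ alone.
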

The proof of Lemma \ref{lemma-eq-far-lin} follows step by step the proof of Proposition $5.2$ of \cite{agudelo2022doubling}. Finally, the proof of Proposition \ref{prop_psi} follows from a fixed point argument.
\begin{proof}
Equation (\ref{eq-far}) can be formulated as a fixed point problem of the form
$$\psi=-{\tt F}_1\left((1-\chi^\natural_2)S(w)+(1-\chi^\natural_1)Q_w(\chi^\natural_2 \phi^\natural+\psi)-2\nabla \phi^\natural\cdotp
\nabla\chi^\natural_2-\Delta\chi^\natural_2\phi^\natural\right).$$
Using Lemma \ref{lemma-error-far} and the contraction mapping Theorem, it is possible to see that there exists a unique solution in the ball
$$B_{C_3}:=\{\psi\in {\tt X}^{\natural,\alpha}_{\bar{\nu}}:\,\|\psi\|\le C_3 e^{-\frac{\sqrt{2}\delta}{3\eps}}\},$$
provided $C_3>0$ is large enough.
\end{proof}
Now we solve the auxiliary equation (\ref{eq-aux}). First we introduce the spaces
\begin{equation}\notag
\begin{aligned}
X^{\sharp,\alpha}_{\bot,\mu,\gamma}&:=\left\{\phi\in X^{\sharp,\alpha}_{\mu,\gamma}:\,\int_\R \phi(\y,t)v'_\star(t)dt=0,\,\forall\y\in\Sigma_\eps\right\},\\
Y^{\sharp,\alpha}_{\bot,\mu,\gamma}&:=\left\{\phi\in Y^{\sharp,\alpha}_{\mu,\gamma}:\,\int_\R \phi(\y,t)v'_\star(t)dt=0,\,\forall\y\in\Sigma_\eps\right\},
\end{aligned}
\end{equation}
for $\gamma\in(-\sqrt{W''(a)},0)$ and $\mu\in\R$.
\begin{proposition}
\label{prop-phi}
Let $\alpha\in(0,\frac{1}{2}),\,\gamma\in (-\sqrt{W''(a)},0),\,C_2>0$ be fixed constants. Let $h\in C^{2,\alpha}_\nu(\Sigma)$ be such that
$$\|h\|_{\mathcal{C}^{2,\alpha}_{\nu}(\Sigma)}<C_1\eps.$$
Then there exists a unique solution $\phi=\phi(h)\in X^{\sharp,\alpha}_{\bot,\bar{\mu},\gamma}$ to the auxiliary equation (\ref{eq-aux}) such that
\begin{equation}\notag
\begin{aligned}
\|\phi(h)\|_{X^{\sharp,\alpha}_{\bar{\mu},\gamma}}&\le \Lambda_1\eps^3\\
\|\phi(h^1)-\phi(h^2)\|_{X^{\sharp,\alpha}_{\bar{\mu},\gamma}}&\le c\eps^3\|h^1-h^2\|_{C^{2,\alpha}_{\nu}(\Sigma)},
\end{aligned}
\end{equation}
for some $c,\,C_2>0$.
\end{proposition}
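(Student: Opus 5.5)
The plan follows the standard scheme: first invert the linear operator on the orthogonal space, then close by a contraction. The linear step is a version of Proposition 5.3 of \cite{agudelo2022doubling}. For $f\in Y^{\sharp,\alpha}_{\bot,\mu,\gamma}$ we want a unique solution $\phi=\mathtt{F}_2(f)\in X^{\sharp,\alpha}_{\bot,\mu,\gamma}$ of
\[
-\Delta_{\Sigma_\eps}\phi-\partial_t^2\phi+W''(v_\star)\phi=f\quad\text{in }\Sigma_\eps\times\R,
\]
with $\|\phi\|_{X^{\sharp,\alpha}_{\mu,\gamma}}\le c\|f\|_{Y^{\sharp,\alpha}_{\mu,\gamma}}$ and $c$ independent of $\eps$.

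Existence will come from solving on the truncated domains $\{|\y|<R\}\times(-R,R)$. There we use the coercivity given by the spectral gap lemma of Section 4 (constant $\lambda>0$ on functions orthogonal to $v'_\star$), then let $R\to\infty$. Orthogonality is preserved because integrating the equation against $v'_\star$ gives $-\Delta_{\Sigma_\eps}\int\phi v'_\star=0$.

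The a priori estimate in the weighted norm is proved by contradiction with blow-up. Suppose $\|f_k\|\to0$ while $\|\phi_k\|=1$, and pick points $(\y_k,t_k)$ where the weighted sup is nearly attained. We translate to these points. Because $\mu$ and $\gamma$ are fixed while $\eps\to0$ or the points escape, the weights are slowly varying on unit scales, and $\Sigma_\eps$ converges locally to a hyperplane $\R^N$. There are two cases.
\begin{itemize}
\item If $|t_k|\to\infty$, the limit solves $-\Delta\phi+W''(\pm a)\phi=0$ with growth at most $e^{|\gamma||t|}$, where $|\gamma|<\sqrt{W''(a)}$. Barriers of the form $\cosh(t)^{-\gamma}$ rule this out; equivalently, the maximum principle with barriers gives the estimate directly for large $|t|$.
\item If $t_k$ stays bounded, the limit is a bounded solution of $-\Delta_{\R^N}\phi-\partial_t^2\phi+W''(v_\star)\phi=0$ that is orthogonal to $v'_\star$. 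Fourier transforming in $\R^N$ and applying the gap lemma, or using the Liouville-type argument of \cite{del2011giorgi}, shows $\phi\equiv0$, which is a contradiction.
\end{itemize}
Schauder estimates on the cubes $I_{\y,t}$ then upgrade the sup bound to the full $X^{\sharp,\alpha}$ norm.

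For the nonlinear step, write the auxiliary equation (\ref{eq-aux}) as the fixed point problem
\[
\phi=\mathcal{T}(\phi):=\mathtt{F}_2\big(\Pi^\perp\mathcal{F}_\eps(\cdot,\phi,h)\big),
\]
where $\Pi^\perp g=g-\frac{\int g v'_\star}{\int (v'_\star)^2}v'_\star$. This projection removes exactly the multiplier $\lambda(\y)v'_\star$ from (\ref{def-lambda}), and $\psi=\psi(\phi,h)$ is supplied by Proposition \ref{prop_psi}. We then estimate the pieces of $\mathcal{F}_\eps$.
\begin{itemize}
\item \emph{Error term.} The term $\eps^2J_\Sigma h\,v'_\star$ lies in the kernel direction and is removed by $\Pi^\perp$. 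By (\ref{est-error}), $E_1+E_2$ is bounded by $c\eps^3e^{-\gamma|t|}(1+|\eps\y|)^{\bar\mu}$, using $\bar\mu=\min\{-3,2\nu\}$ and $\eps^4\le\eps^3$. The same bound holds in the H\"older norm after differentiating the expansions. So $\|\Pi^\perp S(U)\|\le c\eps^3$, with constant depending on $C_1$.
\item \emph{$B(\phi)$.} The coefficients of $-\Delta+\Delta_{\Sigma_\eps}+\partial_t^2$ in (\ref{Laplacian_shifted_Fermi}) are $O(\eps|t|+\eps^2)$ times weights that are bounded on the support of $\chi_3$. Likewise $W''(U)-W''(v_\star)=O(\eps^2)$. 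Hence $\|B(\phi)\|\le c\eps\,\|\phi\|$ after absorbing the $\eps|t|$ factor into a slightly smaller decay exponent; I expect this step to need some care.
\item \emph{$\psi^\sharp$ and quadratic terms.} The $\psi^\sharp$ terms are $O(e^{-\beta/\eps})$. The quadratic term gives $\|Q_U(\phi+\psi^\sharp)\|\le c\|\phi\|^2+e^{-\beta/\eps}$.
\end{itemize}

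These bounds show that $\mathcal{T}$ maps the ball $\{\|\phi\|\le\Lambda_1\eps^3\}$ of $X^{\sharp,\alpha}_{\bot,\bar\mu,\gamma}$ into itself when $\Lambda_1$ is large and $\eps$ is small, and that it is a contraction with constant $O(\eps)$. This gives existence, uniqueness and the first estimate. For the Lipschitz dependence on $h$, every term of $\mathcal{F}_\eps$ depends smoothly on $h$ through $E_1$, $E_2$, the coefficients of the Laplacian, and $\psi$ (whose dependence is controlled by Proposition \ref{prop_psi}). Each such difference carries at least a factor $\eps^2$ times $\|h^1-h^2\|$, and in fact $\eps^3$ for the part that survives $\Pi^\perp$. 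Together with the contraction, this yields $\|\phi(h^1)-\phi(h^2)\|\le c\eps^3\|h^1-h^2\|_{C^{2,\alpha}_\nu}$.

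The main obstacle is the uniform linear estimate in the weighted norms with the $\cosh(t)^{-\gamma}$ weight, specifically the blow-up case where the limit surface is a hyperplane. One must also check that the weight $(|\eps\y|^2+2)^{-\mu/2}$ is compatible with $\Delta_{\Sigma_\eps}$, whose commutator is only $O(\eps)$.
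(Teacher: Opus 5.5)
Your architecture matches the paper's. You invert $-\Delta_{\Sigma_\eps}-\partial_t^2+W''(v_\star)$ on the orthogonal space by ${\tt F}_2$, which the paper only cites, and close by a contraction in a ball of radius $\sim\eps^3$. Inserting $\Pi^\perp$ is the right way to realise the multiplier $\lambda(\y)v'_\star$, which the paper's displayed fixed-point equation leaves implicit. However, two of the estimates you supply fail as written.

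\textbf{The error term.} You say (\ref{est-error}) gives $|E_1+E_2|\le c\eps^3e^{-\gamma|t|}(1+|\eps\y|)^{\bar\mu}$ ``using $\bar\mu=\min\{-3,2\nu\}$'', but the comparison runs the wrong way. Since $\nu\ge-1$, we have $\bar\mu=-3\le 2\nu$. A finite $Y^{\sharp,\alpha}_{\bar\mu,\gamma}$-norm therefore demands the faster decay $(1+|\eps\y|)^{-3}$, and the bound $\eps^4(1+|\eps\y|)^{2\nu}$ for $E_1$ does not give it.
\begin{itemize}
\item $\Pi^\perp$ does not help: the leading term $-\eps^2|\nabla_\Sigma h|^2v''_\star$ of $E_1$ is orthogonal to $v'_\star$, because $\int_\R v''_\star v'_\star\,dt=0$.
\item Its true size, from $|\nabla_\Sigma h|\le C_1\eps(1+|y|)^{\nu-1}$, is $\eps^4(1+|\eps\y|)^{2\nu-2}$. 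This lies in $Y^{\sharp,\alpha}_{-3,\gamma}$ only if $\nu\le-\frac12$.
\item For $\nu\in(-\frac12,0)$ and $h$ saturating that gradient bound, $\phi$ inherits the slower rate. Its leading part is $\eps^2|\nabla_\Sigma h|^2(\eps\y)\,w_0(t)$, where $-w_0''+W''(v_\star)w_0=v''_\star$ and $w_0\perp v'_\star$.
\end{itemize}
So you must either restrict to $\nu\le-\frac12$, or run the fixed point with a weight exponent $\mu\in[\max\{-3,2\nu-2\},\nu-1]$; the upper bound keeps enough decay for the bifurcation step. The paper's two-line proof passes over the same point, so the defect is inherited, but your inequality is false.

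\textbf{The term $B(\phi)$.} On ${\rm supp}\,\chi_3$ you only have $\eps|t|\le\frac{\delta}{3}+\delta\tilde\eta\log(2+|\eps\y|)+5\eps$. Hence the coefficient $a^{ij}=O(\eps|t+h|(1+|\eps\y|)^{-1})$ of $\partial_{ij}\phi$ is $O(\delta+\eps)$, not $O(\eps)$. Your remedy does not close. Trading $\eps|t|$ for a weaker exponent $\gamma'$ sends $X^{\sharp,\alpha}_{\bar\mu,\gamma}$ into $Y^{\sharp,\alpha}_{\bar\mu,\gamma'}$, and ${\tt F}_2$ then returns you only to $X^{\sharp,\alpha}_{\bar\mu,\gamma'}$. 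The map therefore does not preserve the space, and each iteration loses decay.

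The correct statement is $\|B(\phi)\|_{Y^{\sharp,\alpha}_{\bar\mu,\gamma}}\le C(\delta+\eps)\|\phi\|_{X^{\sharp,\alpha}_{\bar\mu,\gamma}}$; all other coefficients of $B$ are $O(\eps)$. The constant of ${\tt F}_2$ does not depend on $\delta$, so fixing $\delta$ small gives a contraction with constant $O(\delta+\eps)$. The $\eps^3$ bound and the Lipschitz bound in $h$ then follow as you describe.

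\textbf{A smaller point in the linear step.} On boxes truncated in $t$ with Dirichlet data, the boundary terms $\partial_t\phi\,v'_\star|_{t=\pm R}$ destroy the identity $\Delta_{\Sigma_\eps}\int_\R\phi v'_\star\,dt=0$. Truncate only in $\y$, keep $t\in\R$, and minimise over the closed subspace $\{\int_\R\phi v'_\star\,dt=0\}$. Since $f\perp v'_\star$, the weak equation then holds against all test functions.
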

In order to prove Proposition \ref{prop-phi}, first we need to consider the linear problem
\begin{equation}
\label{eq-aux-lin}
\begin{aligned}
-\Delta_{\Sigma_\eps}\phi-\partial^2_t \phi+W''(v_\star(t))\phi&=g \qquad\text{in $\Sigma_\eps\times\R$}\\
\int_\R \phi(\y,t)v'_\star(t)dt&=0\qquad\forall\,\y\in\Sigma_\eps
\end{aligned}
\end{equation}
for right-hand sides $g\in Y^{\sharp,\alpha}_{\bot,\bar{\mu},\gamma}$. 
\begin{lemma}
\label{lemma-eq-aux-lin}
Let $\beta\in(0,1)$, $\gamma\in(-\sqrt{W''(a)},0)$ and $\mu<0$. Then, for any $f\in Y_{\bot,\mu,\gamma}^{\sharp,\beta}$, there exists a unique solution $\phi:={\tt F}_2(f)\in X_{\bot,\mu,\gamma}^{\sharp,\beta}$ to (\ref{eq-aux-lin}). Moreover
$$\|\phi\|_{X^{\sharp,\beta}_{\mu,\gamma}}\le c\|f\|_{Y^{\sharp,\beta}_{\mu,\gamma}}.$$
\end{lemma}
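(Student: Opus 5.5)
We prove Lemma \ref{lemma-eq-aux-lin} in three steps: an $L^2$ theory on bounded domains, a priori weighted $L^\infty$ estimates by a blow-up argument, and Schauder estimates. Uniqueness comes out of the blow-up argument.

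\textbf{Existence on bounded domains.} Fix $R>0$ large and let $\Sigma_{\eps,R}:=\Sigma_\eps\cap B_R(0)$. Consider \eqref{eq-aux-lin} on $\Sigma_{\eps,R}\times(-R,R)$ with Dirichlet boundary conditions, posed in the closed subspace of $H^1_0$ of $S$-invariant functions $\phi$ with $\int_\R\phi v'_\star dt=0$ for a.e. $\y$. Since $v'_\star$ does not vanish on $(-R,R)$, replace it by the first Dirichlet eigenfunction of $L=-\partial_t^2+W''(v_\star)$ on $(-R,R)$; it converges to $v'_\star/\|v'_\star\|_{L^2}$ as $R\to\infty$, so this changes nothing in the limit. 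The nondegeneracy lemma of Section $4$ (the spectral gap $\lambda>0$), applied fibrewise in $t$ and integrated in $\y$, gives coercivity of the quadratic form
\[
\int\big(|\nabla_{\Sigma_\eps}\phi|^2+|\partial_t\phi|^2+W''(v_\star)\phi^2\big)\ \ge\ \lambda\int\phi^2
\]
on the orthogonal space. Since $f$ is orthogonal to $v'_\star$ for each $\y$, Lax--Milgram yields a unique weak solution $\phi_R$. The equation preserves orthogonality: if we test against $v'_\star(t)\zeta(\y)$, the quantity $\int\phi v'_\star\,dt$ satisfies $-\Delta_{\Sigma_\eps}(\cdot)=0$ with zero boundary data.

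\textbf{A priori estimate.} The key step is
\[
\|\phi\|_{\infty,\mu,\gamma}\le c\|f\|_{\infty,\mu,\gamma},
\]
uniformly in $R$ and $\eps$, and we prove it by contradiction. Take sequences $\phi_k,f_k$ with $\|\phi_k\|_{\infty,\mu,\gamma}=1$ and $\|f_k\|\to0$, and points $(\y_k,t_k)$ where the weighted sup is nearly attained. We distinguish cases.

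\emph{Case $|t_k|\to\infty$.} Here $W''(v_\star)\to W''(\pm a)>\gamma^2$. The function $\cosh(t)^{\gamma}(|\eps\y|^2+2)^{\mu/2}$ is a supersolution of $-\Delta_{\Sigma_\eps}-\partial_t^2+W''(v_\star)$ for $|t|$ large, because $\gamma^2<W''(a)$ and $\eps$-derivatives of the polynomial weight cost only $O(\eps^2)$; the $\mu<0$ weight contributes $O(\eps^2)$ thanks to the curvature decay \eqref{est-princ-curv}. The maximum principle in the region $|t|>T_0$ then contradicts the near-attainment of the sup there.

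\emph{Case $t_k$ bounded.} Translate so that $\y_k$ is at the origin and normalize by the weight. Since $\eps\to0$ or $\Sigma_\eps$ is locally flat near $\y_k$ (its curvatures are $O(\eps/(1+\eps|\y|))$), elliptic estimates give local convergence to a bounded solution of
\[
-\Delta_{\R^N}\phi-\partial_t^2\phi+W''(v_\star)\phi=0 \quad\text{in } \R^N\times\R,
\]
orthogonal to $v'_\star$ and nontrivial. The weight ratio stays bounded locally because $\eps$-rescaled polynomial weights are nearly constant on unit balls. Such a solution cannot exist: Fourier transforming in $\R^N$ (or using the standard argument of del Pino--Kowalczyk--Wei), the orthogonality together with $\lambda>0$ forces $\phi\equiv0$, a contradiction. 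Uniqueness follows from the same argument applied to $f=0$.

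\textbf{Passage to the limit and Schauder estimates.} With the uniform bound in hand, local Schauder estimates on the sets $I_{\y,t}$ upgrade to the full $X^{\sharp,\beta}_{\mu,\gamma}$ norm, and $S$-invariance is preserved by uniqueness. Letting $R\to\infty$ along a subsequence (Arzel\`a--Ascoli) gives the solution on $\Sigma_\eps\times\R$; orthogonality passes to the limit by dominated convergence, since $\gamma>-\sqrt{W''(a)}$ keeps $\phi v'_\star$ integrable.

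The main obstacle is the blow-up step when $|\y_k|\to\infty$ with $\eps$ fixed. There one must check that $\Sigma_\eps$ around $\y_k$ converges to a hyperplane. This holds by \eqref{est-princ-curv}, since $|A_{\Sigma_\eps}(\y)|\le c\eps/(1+\eps|\y|)$, and because a uniform injectivity radius is available from (H3).
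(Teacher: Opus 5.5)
Your proposal is correct and matches the standard linear theory that the paper only invokes, by citing Proposition 5.4 of \cite{agudelo2022doubling}: coercivity on the $v'_\star$-orthogonal space from the spectral gap of Section 4, a barrier in $|t|$ plus a blow-up/Liouville argument for the weighted $L^\infty$ bound, then Schauder estimates and exhaustion. Two points should be made explicit: on the truncated domains you must replace $f$ by its projection orthogonal to $Z_R$, which is an error that vanishes as $R\to\infty$ because $f\perp v'_\star$; and uniqueness on all of $\Sigma_\eps\times\R$, where there is no Dirichlet data for the maximum principle, follows because $\psi(\y)=\int_\R\phi^2\,dt$ satisfies $\Delta_{\Sigma_\eps}\psi\ge 2\lambda\psi$ and $\psi\to0$ at infinity (since $\mu<0$), so $\psi\equiv 0$.
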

This is the analogue of Proposition $5.4$ of \cite{agudelo2022doubling}, the proof is identical. Using Lemma \ref{lemma-eq-aux-lin} and a fixed point argument, it is possible to prove Proposition \ref{prop-phi}.
\begin{proof}
Using the right inverse ${\tt F}_2$ constructed in Lemma \ref{lemma-eq-aux-lin}, it is possible to formulate equation (\ref{eq-aux}) as a fixed point problem in the form
$$\phi=-{\tt F}_2\left(B(\phi)+\chi_1(W''(U)-W''(a))\psi^\sharp+\chi_1 Q_{U}(\phi+\psi^\sharp)+S(U)\right).$$
Due to the expansions (\ref{est-error}), (\ref{def-E1}), (\ref{def-E2}) and the contraction mapping Theorem, equation (\ref{eq-aux}) can be uniquely solved in the ball
$$B_{C_2}:=\{\phi\in X^{\sharp,\alpha}_{\bot,\bar{\mu},\rho}:\,\|\phi\|_{X^{\sharp,\alpha}_{\bar{\mu},\gamma}}<C_2\eps^3\}$$
provided $C_2>0$ is large enough.
\end{proof}
In order to conclude the construction of the required solution $u_\eps$ to equation \eqref{allen-cahn}, we need to the bifurcation equation \eqref{eq-bifo} with respect to $h$. Computing $\lambda(\y)$ explicitly (see \ref{eq-bifo}), using (\ref{def-E1}), (\ref{def-E2}) and the properties of $\phi$ and $\psi$ given by Propositions \ref{prop-phi} and \ref{prop_psi}, it is possible to rewrite equation (\ref{eq-bifo}) in the form 
\begin{equation}
\label{eq-bifo-proj}
J_\Sigma h=\mathcal{G}_\eps(y,h)\qquad\text{in }\Sigma.   
\end{equation}
We note that $\mathcal{G}_\eps(\cdotp,h)\in C^{0,\alpha}_{\nu-2}(\Sigma)$ for any $\nu\ge-1$. This estimate comes from the computations. In fact, the largest term in $\eps$ is given by $\eps{\rm tr}(A_\Sigma^3)=O(|y|^{-3})$ as $|y|\to\infty$, while the largest term in $|y|$ is given $|\nabla_\Sigma h|^2=O(|y|^{2(\nu-1)})$ as $|y|\to\infty$.
\begin{proposition}
\label{prop-bifo}
Assume that \eqref{choice-nu} is satisfied
and $\alpha\in (0,\frac{1}{2})$. Then there exists a unique solution $h\in C^{2,\alpha}_{\nu}(\Sigma)$ to the bifurcation equation (\ref{eq-bifo}) such that $\|h\|_{C^{2,\alpha}_{\nu}(\Sigma)}<C_1\eps$, for some $C_1>0$.
\end{proposition}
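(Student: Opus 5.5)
The bifurcation equation will be recast as a fixed point problem for $h$, using the right inverse $\Phi$ of the Jacobi operator from Theorem \ref{prop-right-inverse-Jacobi}. The choice \eqref{choice-nu} guarantees that $\nu>2-N-\bar{\nu}$ and that $\nu$ is not an indicial root of $C$. Since $\Sigma$ is $S$-dilation-nondegenerate, Theorem \ref{prop-right-inverse-Jacobi} then provides a bounded linear map
$$
\Phi: C^{0,\alpha}_{\nu-2}(\Sigma,S)\to C^{2,\alpha}_{\nu}(\Sigma,S), \qquad \|\Phi(f)\|_{C^{2,\alpha}_\nu(\Sigma)}\le c\|f\|_{C^{0,\alpha}_{\nu-2}(\Sigma)},
$$
with $J_\Sigma\Phi(f)=f$. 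Equation \eqref{eq-bifo-proj} is solved if $h$ is a fixed point of
$$
T_\eps(h):=\Phi\big(\mathcal{G}_\eps(\cdot,h)\big)
$$
on the closed ball $B:=\{h\in C^{2,\alpha}_\nu(\Sigma,S):\,\|h\|_{C^{2,\alpha}_\nu(\Sigma)}\le C_1\eps\}$. $S$-invariance is preserved throughout: $\mathcal{G}_\eps$ is built from $S$-invariant data ($\Sigma$, $h$, and $\phi(h)$, $\psi(\phi,h)$, which are $S$-invariant because the spaces in Propositions \ref{prop_psi} and \ref{prop-phi} are).

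\textbf{Step 1 (size of $\mathcal{G}_\eps$).} First I will compute $\lambda(\y)$ from \eqref{def-lambda} by projecting $\mathcal{F}_\eps$ onto $v'_\star$ and dividing by $\eps^2\int_\R(v'_\star)^2$. The term $\eps^2J_\Sigma h\,v'_\star$ gives the left-hand side. The $E_1$ contribution vanishes by the orthogonality in \eqref{est-error}. The $E_2$ contribution, after division by $\eps^2$, is $O(\eps(1+|y|)^{-3})$, and its leading part is $\eps\,{\rm tr}(A_\Sigma^3)\int_\R t^2(v'_\star)^2$, which comes from ${\tt Q}$. The terms $B(\phi)$ and $Q_U(\phi+\psi^\sharp)$, and the $\psi^\sharp$ terms, contribute $O(\eps^{3}\cdot\eps^{-2})$ times weights of order $(1+|y|)^{\bar\mu}$ by Propositions \ref{prop-phi} and \ref{prop_psi}. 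The $\psi$ contributions are in fact exponentially small. I need to check that the weights are compatible: $\bar\mu=\min\{-3,2\nu\}\le\nu-2$ because $\nu\ge-1$. Quadratic terms in $h$, such as $|\nabla_\Sigma h|^2$ and $a^{ij}h_{ij}$, are $O(\eps^2(1+|y|)^{2\nu-2})$, and this is bounded in $C^{0,\alpha}_{\nu-2}$ since $\nu<0$. Altogether I expect
$$
\|\mathcal{G}_\eps(\cdot,h)\|_{C^{0,\alpha}_{\nu-2}(\Sigma)}\le c_0\eps + c(C_1)\eps^2,
$$
where $c_0$ depends only on $\Sigma$ and is independent of $C_1$. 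I will then choose $C_1:=2c\,c_0$ and $\eps$ small, so that $T_\eps(B)\subset B$.

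\textbf{Step 2 (contraction).} For $h^1,h^2\in B$, I will estimate $\mathcal{G}_\eps(\cdot,h^1)-\mathcal{G}_\eps(\cdot,h^2)$. Three sources of dependence on $h$ have to be controlled:
\begin{itemize}
\item the explicit dependence (quadratic terms, and the coefficients ${\tt Q},a^{ij},b^i$ evaluated at $\eps(t+h)$), which is Lipschitz with constant $O(\eps)$;
\item the dependence through $\phi(h)$, which by Proposition \ref{prop-phi} has Lipschitz constant $O(\eps^3)$ in $X^{\sharp}$ and hence $O(\eps)$ after division by $\eps^2$;
\item the dependence through $\psi$, which by Proposition \ref{prop_psi} is exponentially small.
\end{itemize}
Hence $\|T_\eps(h^1)-T_\eps(h^2)\|\le c\eps\|h^1-h^2\|$. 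The Banach fixed point theorem then gives existence, and uniqueness in $B$.

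\textbf{Main obstacle.} The delicate point is Step 1, namely checking that every term of the projected error really lies in $C^{0,\alpha}_{\nu-2}$, including the Hölder seminorms. Two issues require care. The first is translating the weighted sup norms on $\Sigma_\eps\times\R$ back to $\Sigma$ under the rescaling $y=\eps\y$. The second is that the cutoff region grows only logarithmically in $|y|$; this is handled by the exponential decay of $v'_\star$ through the $\cosh(t)^{\gamma}$ weights. I would first verify the weight bookkeeping for the worst terms, ${\tt Q}v'_\star$ and $|\nabla_\Sigma h|^2v''_\star$. The rest then follows by the same pattern.
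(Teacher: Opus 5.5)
Your proposal is correct and follows the same route as the paper. You rewrite \eqref{eq-bifo-proj} as $h=\Phi(\mathcal{G}_\eps(\cdot,h))$ using the right inverse of Theorem~\ref{prop-right-inverse-Jacobi}. You place $\mathcal{G}_\eps(\cdot,h)$ in $C^{0,\alpha}_{\nu-2}(\Sigma)$, using $\nu\ge -1$ for the leading term $\eps\,{\rm tr}(A_\Sigma^3)=O(\eps|y|^{-3})$ and $\nu<0$ for $|\nabla_\Sigma h|^2=O(|y|^{2\nu-2})$. You then close with the contraction mapping principle in the ball of radius $C_1\eps$, supplying bookkeeping the paper leaves implicit.

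One estimate should be sharpened when you write it out. The $\phi$-contributions are $O(\eps^2)$ rather than $O(\eps^3\cdot\eps^{-2})$, because every coefficient of $B$ carries at least one factor of $\eps$. You need this gain for two reasons. First, it makes your constant $c_0$ genuinely independent of $C_1$. Second, it absorbs the factor $\eps^{-\alpha}$ lost when H\"older seminorms on unit balls of $\Sigma_\eps$ are converted into H\"older seminorms on unit balls of $\Sigma$; with only $O(\eps)$ you would get $O(\eps^{1-\alpha})$, which exceeds $C_1\eps$, and the self-map property $T_\eps(B)\subset B$ would fail.
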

\begin{proof}
Using that $\mathcal{G}_\eps(\cdotp,h)\in C^{0,\alpha}_{\nu-2}(\Sigma)$ (see equation \eqref{eq-bifo-proj} for the definition of $\mathcal{G}_\eps(\cdotp,h)$), since $\nu\ge-1$, point $(1)$ of Proposition $3.2$ of \cite{agudelo2025fredholm} and the restrictions about $\nu$, equation (\ref{eq-bifo-proj}) can be written as 
$$h=\Phi(\mathcal{G}_\eps(\cdotp,h)),$$
where $\Phi$ is the right inverse of $J_\Sigma$ constructed in Theorem \ref{prop-right-inverse-Jacobi}. The latter is uniquely solvable in the ball
$$B_{C_1}:=\{h\in C^{2,\alpha}_{\nu}(\Sigma):\,\|h\|_{C^{2,\alpha}_{\nu}(\Sigma)}<C_1\eps\},$$
provided $C_1>0$ is large enough. This can be done applying a fixed point argument.
\end{proof}
\begin{remark}
Note that the solution $h$ found in Proposition \ref{prop-bifo} decays as $|y|\to\infty$, since $\nu<0$. As a consequence, property $(1)$ of Theorem \ref{main-th} is fulfilled.
\end{remark}

\section{Morse index}

Now we deal with the Morse index of the solutions constructed in Theorem \ref{main-th}. In this part we use the conventions and notations introduced in section 5.3. First, we recall the quadratic forms in \eqref{def:quadr_Sigma} and \eqref{def:quadr_sln_u}, namely
$$\mathcal{Q}_\Sigma(\varphi):=\int_\Sigma (|\nabla_\Sigma\varphi|^2-|A_\Sigma|^2\varphi^2)d\sigma$$
defined for $\varphi\in C^\infty_c(\Sigma)$ and
$$\mathbb{Q}_{u_\eps}(\phi):=\int_{\R^{N+1}}(|\nabla \phi|^2+W''(u_\eps)\phi^2) dx$$
defined for $\phi\in C^\infty_c(\R^{N+1})$. Recall also that 
$$Morse(\Sigma):=\sup\left\{\dim(X):\,X\subset C^\infty_c(\Sigma),\mathcal{Q}_{\Sigma}(\varphi)<0,\,\,\forall\,\varphi\in X\setminus\{0\}\right\}$$
and 
$$Morse(u_\eps):=\sup\left\{\dim(Y):\,Y\subset C^\infty_c(\R^{N+1}),\,\Q_{u_\eps}(\phi)<0,\,\,\forall\,\phi\in Y\setminus\{0\}\right\}.$$

We need to prove the inequalities
\begin{equation}\label{ineq-Morse1}
Morse(u_\eps)\ge Morse (\Sigma)
\end{equation}
and
\begin{equation}\label{ineq-Morse2}
Morse(u_\eps)\le Morse (\Sigma).
\end{equation}
We stress that, in order for \eqref{ineq-Morse2} to be true, we need the further assumptions that $\Sigma$ is dilation-nondegenerate and $\bar{\nu}\ge -\frac{N-2}{2}$.\\

Let us first introduce some notation.\\

For $y\in\Sigma$, we set $p_0(y):=(1+|y|)^{-2}$. Note that $p_0>0$ and there exists $c>0$ such that 
$$0\le |A_\Sigma(y)|^2\le cp_0(y),\qquad\forall\,y\in\Sigma.$$ 
Moreover, we introduce the space $$\mathcal{H}(\Sigma):=\{\varphi\in H^1_{loc}(\Sigma):|\nabla_\Sigma\varphi|,\,p_0^{1/2}\varphi\in L^2(\Sigma)\}.$$
which is a Hilbert space with respect to the scalar product
$$\langle \varphi,\psi\rangle_{\mathcal{H}(\Sigma)}:=\int_{\Sigma}\nabla\varphi\cdotp\nabla\psi\,d\sigma+\int_{\R^{N+1}}p_0(y)\varphi\psi\,d\sigma,\qquad\forall\,\varphi,\psi\in\mathcal{H}(\Sigma).$$
Moreover, we introduce the weight function $$p(x):=\chi_2^\natural (p_0(\eps\cdotp))^\natural+(1-\chi_2^\natural)(1+|\eps x|)^{-2}\quad\forall\,x\in\R^{N+1},\quad p_0(y):=(1+|y|)^{-2}\quad\forall\, y\in\Sigma,\,\eps>0.$$

\subsection{Proof of inequality (\ref{ineq-Morse1})}

\begin{lemma}\label{lemma:quasi_autofunzione_Allen-Cahn}
Let $\eps>0$ be small enough and let $\varphi\in C^2_c(\Sigma)$. Set $\phi(\y,t):=\varphi(\eps\y)v'_\star(t)$ for $(\y,t)\in \Sigma_\eps\times \R$. Let $v:=(\phi \chi_3)^\natural$ and $c_\star:=\int_\R (v'_\star)^2$. Then
\begin{enumerate}
\item 
\begin{equation}
\int_\R L(\phi)v'_\star\,dt=-\eps^2c_\star J_\Sigma\varphi(\eps\y)+f_\eps(\eps\y)\varphi(\eps \y)-\eps^2c_\star(a^{ij}\varphi_{ij}+b^i\varphi_i)(\eps\y)\qquad\forall\,\y\in\Sigma_\eps,
\label{pr-Sigma}
\end{equation}
where $f_\eps\in C^{0,\alpha}(\Sigma)$ is such that
\begin{equation}\label{est-f-eps}
|f_\eps(y)|\le c\eps^3(1+|y|)^{2(\nu-1)}\qquad\forall\, y\in\Sigma.    
\end{equation}
\item
\begin{equation}
\label{exp-Qu}
\mathbb{Q}_{u_\eps}(v)=\eps^{2-N} c_\star \mathcal{Q}_\Sigma(\varphi)+o_\eps(\eps^{2-N})\|\varphi\|^2_{\mathcal{H}(\Sigma)}.
\end{equation}
\end{enumerate}
    
\end{lemma}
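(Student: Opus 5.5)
For part (1), the plan is to write $L$ for the linearized operator $-\Delta+W''(u_\eps)$ expressed in the shifted Fermi coordinates $(\y,t)$, using the expansion \eqref{Laplacian_shifted_Fermi} with $h$ the solution given by Proposition \ref{prop-bifo}. We apply it to $\phi=\varphi(\eps\y)v'_\star(t)$. Since $-\partial_t^2 v'_\star+W''(v_\star)v'_\star=0$, the only surviving terms are three.

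\begin{itemize}
\item[(a)] The tangential term $-\Delta_{\Sigma_\eps}\phi=-\eps^2(\Delta_\Sigma\varphi)(\eps\y)v'_\star$.
\item[(b)] The potential term $(W''(u_\eps)-W''(v_\star))\phi$. Here $u_\eps=U+\chi_2\phi(h)+\psi$ near $\Sigma_\eps$, so $W''(u_\eps)-W''(v_\star)=W'''(v_\star)\big(\eps^2|A_\Sigma|^2\eta+\phi(h)+\psi^\sharp\big)+\dots$.
\item[(c)] The curvature and shift terms of \eqref{Laplacian_shifted_Fermi} acting on $v'_\star$. These are $\eps^2 t|A_\Sigma|^2 v''_\star$, $\eps^2 J_\Sigma h\, v''_\star$, $2\eps\nabla_\Sigma h\cdot\nabla_{\Sigma}\varphi\,v''_\star$, $\eps{\tt Q}v''_\star$, $(a^{ij}\partial_{ij}+\eps b^j\partial_j)\phi$, and so on.
\end{itemize}

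We then multiply by $v'_\star$ and integrate in $t$.

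\begin{itemize}
\item The $|A_\Sigma|^2$ contributions combine into $-\eps^2 c_\star|A_\Sigma|^2\varphi$, which, together with (a), gives $-\eps^2c_\star J_\Sigma\varphi$. This uses $\int tv''_\star v'_\star=-\tfrac12 c_\star$ and the identity $\int W'''(v_\star)\eta (v'_\star)^2=-\int (tv'_\star)'' v'_\star$-type relations obtained by differentiating the $\eta$-equation. Equivalently, the $\eta$ correction is exactly what cancels the odd-in-$t$ defect.
\item The terms with $\int v''_\star v'_\star=0$ vanish by oddness. This includes $\eps^2J_\Sigma h\,v''_\star$ and the $\nabla h\cdot\nabla\varphi$ term.
\item The $a^{ij},b^i$ terms give exactly $-\eps^2c_\star(a^{ij}\varphi_{ij}+b^i\varphi_i)$, up to parts that are $t$-odd.
\end{itemize}

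Everything left over is a multiple of $\varphi(\eps\y)$ with coefficient $f_\eps$. We bound it using $\|h\|_{C^{2,\alpha}_\nu}\le C_1\eps$, $\|\phi(h)\|\le C\eps^3$ and $|\psi|\le e^{-\beta/\eps}$. The worst term is $\eps^2|\nabla h|^2\sim\eps^4(1+|y|)^{2(\nu-1)}$, and also $\eps^3$-size terms times $(1+|y|)^{\bar\mu}$ with $\bar\mu\le 2\nu$. This yields \eqref{est-f-eps}.

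For part (2), we write $\mathbb{Q}_{u_\eps}(v)=\int vL v\,dx$ (legitimate since $v$ is compactly supported) and change to Fermi coordinates. The volume element is $\eps^{-N}(1+O(\eps|t|/(1+|y|)))\,d\sigma(y)\,dt$ after rescaling $y=\eps\y$. The cutoff $\chi_3$ only acts where $|t|\gtrsim\delta/\eps$, so its contribution is $O(e^{-c/\eps})\|\varphi\|^2_{\mathcal H}$. Splitting $L\phi=\big(\int L\phi\, v'_\star\big)v'_\star/c_\star+(L\phi)^\perp$, the projected part gives, by (1) and after integrating $\varphi\,(\cdot)$ over $\Sigma$,
\[
\eps^{2-N}c_\star\Big(\mathcal{Q}_\Sigma(\varphi)-\int_\Sigma(a^{ij}\varphi_{ij}+b^i\varphi_i)\varphi\Big)+\eps^{-N}\int_\Sigma f_\eps\varphi^2 .
\]

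Each error term must then be controlled by $o(1)\,\eps^{2-N}\|\varphi\|^2_{\mathcal H}$. For $f_\eps$ we use $(1+|y|)^{2(\nu-1)}\le p_0$, since $\nu\le0$, so this term is $O(\eps^{3-N})\int p_0\varphi^2$. For the $a^{ij},b^i$ terms, after integrating by parts once, the bounds $|a^{ij}|\le C\eps(1+|y|)^{-1}$ and $|b^i|\le C\eps(1+|y|)^{-2}$ give $O(\eps)\int(|\nabla\varphi|^2+p_0\varphi^2)$. This uses that the $z$-dependence is weighted by $v'_\star(t)^2$, which decays exponentially, so $|z|=\eps|t+h|$ is effectively $O(\eps)$. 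The orthogonal remainder $(L\phi)^\perp$ contributes nothing after pairing with $\phi=\varphi v'_\star$, apart from the volume-element correction, which is again $O(\eps)\cdot\eps^{2-N}\|\varphi\|^2_{\mathcal H}$.

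The main obstacle is the exact bookkeeping in (1) showing that the $O(\eps^2)$ terms coming from $\eta$, from $t|A_\Sigma|^2v''_\star$ and from the Jacobian combine precisely into $-c_\star|A_\Sigma|^2\varphi$ with no leftover of size $\eps^2 p_0$. The approach is to differentiate the equation of $\eta$ and integrate by parts against $v'_\star$. Any leftover at order $\eps^2p_0$ would spoil \eqref{exp-Qu}, so this identity has to come out exactly.
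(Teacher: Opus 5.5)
Your proposal follows the paper's proof: expand $L(\varphi(\eps\y)v'_\star)$ via \eqref{Laplacian_shifted_Fermi}, project onto $v'_\star$ using the differentiated $\eta$-equation, and then test \eqref{pr-Sigma} against $\varphi$ over $\Sigma$. Your handling of $\chi_3$, of the volume element and of the orthogonal remainder in (2) is more careful than the paper's.

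The identity you leave open closes exactly. Testing $-\eta'''+W''(v_\star)\eta'+W'''(v_\star)v'_\star\eta=-(tv'_\star)'$ with $v'_\star$ gives $\int_\R W'''(v_\star)\eta(v'_\star)^2\,dt=-\int_\R(tv'_\star)'v'_\star\,dt=-\frac12 c_\star$; the derivative on $tv'_\star$ is a first derivative, not a second as you wrote. This term adds to, rather than cancels, $\int_\R tv''_\star v'_\star\,dt=-\frac12 c_\star$, giving exactly $-c_\star|A_\Sigma|^2$. The Jacobian plays no role in (1), and in (2) it is only a lower-order correction, since $L\phi=O(\eps^2)$ and $H_\Sigma=0$.
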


\begin{proof}
\begin{enumerate}
\item Using the expansion of the Laplacian in shifted Fermi coordinates (\ref{Laplacian_shifted_Fermi}), we compute 
\begin{equation}\label{exp-L-psi-parallel}
\begin{aligned}
\Delta_{\y,t}(\varphi v'_\star)-W''(v_\eps)\varphi v'_\star&=\eps^2\Delta_\Sigma \varphi v'_\star+\varphi v'''_\star-\eps^2  \varphi J_\Sigma h v''_\star-\eps^2|A_\Sigma|^2\varphi tv''_\star-2\eps^2\nabla_\Sigma h\cdotp\nabla_\Sigma\varphi v''_\star\\
&+\eps^2|\nabla_\Sigma h|^2 \varphi v'''_\star-\eps\Q\varphi v''_\star+\eps^2(a^{ik}\varphi_{ik}+b^i\varphi_i)v'_\star-\eps^2(a^{ik}h_{ik}+b^i h_i)\varphi v''_\star\\
&-2\eps^2 a^{ik}\varphi_i h_k v''_\star+\eps^2 a^{ik}h_i h_k \varphi v'''_\star-W''(v_\star)\varphi v'_\star-\eps^2|A_\Sigma|^2W'''(v_\star)\eta\varphi v'_\star\\
&-(W''(v_\eps)-W''(v_\star)-\eps^2|A_\Sigma|^2 W'''(v_\star)\eta)\varphi v'_\star,
\end{aligned}
\end{equation}
where $\Delta_{\y,t}$ represents the expression of the Laplacian in shifted Fermi coordinates.\\

Moreover, differentiating the equation satisfied by $\eta$, we can see that
$$-\eta'''+W''(v_\star)\eta'+W'''(v_\star)v'_\star\eta=-tv''_\star-v'_\star,$$
so that multiplying by $v'_\star$ and integrating over $\R$ we have
$$\int_\R W'''(v_\star)\eta (v'_\star)^2 dt=-\int_\R tv''_\star v'_\star dt-\int_\R (v'_\star)^2 dt=-\frac{1}{2}\int_\R (v'_\star)^2 dt.$$
In addition, using the form of the remainder in the construction of the solution to equation \eqref{allen-cahn}, we have
\begin{equation}\notag
\int_\R (W''(v_\eps)-W''(v_\star)-\eps^2|A_\Sigma|^2 W'''(v_\star)\eta)\varphi_R(\eps\y) v'_\star\,dt=: \tilde{f}_\eps(\eps\y)\varphi_R(\eps\y),
\end{equation}
where
\begin{equation}\notag
|\tilde{f}_\eps(y)|\le c\eps^3(1+|y|)^{-3},\qquad\forall\,y\in\Sigma,
\end{equation}
for some constant $c>0$ independent of $R$.\\
As a consequence, multiplying the expansion of $L(\varphi(\eps\cdotp)v'_\star)$ by $v'_\star$, integrating by parts and using the decay of the geometric quantities along $\Sigma$, we conclude the proof of \eqref{pr-Sigma}.

\item Multiplying \eqref{pr-Sigma} by $\varphi(\eps\y)$ and multiplying over $\Sigma$, we can see that
\begin{equation}\notag
\begin{aligned}
\Q_{u_\eps}(\phi)&=-\eps^2 c_\star \int_{\Sigma_\eps} J_\Sigma \varphi(\eps\y)\varphi(\eps\y) d\sigma(\y)+\int_{\Sigma_\eps}f_\eps(\eps\y)\varphi(\eps\y)^2d\sigma\\
&-\eps^2\int_{\Sigma_\eps} (a^{ij}\varphi_{ij}+b^i\varphi_i)\varphi(\eps\y)\varphi(\eps\y)d\sigma+o_\eps(\eps^{2-N})\|\varphi\|_{\mathcal{H}(\Sigma)}\\
&=\eps^{2-N} c_\star \mathcal{Q}_\Sigma(\varphi)+o_\eps(\eps^{2-N})\|\varphi\|^2_{\mathcal{H}(\Sigma)}
\end{aligned}
\end{equation}
The integrations by parts over $\Sigma$ are justified by the fact that $\varphi$ is compactly supported. The terms involving $a^{ij}$ and $b^i$ fulfill
$$\int_{\Sigma} (a^{ij}\varphi_{ij}+b^i\varphi_i)(y)\varphi(y)d\sigma\le c\eps\|\nabla_\Sigma\varphi\|^2_{L^2(\Sigma)},$$
thanks to the decays of such coefficients in $\eps$ and $\y$ (see \eqref{est_remainder_laplacian}). The term involving $f_\eps$ is estimated by using \eqref{est-f-eps}. A similar argument is used in \cite{del2013entire} and \cite{agudelo2015solutions}, 
\end{enumerate}
\end{proof}

\begin{proposition}
${\rm Morse}(u_\eps)\ge {\rm Morse}(\Sigma)$.
\end{proposition}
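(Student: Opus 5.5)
The plan is to transplant negative directions of $\mathcal{Q}_\Sigma$ into negative directions of $\mathbb{Q}_{u_\eps}$ through the map $\varphi\mapsto v_\varphi:=(\chi_3\,\varphi(\eps\cdot)v'_\star)^\natural$ of Lemma \ref{lemma:quasi_autofunzione_Allen-Cahn}, and then use the expansion \eqref{exp-Qu}. If ${\rm Morse}(\Sigma)=0$ there is nothing to prove. Otherwise, fix any integer $k\le {\rm Morse}(\Sigma)$ with $k<\infty$; the case ${\rm Morse}(\Sigma)=\infty$ then follows by letting $k\to\infty$. By definition there is a $k$-dimensional subspace $X\subset C^\infty_c(\Sigma)$ on which $\mathcal{Q}_\Sigma$ is negative on $X\setminus\{0\}$.

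\textbf{Step 1: uniform negativity.} Since $X$ is finite dimensional, all norms on it are equivalent. The unit sphere $\{\varphi\in X:\|\varphi\|_{\mathcal{H}(\Sigma)}=1\}$ is compact and $\mathcal{Q}_\Sigma$ is continuous on it. Hence there is $\kappa>0$ with
\[
\mathcal{Q}_\Sigma(\varphi)\le -\kappa\|\varphi\|^2_{\mathcal{H}(\Sigma)}\qquad\forall\,\varphi\in X .
\]
All elements of $X$ are supported in a fixed compact set $K\subset\Sigma$. So in the scaled picture the functions $\varphi(\eps\y)$ live on $\eps^{-1}K$, and there $|\eps\y|$ stays bounded. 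The Fermi coordinates are valid for $|t|$ up to order $\delta/\eps$, so $v_\varphi\in C^\infty_c(\R^{N+1})$ is well defined (after a harmless smoothing, since $v'_\star$ and the data are smooth).

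\textbf{Step 2: apply the expansion.} By \eqref{exp-Qu},
\[
\mathbb{Q}_{u_\eps}(v_\varphi)\le \eps^{2-N}\big(-c_\star\kappa+o_\eps(1)\big)\|\varphi\|^2_{\mathcal{H}(\Sigma)} .
\]
The key point to check is that the $o_\eps(1)$ is uniform over $\varphi\in X$. Since $X$ has a fixed compact support, the relevant error terms are: $f_\eps=O(\eps^3)$, the coefficient terms $a^{ij},b^i=O(\eps)$ on $K$, the contribution of the cutoff $\chi_3$ (of size $e^{-c/\eps}$ because $v'_\star$ decays exponentially), and the remainders $\phi,\psi$ from Propositions \ref{prop-phi} and \ref{prop_psi}. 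Each of these is bounded by $o(1)$ times $\|\varphi\|^2_{C^2(K)}$, and on the finite-dimensional space $X$ this norm is comparable to $\|\varphi\|^2_{\mathcal{H}(\Sigma)}$. Therefore, for $\eps$ small enough (depending on $X$), $\mathbb{Q}_{u_\eps}(v_\varphi)<0$ for all $\varphi\in X\setminus\{0\}$.

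\textbf{Step 3: injectivity.} The map $\varphi\mapsto v_\varphi$ is linear. Its image $Y$ has dimension $k$ because the map is injective: if $v_\varphi=0$, then $\varphi(\eps\y)v'_\star(t)=0$ wherever $\chi_3=1$, which includes $|t|\le 1$ for $\eps$ small, and $v'_\star>0$ there, so $\varphi=0$. Thus $Y$ is a $k$-dimensional subspace on which $\mathbb{Q}_{u_\eps}$ is negative, giving ${\rm Morse}(u_\eps)\ge k$.

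The main obstacle is the uniformity in Step 2, i.e. making the $o_\eps(\eps^{2-N})$ in \eqref{exp-Qu} genuinely of the form $o(1)\,\eps^{2-N}\|\varphi\|^2$ on $X$. I would handle it as described, by working with $C^2$ norms on the fixed compact support and using equivalence of norms on $X$.

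A caveat remains when ${\rm Morse}(\Sigma)=\infty$. Here $\eps_0$ depends on $k$, so the conclusion ${\rm Morse}(u_\eps)=\infty$ for a fixed $\eps$ would need the stronger, $\varphi$-independent form of the error bound in \eqref{exp-Qu}. Alternatively, one can exploit that the negative directions in the examples come from the conical end (scale invariance of the cone), so they can be chosen at scales where the expansion is uniform.
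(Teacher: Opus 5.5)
Your construction is the paper's: the same transplanted functions $(\chi_3\,\varphi(\eps\cdot)v'_\star)^\natural$, the same expansion \eqref{exp-Qu}, and the same finite-dimensional compactness argument giving a uniform negative constant on $X$. The paper normalizes by $\int_\Sigma p_0\varphi^2$ and uses $\overline{\lambda}<0$. Your $\mathcal{H}(\Sigma)$-normalization is equivalent on negative directions, because $\mathcal{Q}_\Sigma(\varphi)<0$ forces $\int_\Sigma|\nabla_\Sigma\varphi|^2<\int_\Sigma|A_\Sigma|^2\varphi^2\le c\int_\Sigma p_0\varphi^2$. Your injectivity check and the smoothing remark supply details the paper omits. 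For ${\rm Morse}(\Sigma)<\infty$ your argument is complete for $\eps$ small.

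The one real gap is the one you flag yourself. When ${\rm Morse}(\Sigma)=\infty$, ``letting $k\to\infty$'' only gives ${\rm Morse}(u_\eps)\ge k$ for $\eps<\eps(k)$, not ${\rm Morse}(u_\eps)=\infty$ at a fixed $\eps$. Your threshold depends on $X$ both through the $C^2(K)$ control of the errors and through $\kappa$. The paper's written proof has the same dependence, through $\overline{\lambda}$, and does not treat this case separately, although Corollary \ref{cor-cz-lowdim} relies on it. Two uniformities close it.

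\textbf{Uniform error.} Use the error in \eqref{exp-Qu} in the form $\delta_\eps\,\eps^{2-N}\|\varphi\|^2_{\mathcal{H}(\Sigma)}$, with $\delta_\eps\to0$ independent of $\varphi$. This comes from the weighted bounds, not from $C^2$ norms on a fixed compact set:
\begin{itemize}
\item $|f_\eps|\le c\eps^3(1+|y|)^{2(\nu-1)}\le c\eps^3 p_0$, since $\nu<0$;
\item the $a^{ij},b^i$ terms are $O(\eps)\|\nabla_\Sigma\varphi\|^2_{L^2(\Sigma)}$ after integrating by parts;
\item the $\chi_3$ terms are exponentially small against $p_0$.
\end{itemize}

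\textbf{Uniform negativity.} You need an infinite-dimensional subspace on which $\mathcal{Q}_\Sigma(\varphi)\le-\kappa\int_\Sigma p_0\varphi^2$ with one fixed $\kappa>0$. Suppose $\Lambda_0^2=(\frac{N-2}{2})^2+\lambda_0<0$, as in Lemma \ref{lemma-examples-morse-infty}. On the end, take $\varphi_j=r^{-\frac{N-2}{2}}\zeta(\log r-jL)\phi_0(\theta)$. Here $\phi_0$ is the first eigenfunction of $-J_\Gamma$, and $\zeta$ is a fixed bump supported in $(0,L)$ with $L$ large. In the variable $s=\log r$ the quotient becomes $\int((\zeta')^2+\Lambda_0^2\zeta^2)\,ds\big/\int\zeta^2\,ds+o_j(1)$. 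This is $\le\Lambda_0^2/2$ for $j$ large.

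\textbf{Conclusion.} The $\varphi_j$ have disjoint supports, hence so do their transplants. So $\mathbb{Q}_{u_\eps}$ is diagonal on their span, and each diagonal entry is negative once $\delta_\eps$ is small compared with $c_\star\kappa$. This gives ${\rm Morse}(u_\eps)=\infty$ for every fixed small $\eps$.
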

\begin{proof} Let $m\in \mathbb{N}$ and $X$ be a $m$-dimensional subspace of $C^\infty_c(\Sigma)$ such that 
$$\mathcal{Q}_\Sigma(\varphi)<0\qquad\forall\,\varphi\in X\setminus\{0\}.
$$

Set
$$
\underline{\lambda}:=\inf \limits_{\varphi\in X\setminus \{0\}}\frac{\mathcal{Q}_{\Sigma}(\varphi)}{\int_{\Sigma}p_0\varphi^2d\sigma} \hbox{ and } \overline{\lambda}:=\sup\limits_{\varphi\in X\setminus \{0\}}\frac{\mathcal{Q}_{\Sigma}(\varphi)}{\int_{\Sigma}p_0\varphi^2d\sigma}. 
$$

Being $X$ finite-dimensional, a homogeneity and a continuity argument yields $-\infty<\underline{\lambda}\leq \overline{\lambda}<0$.

\medskip
Let $\varphi\in X\setminus\{0\}$. Multiplying if necessary, by a constant, we can assume that $\|p_0^{1/2}\varphi\|_{L^2(\Sigma)}=1$. Setting ${\tt v}(\y,t):=\varphi(\eps\y)v'_\star(t)$ and  $v:=({\tt v} \chi_3)^\natural$ by Lemma \ref{lemma:quasi_autofunzione_Allen-Cahn}, we can see that $v\in C^\infty_c(\R^{N+1})\setminus\{0\}$ and it satisfies
\begin{equation}\notag
\Q_{u_\eps}(\phi)
=\eps^{2-N} c_\star \mathcal{Q}_\Sigma(\varphi)+o_\eps(\eps^{2-N})\|\varphi\|^2_{\mathcal{H}(\Sigma)}<\overline{\lambda}\eps^{2-N}\int_\Sigma p_0\varphi^2 d\sigma(y)+o_\eps(\eps^{2-N})\le\overline{\lambda}\eps^{2-N}(1+o_\eps(1))<0
\end{equation}
provided $\eps>0$ is small enough. As a consequence, we get an $m$-dimensional subspace $Y\subset C^\infty_c(\R^{N+1})$ such that
$$\Q_{u_\eps}(\phi)<0\qquad\forall\,\phi\in Y,$$
which concludes the proof.
\end{proof}
\begin{remark}
Note that, in this subsection, we have never used the fact that $\bar{\nu}\ge-\frac{N-2}{2}$. For this reason this restriction is not necessary to conclude that $Morse(u_\eps)\ge Morse(\Sigma)$.
\end{remark}
\subsection{Proof of inequality (\ref{ineq-Morse2})}
In this subsection we will prove that, for $\eps>0$ small enough, we have $${\rm Morse}(u_\eps)\le {\rm Morse(\Sigma)},$$
at least if $\Sigma$ is dilation-nondegenerate and $\bar{\nu}\ge-\frac{N-2}{2}$.\\

In order to do so, we consider the eigenvalues of the truncated problem
\begin{equation}
\label{tr-prob-BR}
\begin{cases}
-\Delta \phi+W''(u_\eps)\phi=\lambda p(x)\phi \qquad\text{in}\, B_R(0) \\
\phi\in H^1_0(B_R(0)),
\end{cases}
\end{equation}
where $R>2\eps^{-1}$ is fixed. We note that the eigenvalues $\lambda_{i,\eps}(R)$ and the corresponding eigenfunctions $\phi_{i,\eps,R}$ have a variational characterization through the Rayleight quotient, 
thanks to the boundedness of $B_R(0)$. More precisely the first eigenvalue satisfies
\begin{equation}
\label{def-lambda-eps-1R}
\lambda_{\eps,1}(R):=\inf\left\{\frac{\Q_{u_\eps}(\phi)}{\|p^{1/2}\phi\|_{L^2(B_R(0))}}:\,\phi\in H^1_0(B_R(0))\setminus\{0\}\right\} . 
\end{equation}
and, for $i\ge 2$, the $i$-the eigenvalue satisfies
\begin{equation}
\label{def-lambda-eps-iR}
\lambda_{\eps,i}(R):=\inf\left\{\frac{\Q_{u_\eps}(\phi)}{\|p^{1/2}\phi\|_{L^2(B_R(0))}}:\,\phi\in H^1_0(B_R(0))\setminus\{0\},\,\int_{B_R(0)}p(x)\phi\phi_j dx=0,\,1\le j< i\right\}.
\end{equation}
Due to the linearity of \eqref{tr-prob-BR}, we can assume that 
\begin{equation}\label{norm-phi=1}
\int_{\R^{N+1}}p(x)\phi_{\eps,i,R}^2\,dx=1
\end{equation}
By the Calder\'{o}n-Zygmund regularity estimates (see Lemma $9.17$ of \cite{gilbarg1998elliptic}), the Sobolev embeddings and, if necessary, a bootstrap argument, it is possible to see that $\phi_{\eps,i,R}\in L^\infty(B_R)\cap C^{2}(B_R)$ is actually a classical solution to \eqref{tr-prob-BR}.\\

\color{black}Let $m_R(u_\eps)$ denote the number of negative eigenvalues (counting multiplicities) of \eqref{tr-prob-BR}. Then, due to the variational characterization of these eigenvalues, $m_R(u_\eps)$ is non-decreasing in $R$ and we have
\begin{lemma}
\label{lemma-sup-morse-uR}
\begin{equation}
\label{sup-morse-uR}
\sup_{R>2\eps^{-1}}m_R(u_\eps)=\lim_{R\to\infty}m_R(u_\eps)=Morse(u_\eps)=:m_\eps\in\N\cup\{\infty\}.
\end{equation}
\end{lemma}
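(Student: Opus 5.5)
We prove the chain of equalities in \eqref{sup-morse-uR} by comparing finite-dimensional negative subspaces for the truncated problems with those in $C^\infty_c(\R^{N+1})$. Throughout, we use the min-max characterization \eqref{def-lambda-eps-iR}. Since $p>0$ is bounded above and below on $B_R(0)$, the weighted problem \eqref{tr-prob-BR} has a discrete spectrum $\lambda_{\eps,1}(R)\le\lambda_{\eps,2}(R)\le\dots\to\infty$. Consequently $m_R(u_\eps)$ equals the maximal dimension of a subspace $V\subset H^1_0(B_R(0))$ on which $\Q_{u_\eps}$ is negative definite. The weight plays no role in the sign, since $\Q_{u_\eps}(\phi)<0$ if and only if the Rayleigh quotient is negative.

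\emph{Monotonicity.} If $R<R'$, extension by zero embeds $H^1_0(B_R)$ into $H^1_0(B_{R'})$ and preserves $\Q_{u_\eps}$. Therefore every negative definite subspace for $R$ is one for $R'$, and $m_R\le m_{R'}$. In particular $\sup_R m_R=\lim_{R\to\infty}m_R$ in $\N\cup\{\infty\}$.

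\emph{The inequality $\lim m_R\le {\rm Morse}(u_\eps)$.} Fix $R$ and let $V$ be the span of the first $k=m_R$ eigenfunctions $\phi_{\eps,1,R},\dots,\phi_{\eps,k,R}$. On $V$ we have $\Q_{u_\eps}(\phi)\le\lambda_{\eps,k}(R)\int p\phi^2$, and $\lambda_{\eps,k}(R)<0$. Since $C^\infty_c(B_R)$ is dense in $H^1_0(B_R)$ and $\Q_{u_\eps}$ is continuous on $H^1_0(B_R)$ (because $W''(u_\eps)\in L^\infty$), we approximate a basis of $V$ by smooth compactly supported functions. Negative definiteness on the unit sphere of a finite-dimensional space is an open condition: the quadratic form restricted to the span is given by a $k\times k$ matrix depending continuously on the basis. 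Hence for a close enough approximation we obtain a $k$-dimensional subspace of $C^\infty_c$ that is still negative definite and linearly independent. This gives $m_R\le {\rm Morse}(u_\eps)$ for every $R$.

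\emph{The inequality ${\rm Morse}(u_\eps)\le\lim m_R$.} Conversely, let $Y\subset C^\infty_c(\R^{N+1})$ be finite-dimensional with $\Q_{u_\eps}<0$ on $Y\setminus\{0\}$. A basis of $Y$ has compact support contained in some $B_{R_0}$, so $Y\subset H^1_0(B_R)$ for all $R\ge R_0$. By min-max, $\lambda_{\eps,\dim Y}(R)\le \max_{Y\setminus\{0\}}\Q_{u_\eps}/\int p\phi^2<0$, so $m_R\ge\dim Y$. Taking the supremum over $Y$, which is allowed to be infinite, yields ${\rm Morse}(u_\eps)\le\lim_R m_R$.

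The only mildly delicate step is the density and perturbation argument in the second step, namely keeping negative definiteness and linear independence under approximation. This is handled by finite-dimensional continuity of the Gram matrix of $\Q_{u_\eps}$ together with the strict inequality $\lambda_{\eps,k}(R)<0$. Everything else is the standard Courant–Fischer principle.
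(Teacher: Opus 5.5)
Your proof is correct and follows the same route as the paper. Monotonicity of $m_R$ comes from the variational characterization, and ${\rm Morse}(u_\eps)\le\lim_R m_R$ comes from placing a finite-dimensional negative subspace of $C^\infty_c(\R^{N+1})$ inside a large ball and applying min-max. Your version is more careful than the paper's in two places: you bound $\lambda_{\eps,\dim Y}(R)$ by the maximum of the Rayleigh quotient over the whole span rather than function by function, and you supply the density/perturbation argument for $m_R\le{\rm Morse}(u_\eps)$ that the paper says follows at once from the definition.
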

\begin{proof}
Assume by contradiction that $k:=\lim_{R\to\infty}m_R(u_\eps)<m_\eps$. Due to the definition of Morse index, it is possible to find $k+1$ linearly independent functions $\bar{\phi}_{\eps,1},\dots,\bar{\phi}_{\eps,k+1}\in C^\infty_c(\R^{N+1})$ such that $\|p^{1/2}\bar{\phi}_{i,\eps}\|_{L^2(\R^{N+1})}=1$ and $\Q_{u_\eps}(\bar{\phi}_{\eps,i})<0$, for $i=1,\dots,k+1$. As a consequence, taking $R>0$ so large that ${\rm supp}(\bar{\phi}_{\eps,i})\subset B_R(0)$, for $i=1,\dots,k+1$, we have
$$\lambda_{\eps,i}(R)\le\Q_{u_\eps}(\bar{\phi}_{\eps,i})<0,\qquad\forall\,i=1,\dots,k+1,$$
a contradiction. Hence $k\ge m_\eps$. The opposite inequality follows at once from the definition of Morse index.
\end{proof}

The strategy of the proof of inequality \eqref{ineq-Morse2} is the following. If $Morse(\Sigma)=\infty$ there is nothing to prove, hence we can assume that $n:=Morse(\Sigma)$ is finite. Assume by contradiction that there exists a sequence $\eps_k\to 0$ such that $n<m_{\eps_k}$. As a consequence, there exists $R_k>2\eps_k^{-1}$ such that there exist $n+1$ linearly independent eigenfunctions $$\phi_{\eps_k,1,R_1},\dots,\phi_{\eps_k,n+1,R_k}\in H^1_0(B_{R_k}(0))$$
of \eqref{tr-prob-BR} in $B_{R_k}(0)$ fulfilling
\begin{equation}\label{ort-phi-R-ij}
\int_{\R^{N+1}}p(x)\phi_{\eps_k,i,R}\phi_{\eps_k,j,R}dx=\delta_{ij},\,1\le i,j\le n+1,\,k\in\N
\end{equation}
corresponding to negative eigenvalues
$$\lambda_{\eps_k,1}(R_k),\dots,\lambda_{\eps_k,n+1}(R_k)\in(-\infty,0).$$
To simplify the notation, from now on we will write $\phi_{i,k}:=\phi_{\eps_k,i,R_k},\,\lambda_{i,k}:=\lambda_{i,\eps_k}(R_k)$ and $\eps:=\eps_k$.\\

We will show that these eigenfunctions can be used to construct a Jacobi field $\varphi\ne 0$ of $\Sigma$ such that  $\varphi\in D_\nu(\Sigma)$ for some $\nu>\bar{\nu}$, which contradicts the dilation-nondegeneracy of $\Sigma$.\\



\color{black}Writing $\phi_{i,k}=\chi_2^\natural \phi_{i,k}+(1-\chi_2^\natural)\phi_{i,k}$, the compactly supported $C^2$-function $$\psi_{i,k}(\y,t):=\chi_2(\y,t)\phi_{i,k}^\sharp(\y,t),\qquad\forall\,(\y,t)\in\Sigma_\eps\times\R,$$ 
can be decomposed as
\begin{equation}\label{dec-psi}
\psi_{i,k}=\psi_{i,k}^\parallel+\psi_{i,k}^\bot,\qquad\int_\R \psi_{i,k}^\bot v'_\star dt=0\quad\forall\,\y\in\Sigma_\eps,\quad \psi_{i,k}^\parallel=\varphi_{i,k}(\eps\y)v'_\star(t),
\end{equation}
where the function $\varphi_{i,k}:\Sigma\to\R$ defined by $$\varphi_{i,k}(\eps \y):=\frac{\int_\R\psi_{i,k}(\y,t)v'_\star(t)dt}{\int_\R (v'_\star)^2dt}\qquad\forall\,\y\in\Sigma_\eps$$
is the projection of $\psi_k$ onto $v'_\star$.

\begin{lemma}\label{lemma-varphi-R-bd}
There exists $c>0$ such that
\begin{equation}\label{est-varphi-lu}
\eps^{-N/2}\|p_0^{1/2}\varphi_{i,k}\|_{L^2(\Sigma)}\le c    
\end{equation}
\begin{equation}\label{est-psi-R-bot}
\|p_0^{1/2}\psi_{i,k}^\bot\|_{L^2(\Sigma_\eps\times\R)}\le c, 
\end{equation}
and
\begin{equation}\label{est-phi-far}
\|(1-\chi^\natural_1)\phi_{i,k}\|_{H^1(B_R(0))}\le c\|\psi_{i,k}\|_{H^1({\rm supp}(\nabla_{\y,t}\chi_1))}
\end{equation}
for any $1\le i\le n+1$ and $k>0$ large enough.
\end{lemma}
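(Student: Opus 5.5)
The plan is to exploit the normalization \eqref{norm-phi=1} together with the negativity of the eigenvalues, which gives $\Q_{u_\eps}(\phi_{i,k})=\lambda_{i,k}<0$. In particular
$$\int_{B_{R_k}}|\nabla\phi_{i,k}|^2\,dx<\int_{B_{R_k}}(-W''(u_\eps))\phi_{i,k}^2\,dx\le c.$$
This works because $W''$ is bounded on $[-a,a]$ and $p$ is comparable to $(1+|\eps x|)^{-2}$, while $W''(u_\eps)<0$ only inside the tube where $|t|\lesssim 1$.

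I would first establish \eqref{est-phi-far}. Outside the support of $\chi_1^\natural$, the solution satisfies $u_\eps\to\pm a$ exponentially, so $W''(u_\eps)\ge W''(a)/2>0$ there. Multiply the eigenvalue equation \eqref{tr-prob-BR} by $(1-\chi_1^\natural)^2\phi_{i,k}$ and integrate by parts; since $\lambda_{i,k}<0$ and $p>0$, the right-hand side has a favorable sign. This yields coercivity,
$$\int\big(|\nabla((1-\chi_1^\natural)\phi)|^2+c_0(1-\chi_1^\natural)^2\phi^2\big)\le \int|\nabla\chi_1^\natural|^2\phi^2,$$
and $\nabla\chi_1^\natural$ is supported where $\chi_2=1$, so $\phi=\psi_{i,k}$ there. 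Changing to Fermi coordinates has a bounded Jacobian, which gives \eqref{est-phi-far}.

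Next I would prove \eqref{est-psi-R-bot} and \eqref{est-varphi-lu}. The $L^2$ bounds follow directly from the normalization: $\|p^{1/2}\psi_{i,k}\|_{L^2}\le c$ in Fermi coordinates, again since the Jacobian is close to $1$. Orthogonality in $t$ gives $\|p_0^{1/2}\psi^\bot\|^2+c_\star\,\eps^{-N}\|p_0^{1/2}\varphi_{i,k}\|^2_{L^2(\Sigma)}\le c$. Here the factor $\eps^{-N}$ comes from the change of variables $\y\mapsto\eps\y$ on $\Sigma_\eps$, and $p_0$ in the two statements is read at $\eps\y$. These bounds are thus essentially Pythagoras plus the normalization \eqref{norm-phi=1}.

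The main obstacle will be a stronger form of the $\psi^\bot$ control, namely an $H^1$ bound via the spectral gap in the Lemma of Section 4, which is likely needed later. I would aim for it by expanding $\Q_{u_\eps}(\psi)$ as $Q(\psi^\parallel)+Q(\psi^\bot)$ plus cross terms of size $o(1)$, using \eqref{Laplacian_shifted_Fermi} and \eqref{est_remainder_laplacian}. The gap $\lambda>0$ then gives $Q(\psi^\bot)\ge \lambda\|\psi^\bot\|^2$, while Lemma \ref{lemma:quasi_autofunzione_Allen-Cahn} gives $Q(\psi^\parallel)\approx\eps^{2-N}c_\star\mathcal Q_\Sigma(\varphi)$, which is bounded below by $-c\,\eps^{2-N}\|p_0^{1/2}\varphi\|^2$ because $|A_\Sigma|^2\le cp_0$. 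Combining these with the negativity of $\Q_{u_\eps}$ closes the estimates for $k$ large.
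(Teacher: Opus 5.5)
Your proof of all three estimates is correct and is essentially the paper's. Estimates \eqref{est-varphi-lu}--\eqref{est-psi-R-bot} follow from the normalization \eqref{norm-phi=1}, the bounded Jacobian of the shifted Fermi coordinates and Pythagoras in $t$, and \eqref{est-phi-far} follows from testing \eqref{tr-prob-BR} with $(1-\chi_1^\natural)^2\phi_{i,k}$ using $\lambda_{i,k}<0$ and $W''(u_\eps)\ge W''(a)/2$ away from $\Sigma_\eps$. Your identity $\int\nabla\phi\cdot\nabla(\eta^2\phi)=\int|\nabla(\eta\phi)|^2-\int|\nabla\eta|^2\phi^2$ is a tidier version of the paper's Cauchy--Schwarz step and even gives an $L^2$ norm of $\psi_{i,k}$ on the right.

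One caveat: your opening bound $\int(-W''(u_\eps))\phi_{i,k}^2\,dx\le c$ is not justified. The weight $p\sim(1+|\eps\y|)^{-2}$ degenerates along the unbounded tube $\{|t|\lesssim 1\}$ where $W''(u_\eps)<0$, so \eqref{norm-phi=1} does not control $\int_{\{|t|\lesssim 1\}}\phi_{i,k}^2$. Nothing in your proof of the lemma uses this bound, so the argument stands.
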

\begin{proof}
Using the fact that $$\int_{\R^{N+1}}p\phi_{i,k}^2\, dx=1$$ 
and the change of variables $x=\y+(t+h(\eps\y))\nu_\Sigma(\eps\y)$ associated to the shifted Fermi coordinates in the above integral, we can see that
$$\int_{\Sigma_\eps\times\R}p_0(\eps\y)\psi_{i,k}^2(\y,t)d\y dt\le c,$$
for some constant $c>0$ independent of $k$. Using the decomposition $\psi_{i,k}=\psi_{i,k}^\parallel+\psi_{i,k}^\bot$ introduced in \eqref{dec-psi}, this yields that $\eps^{-N/2}p_0^{1/2}\varphi_{i,k}$ is bounded above in $L^2(\Sigma)$, uniformly in $k$, and  $p_0^{1/2}(\eps\cdotp)\psi_{i,k}^\bot$ is bounded above in $L^2(\Sigma_\eps\times\R)$ uniformly in $k$ too.\\

It remains to prove \eqref{est-phi-far}. In order to do so, we multiply the equation satisfied by $\phi_{i,k}$ by $\phi_{i,k}(1-\chi^\natural_1)^2$ and integrate by parts over $B_{R_k}(0)$ to see that
\begin{equation}\notag
\begin{aligned}
\int_{B_{R_k}(0)}(|\nabla \phi_{i,k}|^2+W''(u_\eps)\phi_{i,k}^2)(1-\chi^\natural_1)^2\,dx&=\lambda_{i,k}\int_{B_{R_k}(0)}p(x)\phi_{i,k}^2\, dx\\ 
&+2\int_{B_{R_k}(0)}(1-\chi^\natural_1)\phi_{i,k}\nabla\phi_{i,k}\cdotp\nabla \chi^\natural_1\,dx \\ 
&\le c\int_{B_{R_k}(0)\cap\{\nabla\chi^\natural_1\ne 0\}}|\phi_{i,k}||\nabla\phi_{i,k}|\,dx\\
&\le c\int_{{\rm supp}(\nabla_{\y,t}\chi_1)}|\psi_{i,k}||\nabla_{\y,t}\psi_{i,k}| \,d\sigma dt\le\\
&\le c\|\psi_{i,k}\|_{L^2({\rm supp}(\nabla_{\y,t}\chi_1))}\|\nabla_{\y,t}\psi_{i,k}\|_{L^2({\rm supp}(\nabla_{\y,t}\chi_1))}\\
&\le c\|\psi_{i,k}\|^2_{H^1({\rm supp}(\nabla_{\y,t}\chi_1))}
\end{aligned}    
\end{equation}
Therefore, the conclusion follows from the fact that $$W''(u_\eps)\ge\frac{1}{2}W''(a)>0\qquad\forall\,x\in{\rm supp}(\nabla \chi^\natural_{1}).$$
\end{proof}
\begin{remark}\label{rem-phi-far-bd}
The bound \eqref{est-phi-far} shows that the norm of $\phi_{i,k}$ far from $\Sigma_\eps$ can be controlled by the norm of $\psi_{i,k}$ in a stripe whose distance from $\Sigma_\eps$ is exponentially large in $\eps$. 
\end{remark}
Note that
$$-\Delta(\chi^\natural_2\phi_{i,k})+W''(u_\eps)\chi^\natural_2\phi_{i,k}=\lambda_{i,k}p(x)\chi^\natural_2\phi_{i,k}-2\nabla\chi^\natural_2\cdotp\nabla\phi_{i,k}-(\Delta\chi^\natural_2)\phi_{i,k}\quad\text{in}\,\R^{N+1}.$$
The operator $\tilde{L}_\eps$ is basically the restriction of $L$ to $\mathcal{X}$ composed with the projection onto $v'_\star$.\\

Let us introduce the spaces

\begin{equation}\notag
\begin{aligned}
\mathcal{H}(\R^{N+1})&:=\{\phi\in H^1_{loc}(\R^{N+1}):|\nabla\phi|,\,p^{1/2}\phi\in L^2(\R^{N+1})\},\\
p^{-1/2}L^2(\R^{N+1})&:=\{f\in L^2_{loc}(\R^{N+1}):\, p_0^{1/2}f\in L^2(\R^{N+1})\}
\end{aligned}
\end{equation}
and the first-order linear operator $${\tt L}_\eps:\mathcal{H}(\R^{N+1})\to p^{-1/2}L^2(\R^{N+1})$$ defined by the relation
$${\tt L}_\eps\phi=-2\nabla\chi^\natural_2\cdotp\nabla\phi-(\Delta\chi^\natural_2)\phi.$$
In these notations, the function $\psi_{i,k}$ fulfills
\begin{equation}\label{eq-psi-R}
L\psi_{i,k}=\lambda_{i,k}p_0(\eps\y) \psi_{i,k}+({\tt L}_\eps\phi_{i,k})^\sharp\qquad\text{in }\,\Sigma_\eps\times\R,  
\end{equation}
where $L:=-\Delta_{\y,t}+W''(v_\eps)$ is defined through the expansion of the Laplacian (see \eqref{Laplacian_shifted_Fermi}). 
Equivalently, due to the linearity of $L$, using the decomposition $$\psi_{i,k}=\psi_{i,k}^\parallel+\psi_{i,k}^\bot=\varphi_{i,k}(\eps\cdotp)v'_\star+\psi_{i,k}^\bot,$$ we have
\begin{equation}\label{eq-parallel-bot}
L(\varphi_{i,k}(\eps\cdotp)v'_\star+\psi_{i,k}^\bot)=\lambda_{i,k}p_0(\eps\cdotp) \psi_{i,k}+({\tt L}_\eps\phi_{i,k})^\sharp\qquad\text{in }\,\Sigma_\eps\times\R,    
\end{equation}
The latter relation can be used to derive the equation satisfied by $\varphi_R$.\\

For this purpose, let us introduce the spaces 
\begin{equation}\notag
\begin{aligned}
\mathcal{H}(\Sigma_\eps\times\R)&:=\left\{\psi\in H^1_{loc}(\Sigma_\eps\times\R):\,|\nabla_\Sigma\psi|,\,|\partial_t\psi|,\,p_0(\eps\cdotp)^{1/2}\psi\in L^2(\Sigma_\eps\times\R)\right\}\\
\mathcal{X}&:=\left\{\psi\in\mathcal{H}(\Sigma_\eps\times\R):\,\int_\R \psi(\y,t)v'_\star(t)dt=0,\,\forall\,\y\in\Sigma_\eps\right\}\\
p_0^{-1/2}L^2(\Sigma)&:=\{f\in L^2_{loc}(\Sigma):\,p_0^{1/2}f\in L^2(\Sigma)\}
\end{aligned}
\end{equation}
and the linear operator
$$\tilde{L}_\eps:\mathcal{X}\to p_0^{-1/2}L^2(\Sigma)$$
defined by
$$\tilde{L}_\eps\psi(\eps\y):=\int_\R (L+\Delta_{\Sigma_\eps}+a^{ij}\partial_{ij}+\eps b^i\partial_i)\psi(\y,t)v'_\star(t)\,dt,\qquad\forall\,\y\in\Sigma_\eps.$$
The operator $\tilde{L}_\eps\psi$ represents the projection of $L|_{\mathcal{X}}$ onto $v'_\star$. In fact, given $\psi\in \mathcal{X}$, the terms $\Delta_{\Sigma_\eps}\psi$ and $a^{ij}\psi_{ij}+\eps b^i\psi_i$ are orthogonal to $v'_\star$. For this reason the terms involving the second derivatives in $\Sigma_\eps$ do not contribute to the projection onto $v'_\star$.\\

Similarly, we introduce the projection of the operator ${\tt L}_\eps$ onto $v'_\star$, that is the operator $$\tilde{\tt L}_\eps:\mathcal{H}(\R^{N+1})\to p_0^{-1/2}L^2(\Sigma)$$
given by
$$\tilde{\tt L}_\eps\phi(\eps\y):=\int_\R ({\tt L}_\eps\phi)^\sharp(\y,t)v'_\star(t)\,dt,\qquad\forall\,\y\in\Sigma_\eps$$
\begin{lemma}\label{lemma-eq-varphi-R}
In the above notations, $\varphi_{i,k}$ fulfills the equation
\begin{equation}\label{eq-varphi-R}
-J_\Sigma \varphi_{i,k}=  \eps^{-2}\lambda_{i,k}p_0\varphi_{i,k}+\eps^{-2}f_\eps\varphi_{i,k} -c_\star(a^{lj}(\varphi_{i,k})_{lj}+b^j(\varphi_{i,k})_j)+\eps^{-2}\tilde{L}_\eps\psi_{i,k}^\bot+\eps^{-2}\tilde{\tt L}_\eps\phi_{i,k},
\end{equation}
where $\mu_{i,k}=\eps^{-2}\lambda_{i,k}$,
$f_\eps$ fulfills
$$|f_\eps(y)|\le c\eps^3(1+|y|)^{2(\nu-1)},\qquad\forall\,y\in\Sigma,$$
and
$$|\tilde{L}_\eps\psi_{i,k}^\bot(\eps^{-1}y)|\le c\eps^{2}\left((1+|y|)^{-2}\|\psi^\bot_{i,k}(\eps^{-1}y,\cdotp)\|_{L^2_t(\R)}+(1+|y|)^{\nu-1}\|\nabla_{\Sigma_\eps}\psi^\bot_{i,k}(\eps^{-1}y,\cdotp)\|_{L^2_t(\R)}\right).$$
\end{lemma}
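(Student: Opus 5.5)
The plan is to test the equation \eqref{eq-psi-R}, written in the decomposed form \eqref{eq-parallel-bot}, against $v'_\star(t)$ in the $t$-variable, for each fixed $\y\in\Sigma_\eps$. After that we divide by $\eps^2 c_\star$ and rescale $y=\eps\y$. (The constant $c_\star$ is harmless and is absorbed into the normalisation of the projection.) This yields a pointwise identity on $\Sigma$. Each piece of that identity is then identified with a term of \eqref{eq-varphi-R}.

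\textbf{Parallel part.} Since $\psi^\parallel_{i,k}=\varphi_{i,k}(\eps\y)v'_\star(t)$, part (1) of Lemma \ref{lemma:quasi_autofunzione_Allen-Cahn} applies directly. Strictly speaking $\varphi_{i,k}$ need not be compactly supported. However, the computation giving \eqref{pr-Sigma} is pointwise in $\y$ and uses only the expansion \eqref{Laplacian_shifted_Fermi} together with the identity
$$\int_\R W'''(v_\star)\eta(v'_\star)^2dt=-\tfrac12\int_\R(v'_\star)^2dt,$$
so it carries over verbatim. It gives
$$\int_\R L(\psi^\parallel)v'_\star\,dt=-\eps^2c_\star J_\Sigma\varphi_{i,k}+f_\eps\varphi_{i,k}-\eps^2c_\star\big(a^{lj}(\varphi_{i,k})_{lj}+b^j(\varphi_{i,k})_j\big),$$
with $f_\eps$ satisfying \eqref{est-f-eps}.

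\textbf{Right-hand side.} For the eigenvalue term we use $\int_\R\psi^\bot_{i,k}v'_\star\,dt=0$. Pairing with $v'_\star$ then gives
$$\int_\R\lambda_{i,k}p_0\psi_{i,k}v'_\star\,dt=\lambda_{i,k}p_0\,c_\star\varphi_{i,k}.$$
The cutoff term produces $\tilde{\tt L}_\eps\phi_{i,k}$ by definition.

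\textbf{Orthogonal part.} The orthogonal part contributes $\int_\R L(\psi^\bot)v'_\star\,dt$. Here we subtract the operators $-\Delta_{\Sigma_\eps}-a^{ij}\partial_{ij}-\eps b^i\partial_i$ acting on $\psi^\bot$. These commute with integration in $t$, so the orthogonality kills their contribution. Likewise $\int_\R(-\partial_t^2+W''(v_\star))\psi^\bot v'_\star\,dt=0$, after integrating by parts and using $Lv'_\star=0$. What survives is exactly $\tilde L_\eps\psi^\bot_{i,k}$. It consists of:
\begin{itemize}
\item the $t$-direction perturbations $\eps^2 J_\Sigma h\,\partial_t$, $\eps^2 t|A_\Sigma|^2\partial_t$, $\eps^2|\nabla h|^2\partial_t^2$, $\eps{\tt Q}\partial_t$, and the corresponding $a^{ij}$-terms;
\item the mixed terms $-2\eps\nabla_\Sigma h\cdot\partial_t\nabla_{\Sigma_\eps}$ and $-2\eps a^{ij}h_i\partial_{tj}$;
\item the potential difference $W''(u_\eps)-W''(v_\star)$.
\end{itemize}

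To estimate $\tilde L_\eps\psi^\bot_{i,k}$ we integrate by parts in $t$, moving all $t$-derivatives onto $v'_\star$ and the exponentially decaying weights, and then apply Cauchy--Schwarz in $t$. The non-mixed coefficients are bounded by $c\eps^2(1+|y|)^{-2}$:
\begin{itemize}
\item $|A_\Sigma|^2\le cp_0$;
\item $\|h\|_{C^{2,\alpha}_\nu}\le C_1\eps$ with $\nu\ge -1$, so $\eps^2 J_\Sigma h=O(\eps^3(1+|y|)^{\nu-2})$;
\item ${\tt Q}$ satisfies \eqref{est_remainder_laplacian};
\item $W''(u_\eps)-W''(v_\star)=O(\eps^2(1+|y|)^{-2})$, by the construction of $U$ together with Propositions \ref{prop-phi} and \ref{prop_psi}.
\end{itemize}
These coefficients therefore give the $(1+|y|)^{-2}\|\psi^\bot\|_{L^2_t}$ contribution. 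The mixed terms carry $\eps\nabla_\Sigma h=O(\eps^2(1+|y|)^{\nu-1})$ multiplying $\nabla_{\Sigma_\eps}\psi^\bot$, which gives the second contribution.

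\textbf{Main obstacle.} The delicate step is bookkeeping the scaling, and deciding which normalisation of $f_\eps$ and of the remainders is meant after division by $\eps^2$. One must also check that no $t$-derivative lands on $\psi^\bot$ without being integrated away. The factor $t$ in $t|A_\Sigma|^2\partial_t$ is handled by the exponential decay of $v''_\star$, so it causes no difficulty. The cutoff $\chi_2$ makes $\psi_{i,k}$ compactly supported in $t$, which justifies all the integrations by parts.
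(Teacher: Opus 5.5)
Your overall route is the paper's: project \eqref{eq-parallel-bot} onto $v'_\star$ fibrewise, use part (1) of Lemma \ref{lemma:quasi_autofunzione_Allen-Cahn} for $\psi^\parallel_{i,k}$, use $\int_\R\psi^\bot_{i,k}v'_\star\,dt=0$ for the eigenvalue term, and integrate by parts in $t$ for the remaining perturbative terms. One step in your treatment of the orthogonal part is false, however. You claim that $a^{ij}\partial_{ij}$ and $\eps b^i\partial_i$ commute with $\int_\R(\cdot)\,v'_\star\,dt$, so that orthogonality kills them. This holds for $\Delta_{\Sigma_\eps}$, but not for these two operators, because their coefficients are evaluated at $(\eps\y,\eps(t+h(\eps\y)))$ and so depend on $t$. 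By \eqref{exp-G_ij}, $a^{ij}(y,z)=z\,\partial_za^{ij}(y,0)+O(z^2(1+|y|)^{-2})$. Here $\partial_za^{ij}(y,0)$ is twice the second fundamental form with raised indices; it has size $(1+|y|)^{-1}$ and is not identically zero for a nontrivial cone. The $h$-part drops out by orthogonality, so you get
\begin{equation*}
\int_\R a^{ij}\partial_{ij}\psi^\bot v'_\star\,dt=\eps\,\partial_za^{ij}(\eps\y,0)\,\partial_{ij}\int_\R t\,\psi^\bot(\y,t)\,v'_\star(t)\,dt+O\Big(\eps^2(1+|\eps\y|)^{-2}\sum_{i,j}\|\partial_{ij}\psi^\bot(\y,\cdot)\|_{L^2_t(\R)}\Big).
\end{equation*}
The integral $\int_\R t\,\psi^\bot v'_\star\,dt$ has no reason to vanish, since $\psi^\bot$ is orthogonal only to $v'_\star$.

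So, with $\tilde L_\eps$ defined as in the paper, your identity is missing the term $\eps^{-2}\int_\R(a^{ij}\partial_{ij}+\eps b^i\partial_i)\psi^\bot_{i,k}v'_\star\,dt$ (up to sign and the factor $c_\star$). If you instead put this term inside $\tilde L_\eps\psi^\bot_{i,k}$, i.e.\ use the full projection $\int_\R L\psi^\bot v'_\star\,dt$, then the pointwise bound fails. That contribution is of order $\eps(1+|y|)^{-1}$ times second tangential derivatives of $\psi^\bot$, which cannot be controlled by $\eps^2\|\psi^\bot\|_{L^2_t}$ and $\eps^2\|\nabla_{\Sigma_\eps}\psi^\bot\|_{L^2_t}$.

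The $\eps b^i\partial_i$ piece is harmless: $|\eps b^i|\le c\eps^2|t+h|(1+|y|)^{-2}$, and since $\nu\ge-1$ it fits into your gradient term. The $a^{ij}\partial_{ij}$ piece must be carried as a separate remainder. You can estimate it in weak form: after testing with a function on $\Sigma$, integrate by parts once in $\y$, which leaves only $\nabla_{\Sigma_\eps}\psi^\bot$. Alternatively, bound it pointwise with an extra term $\eps(1+|y|)^{-1}\|\nabla^2_{\Sigma_\eps}\psi^\bot\|_{L^2_t}$, but that needs second-derivative control of $\psi^\bot_{i,k}$ later. The paper's proof passes over the same point: its $g$ is the full projection, which contains $-(a^{lj}\partial_{lj}+\eps b^j\partial_j)\psi^\bot$ from \eqref{exp-psi-bot}, yet the stated bound has no second-derivative term.

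A smaller point: compact support in $t$ does not by itself justify the $t$-integrations by parts on fibres that cross $\partial B_{R_k}(0)$. There the zero extension of $\phi_{i,k}$ has a kink and \eqref{eq-psi-R} fails, so the identity should be read on fibres lying inside $B_{R_k}(0)$, or weakly.
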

\begin{proof}
The idea is the following: we multiply \eqref{eq-parallel-bot} by $v'_\star$ and integrate by parts over $\R$. First note that, by Lemma \ref{lemma:quasi_autofunzione_Allen-Cahn}, we have
\begin{equation}
\int_\R L(\varphi_{i,k}(\eps\y)v'_\star)v'_\star\,dt=-\eps^2c_\star J_\Sigma\varphi_{i,k}(\eps\y)+f_\eps(\eps\y)\varphi_{i,k}(\eps \y)-\eps^2c_\star(a^{lj}(\varphi_{i,k})_{lj}+b^j(\varphi_{i,k})_j)(\eps\y)\qquad\forall\,\y\in\Sigma_\eps,
\label{pr-Sigma-R}
\end{equation}
where $f_\eps\in C^{0,\alpha}(\Sigma)$ is such that
\begin{equation}\notag
|f_\eps(y)|\le c\eps^3(1+|y|)^{2(\nu-1)}\qquad\forall\, y\in\Sigma.  \end{equation}
for some constant $c>0$ independent of $k$.\\

Now let us consider the term involving $\psi^\bot:=\psi_{i,k}^\bot$. Using the expansion of the Laplacian in shifted Fermi coordinates \eqref{Laplacian_shifted_Fermi}, we note that
\begin{equation}\label{exp-psi-bot}
\begin{aligned}
-\Delta_{\y,t} \psi^\bot+W''(v_\eps)\psi^\bot=&-\Delta_{\Sigma_\eps} \psi^\bot-\partial^2_t\psi^\bot+\eps^2 J_\Sigma h \partial_t\psi^\bot+\eps^2|A_\Sigma|^2t\partial_t\psi+2\eps\nabla_\Sigma h\cdotp\nabla_\Sigma\partial_t\psi^\bot\\
&-\eps^2|\nabla_\Sigma h|^2\partial^2_t\psi^\bot+\eps{\tt Q}\partial_t\psi^\bot-(a^{lj}\partial_{lj}\psi^\bot+\eps b^i\partial_j\psi^\bot)\\
&+(a^{lj}h_{lj}+b^j h_j)\partial_t\psi^\bot+W''(v_\star)\psi^\bot+(W''(v_\eps)-W''(v_\star))\psi^\bot.
\end{aligned}
\end{equation}    
As a consequence, multiplying by $v'_\star$ and integrating over $\R$, we can see that the function defined by
$$g(\eps\y):=\int_{\R}(-\Delta_{\y,t} \psi^\bot+W''(v_\eps)\psi^\bot)v'_\star \, dt,\qquad\forall\,\y\in\Sigma_\eps,$$
satisfies 
$$|g_\eps(y)|\le c\eps^2\left((1+|y|)^{-2}\|\psi^\bot(\eps^{-1}y,\cdotp)\|_{L^2_t(\R)}+(1+|y|)^{\nu-1}\|\nabla_{\Sigma_\eps}\psi^\bot(\eps^{-1}y,\cdotp)\|_{L^2_t(\R)}\right).$$
\end{proof}
We are now interested in estimating the $H^1(\Sigma_\eps\times\R)$-norm of $\psi_R^\bot$. More precisely, we will see that such a norm is small in $\eps$ and bounded uniformly in $R$, which means that $\psi_R$ is basically parallel to $v'_\star$.
\begin{lemma}\label{lemma-improve-decay}
There exists $c>0$ such that
\begin{equation}\notag
\|\psi^\bot_{i,k}\|_{H^1(\Sigma_\eps\times\R)}\le c\eps^2,
\end{equation}
for any $1\le i\le n+1$ and $k>0$ large enough.
\end{lemma}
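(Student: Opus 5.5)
We will derive an equation for $\psi^\bot_{i,k}$ from \eqref{eq-parallel-bot} and then use the spectral gap of the one-dimensional operator on the subspace orthogonal to $v'_\star$, i.e. the nondegeneracy lemma of Section 4 (with constant $\lambda>0$), which gives coercivity of $-\Delta_{\Sigma_\eps}-\partial_t^2+W''(v_\star)$ on $\mathcal{X}$.

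\textbf{Step 1: the equation for $\psi^\bot_{i,k}$.} Subtract from \eqref{eq-parallel-bot} the term $L(\varphi_{i,k}(\eps\cdot)v'_\star)$ and move it to the right-hand side:
\begin{equation}\notag
L\psi^\bot_{i,k}=\lambda_{i,k}p_0(\eps\y)\psi_{i,k}+({\tt L}_\eps\phi_{i,k})^\sharp-L(\varphi_{i,k}(\eps\y)v'_\star).
\end{equation}
By the expansion \eqref{exp-L-psi-parallel}, and since $Lv'_\star=0$ for the model operator, $L(\varphi_{i,k}(\eps\y)v'_\star)$ equals $\eps^2(\Delta_\Sigma\varphi_{i,k})v'_\star$ plus terms of size $\eps^2(1+|\eps\y|)^{-2}|\varphi_{i,k}|e^{-\gamma|t|}$ and $\eps^2(1+|\eps\y|)^{\nu-1}|\nabla_\Sigma\varphi_{i,k}|e^{-\gamma|t|}$.

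\textbf{Step 2: energy estimate.} Test the equation with $\psi^\bot_{i,k}$ and integrate over $\Sigma_\eps\times\R$. The pieces proportional to $v'_\star$ integrate to zero in $t$, since $\psi^\bot_{i,k}$ is orthogonal to $v'_\star$ for every $\y$; in particular the leading term $\eps^2\Delta_\Sigma\varphi_{i,k} v'_\star$ drops out. On the left-hand side, the model part $-\Delta_{\Sigma_\eps}-\partial_t^2+W''(v_\star)$ gives at least $c\|\psi^\bot_{i,k}\|^2_{H^1}$. To see this, combine the Section 4 lemma with the bound $\int(\partial_t\psi)^2+W''\psi^2\ge c\int(\partial_t\psi)^2$, obtained by interpolation. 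The perturbation terms from \eqref{Laplacian_shifted_Fermi} are $O(\eps)$ relative to this and are absorbed. The eigenvalue term has a favorable sign on the orthogonal part, because $\lambda_{i,k}<0$ and $\int p_0\psi\psi^\bot\,dt=\int p_0(\psi^\bot)^2\,dt$; so it can be discarded.

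\textbf{Step 3: the remaining right-hand side.} We are left with the non-parallel remainder from Step 1 and the cutoff term ${\tt L}_\eps\phi_{i,k}$. For the remainder, Cauchy--Schwarz gives a bound of the form $c\eps^2\|\varphi_{i,k}(\eps\cdot)\|_{\mathcal{H}}\|\psi^\bot_{i,k}\|_{H^1}$. By Lemma \ref{lemma-varphi-R-bd}, after rescaling and weighting, this becomes $c\eps^2\|\psi^\bot_{i,k}\|_{H^1}$. The cutoff term lives where $|t|\sim\delta/(3\eps)$, and there $v'_\star$ is exponentially small. Combined with \eqref{est-phi-far} and Remark \ref{rem-phi-far-bd}, its contribution is $O(e^{-c/\eps})(1+\|\psi^\bot_{i,k}\|_{H^1})$. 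Dividing through then yields $\|\psi^\bot_{i,k}\|_{H^1}\le c\eps^2$.

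\textbf{Main obstacle.} The hard part is controlling $\nabla_\Sigma\varphi_{i,k}$ in the remainder term. Lemma \ref{lemma-varphi-R-bd} only bounds weighted $L^2$ norms, not gradients. We will first try to obtain the gradient bound by testing \eqref{eq-varphi-R} against $\varphi_{i,k}$, together with $\lambda_{i,k}<0$ and $\mathcal{Q}_{u_\eps}(\phi_{i,k})=\lambda_{i,k}<0$. Through \eqref{exp-Qu} this gives $\mathcal{Q}_\Sigma(\varphi_{i,k})\lesssim$ lower-order terms, hence an $\mathcal{H}(\Sigma)$ bound via the Hardy-type control $|A_\Sigma|^2\le cp_0$. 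We also need to handle the interplay of scales carefully, since the $\eps^{N/2}$ normalization must be tracked so that the final bound is $\eps^2$ uniformly in $k$.
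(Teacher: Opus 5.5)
Your proposal follows the paper's proof essentially step for step. You test \eqref{eq-parallel-bot} against $\psi^\bot_{i,k}$, take coercivity from the spectral gap of Section 4, discard the eigenvalue term using $\lambda_{i,k}<0$, let orthogonality kill $\eps^2(\Delta_\Sigma\varphi_{i,k})v'_\star$, and control the remaining cross term by an $\mathcal{H}(\Sigma)$ bound on $\eps^{-N/2}\varphi_{i,k}$ obtained by testing \eqref{eq-varphi-R} against $\varphi_{i,k}$.

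On the obstacle you flag: that test, as in the paper, only yields $\eps^{-N}\|\nabla_\Sigma\varphi_{i,k}\|^2_{L^2(\Sigma)}\le c(1+\|\psi^\bot_{i,k}\|_{H^1})$, because $\eps^{-2}\tilde{L}_\eps\psi^\bot_{i,k}$ involves $\nabla_{\Sigma_\eps}\psi^\bot_{i,k}$. So it must be closed jointly with your Step 2, or replaced by your expansion of $\mathbb{Q}_{u_\eps}(\phi_{i,k})=\lambda_{i,k}<0$, where $\|\psi^\bot_{i,k}\|^2_{H^1}$ enters with a favourable sign; either way the factor $\eps^2$ in the coupling lets it close.
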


\begin{proof}
The idea is the following: this time we multiply \eqref{eq-parallel-bot} by $\psi_{i,k}^\bot$ and integrate by parts over $\Sigma_\eps\times\R$.\\

Using that
$$\int_{\R}((v')^2+W''(v_\star)v^2)dt\ge c\int_\R v^2 dt\qquad\forall\,v\in H^1(\R),\,\int_\R v v'_\star dt=0,$$
and the expansion \eqref{exp-psi-bot}, it is possible to see that there exists $c>0$ such that
$$\int_{\Sigma_\eps\times \R}(-\Delta_{\y,t} \psi^\bot_{i,k}+W''(v_\eps)\psi^\bot_{i,k})\psi^\bot_{i,k} \,dtd\sigma\ge c\|\psi^\bot_{i,k}\|^2_{H^1(\Sigma_\eps\times\R)},\qquad\forall\,1\le i\le n+1,\,k>0$$
To deal with term involving $\varphi_{i,k}$, we use the expansion \eqref{exp-L-psi-parallel} and integrate by parts to prove that there exists $c>0$ such that
$$\left|\int_{\Sigma_\eps\times\R} L(\varphi_{i,k}(\eps\y)v'_\star)\psi^\bot_{i,k}\,d\sigma dt\right|\le c\eps^{2-N}\|p_0^{1/2}\psi^\bot_{i,k}\|_{L^2(\Sigma_\eps\times\R)}\|\varphi_{i,k}\|_{\mathcal{H}(\Sigma)},$$
for any $k>0$ large enough.\\

Multiplying equation \eqref{eq-varphi-R} by $\varphi_{i,k}$, integrating by parts over $\Sigma$ and using Lemma \ref{lemma-varphi-R-bd} , we can see that $\eps^{-N/2}\varphi_R$ is bounded in $\mathcal{H}(\Sigma)$, uniformly in $\eps$ and $R$. Therefore we have
$$\left|\int_{\Sigma_\eps\times\R} L(\varphi_{i,k}(\eps\y)v'_\star)\psi^\bot_{i,k}\,d\sigma dt\right|\le c\eps^2\|p_0^{1/2}\psi^\bot_{i,k}\|_{L^2(\Sigma_\eps\times\R)}\le c\eps^2\|\psi_{i,k}^\bot\|_{L^2(\Sigma_\eps\times\R)},$$
for some constant $c>0$. Hence the conclusion follows using that $\lambda_{i,k}<0$ and the smallness of ${\tt R}_\eps$.
\end{proof}
\begin{remark}
\item 
Lemma \ref{lemma-improve-decay} is actually very important, since it represents a relevant improvement of decay with respect to Lemma \ref{lemma-varphi-R-bd}. In fact Lemma \ref{lemma-varphi-R-bd} provides a uniform bound of $\psi_{i,k}^\bot$ in $\mathcal{H}(\Sigma_\eps\times\R)$, while Lemma \ref{lemma-improve-decay} provides a uniform bound in a stronger norm. Moreover, the upper bound behaves well in $\eps$, in the sense that it is small enough for our purposes.    
\end{remark}
\begin{lemma}
There exists $c>0$ such that 
\begin{equation}\notag
\|\psi^\bot_{i,k}\|_{L^\infty(\Sigma_\eps\times\R)} \le c,  
\end{equation}
for any $1\le i\le n+1$ and $k>0$ large enough.
\end{lemma}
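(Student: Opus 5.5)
We prove the uniform $L^\infty$ bound by local elliptic regularity applied to the equation satisfied by $\psi^\bot_{i,k}$, with local $L^2$ control supplied by Lemma \ref{lemma-improve-decay}.

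\textbf{Step 1: the equation for $\psi^\bot_{i,k}$.} Subtracting the parallel part from \eqref{eq-parallel-bot} gives
\begin{equation}\notag
L\psi^\bot_{i,k}=\lambda_{i,k}p_0(\eps\cdotp)\psi_{i,k}+({\tt L}_\eps\phi_{i,k})^\sharp-L(\varphi_{i,k}(\eps\cdotp)v'_\star)\qquad\text{in }\Sigma_\eps\times\R.
\end{equation}
By \eqref{Laplacian_shifted_Fermi} and \eqref{est_remainder_laplacian}, the operator $L=-\Delta_{\y,t}+W''(v_\eps)$ is uniformly elliptic on the support of $\chi_2$. Its coefficients are bounded in $C^{0,\alpha}$ uniformly in $\eps$ and $\y$, because the metric perturbations $a^{ij}$, $b^i$ and $\tt Q$ are small there. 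Unit cells $I_{\y,t}=B_1(\y)\times(t,t+1)$ are uniformly comparable to Euclidean balls, since $\Sigma_\eps$ has curvature $O(\eps)$.

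\textbf{Step 2: local $L^\infty$ estimate.} We apply the De Giorgi--Nash--Moser local maximum principle (Theorem 8.17 of \cite{gilbarg1998elliptic}) on each cell. This gives
\begin{equation}\notag
\|\psi^\bot_{i,k}\|_{L^\infty(I_{\y,t})}\le c\left(\|\psi^\bot_{i,k}\|_{L^2(2I_{\y,t})}+\|F_{i,k}\|_{L^q(2I_{\y,t})}\right),\qquad q>\tfrac{N+1}{2},
\end{equation}
where $F_{i,k}$ denotes the right-hand side above. The first term is at most $c\eps^2$ by Lemma \ref{lemma-improve-decay}.

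\textbf{Step 3: bounding the right-hand side.} We use the following bounds.
\begin{itemize}
\item Since $\lambda_{i,k}<0$, the term $\lambda_{i,k}p_0\psi_{i,k}$ can be moved to the left, where it only adds a nonnegative zeroth-order coefficient. This preserves the maximum principle estimate.
\item The term $({\tt L}_\eps\phi_{i,k})^\sharp$ is supported where $\nabla\chi_2\neq0$, at distance of order $\delta/\eps$ from $\Sigma_\eps$. There $W''(u_\eps)\ge\frac12W''(a)$, and by \eqref{est-phi-far} together with local regularity for $\phi_{i,k}$ this term is bounded.
\item The term $L(\varphi_{i,k}v'_\star)$ is estimated through \eqref{exp-L-psi-parallel}. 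It is controlled by $\eps^2$ times local $C^2$ norms of $\varphi_{i,k}$ on $\Sigma$, rescaled to $\Sigma_\eps$.
\end{itemize}

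\textbf{Step 4 (main obstacle): pointwise control of $\varphi_{i,k}$.} So far we only know that $\eps^{-N/2}\varphi_{i,k}$ is bounded in $\mathcal{H}(\Sigma)$. To upgrade this, apply local $W^{2,q}$ and Schauder estimates on $\Sigma$ to \eqref{eq-varphi-R}. On a unit geodesic ball this gives $|\varphi_{i,k}|+|\nabla^2\varphi_{i,k}|\le c\,\eps^{N/2}(1+|y|)^{\kappa}$ for some $\kappa$; the coupling terms $\eps^{-2}\tilde L_\eps\psi^\bot$ are small by Lemma \ref{lemma-improve-decay}. Rescaling to $\Sigma_\eps$ turns this into an $O(1)$ local bound, since the $\eps^{N/2}$ gain compensates the Jacobian of the rescaling.

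The equations for $\varphi_{i,k}$ and $\psi^\bot_{i,k}$ are coupled. If this coupling prevents closing the estimate, I would run a bootstrap: first establish $L^q$ bounds for both, then $L^\infty$. Alternatively, I would prove the $L^\infty$ bound for the full $\phi_{i,k}$ directly near $\Sigma_\eps$, using the uniform $p$-weighted $L^2$ normalization \eqref{norm-phi=1} on cells where $p\sim1$. Then $\psi^\bot_{i,k}=\psi_{i,k}-\varphi_{i,k}v'_\star$ is bounded because its projection $\varphi_{i,k}$ is bounded by Cauchy--Schwarz in $t$.
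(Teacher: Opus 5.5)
Your Steps 1--3 only reduce the lemma to Step 4. Step 4, which you yourself flag as the main obstacle, is not carried out, and neither of the two closures you sketch works as stated.

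\textbf{Uniformity at infinity.} The eigenfunction $\phi_{i,k}$ lives on $B_{R_k}(0)$, and $R_k$ is completely uncontrolled relative to $\eps^{-1}$. Meanwhile \eqref{norm-phi=1} only controls $\int p\,\phi_{i,k}^2$ with $p\sim(1+|\eps x|)^{-2}$, so every cell-by-cell bound extracted from it grows. On a unit cell at $\y$ you get $\|\phi_{i,k}\|_{L^2}\lesssim 1+|\eps\y|$. On a unit geodesic ball of $\Sigma$ you get $\|\varphi_{i,k}\|_{L^2(B_1(y))}\lesssim\eps^{N/2}(1+|y|)$, i.e.\ $\kappa=1$ in Step 4. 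The claim that ``the $\eps^{N/2}$ gain compensates the Jacobian'' does not fix this. The substitution $y=\eps\y$ produces a Jacobian in integrals, not in pointwise values, so on $\Sigma_\eps$ the bound reads $|\varphi_{i,k}(\eps\y)|\le c\,\eps^{N/2}(1+|\eps\y|)^{\kappa}$. Since $|\eps\y|$ ranges up to $\eps R_k$, this is uniform only if $\kappa\le 0$. Your alternative at the end fails for the same reason: the cells where $p\sim 1$ only cover $|x|\lesssim\eps^{-1}$.

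\textbf{Circularity.} The forcing $L(\varphi_{i,k}(\eps\cdot)v'_\star)$ contains $\eps^2\Delta_\Sigma\varphi_{i,k}\,v'_\star$ (see \eqref{exp-L-psi-parallel}). This term is not small unless you substitute \eqref{eq-varphi-R}. But local $W^{2,q}$ or Schauder estimates for \eqref{eq-varphi-R} need two inputs you do not have. First, $\eps^{-2}\lambda_{i,k}$ must be bounded; this is Lemma \ref{lemma-lower-est-ev}, which the paper proves only afterwards, using \eqref{est-varphi-R-H}. Second, $\eps^{-2}\tilde L_\eps\psi^\bot_{i,k}$ must be in $L^q_{loc}$, i.e.\ you need control of $\|\nabla_{\Sigma_\eps}\psi^\bot_{i,k}(\y,\cdot)\|_{L^2_t}$ pointwise in $\y$. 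Lemma \ref{lemma-improve-decay} only controls this in $L^2(\Sigma_\eps)$. Likewise, in Step 3 only $\lambda_{i,k}p_0\psi^\bot_{i,k}$ can be moved to the left-hand side. The piece $\lambda_{i,k}p_0\varphi_{i,k}v'_\star$ stays on the right and again needs a bound on $\lambda_{i,k}$. ``Run a bootstrap'' does not say how this loop closes.

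\textbf{How the paper proceeds.} It sidesteps both issues with a blow-up argument instead of a direct local estimate.
\begin{enumerate}
\item Assume $\mu_k=\|\psi^\bot_{i,k}\|_\infty\to\infty$ and take a maximum point $(\y_k,t_k)$.
\item Show $t_k$ stays bounded, by the maximum principle and the form of $u_\eps$ away from $\Sigma_\eps$.
\item Recentre at $\eps_k\y_k$ in local coordinates on $\Sigma$, divide by $\mu_k$, and pass to a local limit.
\item The limit is a bounded solution of $-\Delta_z\psi_\infty-\partial_t^2\psi_\infty+W''(v_\star)\psi_\infty=0$ in $\R^N\times\R$ with $\int_\R\psi_\infty v'_\star\,dt=0$.
\item The nondegeneracy of $v_\star$ (Section 4) forces $\psi_\infty\equiv0$, contradicting $|\psi_\infty(0,t_\infty)|=1$.
\end{enumerate}
This buys exactly what your route is missing. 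Uniformity in $\y$ is automatic, because the limit problem is the same flat problem whatever the centre. Terms linear in $\psi^\bot_{i,k}$ with $O(\eps)$ coefficients (your coupling) vanish after division by $\mu_k$. One only needs the remaining forcing to be $o(\mu_k)$ near the blow-up points.

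\textbf{If you keep the direct route.} To keep the De Giorgi--Nash--Moser route you must supply two things. The first is a bound on $\eps^{-2}\lambda_{i,k}$ that does not depend on this lemma. The second is a mechanism that offsets the $(1+|\eps\y|)$-growth. For instance, project out the parallel term $\eps^2\Delta_\Sigma\varphi_{i,k}v'_\star$. Then check that every remaining $\varphi_{i,k}$-term in \eqref{exp-L-psi-parallel} carries a coefficient decaying at least like $(1+|\eps\y|)^{-1}$; these coefficients are $|A_\Sigma|^2$, $a^{ij}$, $\nabla_\Sigma h$, ${\tt Q}$ and $W''(v_\eps)-W''(v_\star)$.
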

\begin{proof}
Let us assume by contradiction that, up to a subsequence, $\psi_k:=\psi_{i,k}^\bot$ satisfy $$\mu_k:=\|\psi_k\|_\infty\to\infty,\qquad k\to\infty.$$ 
Let $(\y_k,t_k)\in\Sigma_{\eps_k}\times\R$ be such that $\|\psi_k\|_\infty=|\psi_k(\y_k,t_k)|$. Due to the maximum principle and the form $u_{\eps_k}$, we can see that $t_k$ is bounded. Hence, up to a subsequence, $t_k\to t_\infty\in\R$. Let $\delta>0$ and let $$Y_k:z\in B_\delta(0)\subset\R^N\mapsto y:=Y(z)\in\Sigma$$ 
be a parametrization of $\Sigma$ around $\eps_k\y_k\in\Sigma$ such that $Y_k(0)=\eps_k\y_k$ and the corresponding local metric ${\tt g}$ on $\Sigma$ satisfies ${\tt g}_{ij}(0)=\delta_{ij}$. In these coordinates, the Laplace-Beltrami operator of $\Sigma_{\eps_k}$ around $\y_k$ reads $$\Delta_{\Sigma_{\eps_k}}={\tt g}^{ij}(\eps_k {\tt z})\partial_{ij}+\frac{\partial_j(\sqrt{|{\tt g}(\eps_k {\tt z})|}{\tt g}^{ij}(\eps_k {\tt z}))}{\sqrt{|{\tt g}(\eps_k {\tt z})|}}\partial_i,$$
therefore, thanks to the expansion of the Laplacian in shifted Fermi coordinates \eqref{Laplacian_shifted_Fermi}, the function $\tilde{\psi}_k({\tt z},t):=\frac{\eps_k^{-1}\psi_k(Y(\eps_k {\tt z}),t)}{\mu_k}$ satisfy the problem

\begin{equation}\notag
\begin{aligned}
-{\tt a}^{IJ}(\eps_k {\tt z},t)\partial_{IJ}\tilde{\psi}_k-{\tt b}^I(\eps_k{\tt z},t)\partial_I\tilde{\psi}_k+W''(v'_\star(t))\tilde{\psi}_k&=\tilde{f}_k\qquad\forall\,({\tt z},t)\in B_{\eps_k^{-1}\delta}\times\R, \\  
\int_\R \tilde{\psi}_k({\tt z},t)v'_\star(t)dt&=0\qquad\forall\,{\tt z}\in B_{\eps_k^{-1}\delta}(0),
\end{aligned}    
\end{equation}
where $1\le I,J\le N+1$,
\begin{equation}\notag
\begin{aligned}
&{\tt a}^{ij}(\eps_k {\tt z},t)\to\delta^{ij},\qquad\forall\,1\le i,j\le N\\
&{\tt a}^{tt}(\eps_k {\tt z},t)\to 1\\
&{\tt a}^{it}(\eps_k {\tt z},t)={\tt a}^{ti}(\eps_k {\tt z},t)\to 0,\qquad\forall\, 1\le i\le N\\
&{\tt b}^I(\eps_k {\tt z},t)\to 0,\qquad\forall\, 1\le I\le N+1
\end{aligned}    
\end{equation}
as $k\to \infty$, uniformly in ${\tt z}$ and $t$, and $\tilde{f}_k\to 0$ in $L^2(\R^{N+1})$. Since $\|\tilde{\psi}_k\|_\infty=1$ for any $k$, $\tilde{\psi}_k$ admits a subsequence which converges locally uniformly to a bounded solution $\psi_\infty\in H^1(\R^{N+1})$ to 
\begin{equation}\notag
\begin{aligned}
-\Delta_{\tt z}\psi_\infty-\partial_t^2\psi_\infty+W''(v_\star(t))\psi_\infty&=0\qquad\forall\,({\tt z},t)\in\R^N\times\R \\
\int_\R \psi_\infty({\tt z},t)v'_\star(t)dt&=0\qquad\forall\,{\tt z}\in\R^N.
\end{aligned}    
\end{equation}
Therefore $\psi_\infty\equiv 0$. However, we have
$$1=|\tilde{\psi}_k(0,t_k)|\to|\psi_\infty(0,t_\infty)|,$$
which is a contradiction.\\   
\end{proof}
\color{black}\begin{remark}\label{rem-bound-psi-bot}
\begin{enumerate}
\item The uniform $L^\infty$-bound gives an exponential decay of $\psi_{i,k}^\bot$ which is also uniform in $k$, namely there exist $c,\,\gamma>0$ such that
\begin{equation}\notag
|\psi_{i,k}^\bot(\y,t)|\le ce^{-\gamma/\eps},\qquad \forall\,(\y,t)\in \Sigma_\eps\times\R,
\end{equation}
for any $1\le i\le n+1$ and $k>0$ large enough.
\item As a consequence, we have
\begin{equation}\notag
\begin{aligned}
\|(1-\chi^\natural_1)\phi_{i,k}\|_{H^1(\R^{N+1})}&\le\|\psi_{i,k}^\bot\|_{H^1({\rm supp}(\nabla_{\y,t}\chi_1))}+ce^{-\gamma/\eps}\eps^{-N/2}\|p_0^{1/2}\varphi_{i,k}\|_{L^2(\Sigma)}\\
&\le ce^{-\gamma/\eps}+ce^{-\gamma/\eps}\eps^{-N/2}\|p_0^{1/2}\varphi_{i,k}\|_{L^2(\Sigma)}\le ce^{-\gamma/\eps},
\end{aligned}
\end{equation}
for any $1\le i\le n+1$ and $k>0$ large enough.
\item Testing equation \eqref{eq-varphi-R} with $\varphi_{i,k}$ and using the estimates about $\psi_{i,k}^\bot$ and $\phi_{i,k}$ given by Lemma \ref{lemma-improve-decay} and \eqref{est-phi-far} respectively, we can see that there exists $c>0$ such that 
\begin{equation}\label{est-varphi-R-H}
\|\varphi_{i,k}\|_{\mathcal{H}(\Sigma)}\le c\|p_0^{1/2}\varphi_{i,k}\|_{L^2(\Sigma)}\le c,
\end{equation}
for any $k>0$ large enough.
\end{enumerate}
\end{remark}
\begin{lemma}\label{lemma-lower-est-ev}
In the above notations, there exists $\bar{\mu}>0$ such that
\begin{equation}\notag
-\bar{\mu}\eps^{2}\le \lambda_{i,k}<0,\qquad\forall\,1\le i\le n+1.
\end{equation}   
for any $k>0$ large enough.
\end{lemma}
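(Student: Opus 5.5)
The upper bound $\lambda_{i,k}<0$ holds by the choice of the $n+1$ eigenvalues, so only the lower bound $\lambda_{i,k}\ge-\bar{\mu}\eps^2$ needs proof. The plan is to test the eigenvalue equation with the eigenfunction itself, use $\int p\phi_{i,k}^2=1$, and show that the Rayleigh quotient $\Q_{u_\eps}(\phi_{i,k})$ is bounded below by $-c\eps^2$. The most negative contribution will come from the tangential component $\varphi_{i,k}(\eps\y)v'_\star$, which only sees the Jacobi potential $|A_\Sigma|^2\le c\,p_0$ scaled by $\eps^2$.

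Concretely, I will argue by contradiction: assume $\mu_{i,k}:=\eps^{-2}\lambda_{i,k}\to-\infty$ along a subsequence.

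First, split $\phi_{i,k}=\chi^\natural_2\phi_{i,k}+(1-\chi^\natural_2)\phi_{i,k}$. By Remark \ref{rem-bound-psi-bot}(2), the far part has $H^1$ norm $O(e^{-\gamma/\eps})$, and $W''(u_\eps)\ge \frac12 W''(a)>0$ there, so it contributes a nonnegative quantity up to an $O(e^{-\gamma/\eps})$ error.

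Second, in shifted Fermi coordinates write $\psi_{i,k}=\varphi_{i,k}(\eps\y)v'_\star+\psi^\bot_{i,k}$. The orthogonal part contributes at least $c\|\psi^\bot_{i,k}\|^2_{H^1}\ge0$, by the spectral gap lemma of Section 4 and the coercivity used in Lemma \ref{lemma-improve-decay}. The cross terms are bounded, as in the proof of Lemma \ref{lemma-improve-decay}, by $c\eps^2\|\psi^\bot_{i,k}\|_{L^2}\|\varphi_{i,k}\|_{\mathcal H(\Sigma)}$. The parallel part is handled by Lemma \ref{lemma:quasi_autofunzione_Allen-Cahn}(2), which gives $\eps^{2-N}c_\star\mathcal{Q}_\Sigma(\varphi_{i,k})+o(\eps^{2-N})\|\varphi_{i,k}\|^2_{\mathcal H}$.

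The key pointwise bound is
\[
\mathcal{Q}_\Sigma(\varphi)\ge\int_\Sigma|\nabla\varphi|^2-c\int_\Sigma p_0\varphi^2\ge -c\int_\Sigma p_0\varphi^2 .
\]
Combining the three contributions gives
\[
\lambda_{i,k}=\Q_{u_\eps}(\phi_{i,k})\ge -c\,\eps^{2}\,\eps^{-N}\!\int_\Sigma p_0\varphi_{i,k}^2-o(\eps^2)\ge-\bar\mu\eps^2 ,
\]
using \eqref{est-varphi-lu} and \eqref{est-varphi-R-H}, together with Lemma \ref{lemma-improve-decay} to absorb the cross terms. This is the stated bound. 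The same conclusion can be reached directly from \eqref{eq-varphi-R}: test it with $\varphi_{i,k}$, integrate over $\Sigma$, and get $\mu_{i,k}\int p_0\varphi_{i,k}^2\ge -c\int p_0\varphi^2_{i,k}-(\text{small})$. Dividing by $\int p_0\varphi_{i,k}^2$ then forces $\mu_{i,k}$ to be bounded below.

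The main obstacle is the normalization. I need $\eps^{-N}\int_\Sigma p_0\varphi_{i,k}^2$ to stay bounded away from zero, so that the error terms — the $f_\eps$ term of size $\eps^3$, the $a^{ij},b^j$ terms, $\tilde L_\eps\psi^\bot$, and $\tilde{\tt L}_\eps\phi$ — are genuinely of lower order relative to $\eps^2\int p_0\varphi_{i,k}^2$. I plan to get this from $\int p\phi_{i,k}^2=1$: by Lemma \ref{lemma-improve-decay} and Remark \ref{rem-bound-psi-bot}, the far part and $\psi^\bot_{i,k}$ carry only $o(1)$ of the weighted mass, so $\eps^{-N}c_\star\int_\Sigma p_0\varphi^2_{i,k}\to 1$. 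The weight mismatch between $p_0(\eps\y)$ and $p$, and the Jacobian of the Fermi change of variables, need to be controlled here. This uses $|t|\lesssim \eps^{-1}$ on ${\rm supp}\,\chi_2$ and $p_0(\eps\y)\sim(1+|\eps x|)^{-2}$ there, so the weights and the Jacobian differ only by uniformly bounded factors.
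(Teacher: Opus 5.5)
Your proposal is correct and follows essentially the paper's argument: test with $\phi_{i,k}$, dispose of the region away from $\Sigma_\eps$ using $W''(u_\eps)\ge \frac{1}{2}W''(a)$, decompose $\psi_{i,k}=\varphi_{i,k}(\eps\cdot)v'_\star+\psi^\bot_{i,k}$, use the spectral gap for the orthogonal part and Lemma \ref{lemma-improve-decay} for the mixed terms, and bound the parallel part below by $-c\,\eps^{2-N}\int_\Sigma p_0\varphi_{i,k}^2$ via $|A_\Sigma|^2\le c\,p_0$ together with \eqref{est-varphi-lu}. The contradiction framing is never used, and the lower bound on $\eps^{-N}\int_\Sigma p_0\varphi_{i,k}^2$ that you call the main obstacle is not needed for this direct estimate, because with $\int p\,\phi_{i,k}^2=1$ every term is already $O(\eps^2)$ in absolute size; it is only needed for your alternative route through \eqref{eq-varphi-R}, and the paper proves it afterwards in Remark \ref{rem-bound-varphi-R}.
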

\begin{proof}
Testing equation \eqref{tr-prob-BR} with $\phi_{i,k}$ we can see that
\begin{equation}\notag
\lambda_{i,k}\int_{\R^{N+1}}p(x)\phi_{i,k}^2 \,dx=\Q_{u_\eps}(\phi_{i,k}).
\end{equation}
Given a measurable subset $B\subset\R^{N+1}$, we set
$$\Q_{u_\eps,B}(\phi):=\int_{B}(|\nabla \phi|^2+W''(u_\eps)\phi^2)dx,\qquad\forall\,\phi\in H^1(\R^{N+1})$$
Splitting the integrals and using that $W''(u)\ge\frac{W''(a)}{2}>0$ in $\R^{N+1}\setminus{\rm supp}(\chi_1^\natural)$, we can see that
$$\Q_{u_\eps}(\phi_{i,k})=\Q_{u_\eps,{\rm supp(\chi_1^\natural)}}(\phi_{i,k})+\Q_{u_\eps,\R^{N+1}\setminus{\rm supp(\chi_1^\natural)}}(\phi_{i,k})\ge \Q_{u_\eps,{\rm supp(\chi_1^\natural)}}(\phi_{i,k}).$$
Using the change of variables induced by the shifted Fermi coordinates, we can see that $\phi_{i,k}(x)=\psi_{i,k}(\y,t)$ on the support of $\chi_1^\natural$. \\

Using the estimate for $\psi_{i,k}^\bot$ and the exponential decay of $v'_\star$ in $t$, we have
\begin{equation}\notag
\left|\Q_{u_\eps,{\rm supp(\chi_1^\natural)}}(\phi_{i,k})-\int_{\Sigma_\eps\times\R}L\psi_{i,k}\psi_{i,k}\,d\sigma dt\right|\le c\|\psi^\bot_{i,k}\|_{H^1(\Sigma_\eps\times\R)}^2\le c\eps^4
\end{equation}
Using the decomposition $\psi_{i,k}=\psi_{i,k}^\parallel+\psi_{i,k}^\bot$, we can see that
$$\int_{\Sigma_\eps\times\R}L\psi_{i,k}\psi_{i,k}\,d\sigma dt=\int_{\Sigma_\eps\times\R}L\psi_{i,k}^\parallel\psi_{i,k}^\parallel\,d\sigma dt+2\int_{\Sigma_\eps\times\R}L\psi_{i,k}^\parallel\psi_{i,k}^\bot\,d\sigma dt+\int_{\Sigma_\eps\times\R}L\psi_{i,k}^\bot\psi_{i,k}^\bot\,d\sigma dt.$$
Due to the orthogonality condition satisfied by $\psi_{i,k}^\bot$ and the nondegeneracy of $v_\star$ (see Section $4$) we can see that
$$\int_{\R}((\partial_t\psi_{i,k}^\bot)^2+W''(v_\star)(\psi^\bot_{i,k})^2)dt\ge c\int_\R(\psi^\bot_{i,k})^2 dt\ge 0,$$
for some constant $c>0$.\\

Due to the expansion of the Laplacian in shifted Fermi coordinates \eqref{Laplacian_shifted_Fermi}, arguing as in the proof of Lemma \ref{lemma-improve-decay} we can control the mixed term as follows
$$\left|\int_{\Sigma_\eps\times\R}L\psi_{i,k}^\parallel\psi_{i,k}^\bot\,d\sigma dt\right|\le c\eps^{2-N}\|\psi_{i,k}^\bot\|_{L^2(\Sigma_\eps\times\R)}\|p_0^{1/2}\varphi_{i,k}\|_{L^2(\Sigma)}\le c\eps^2\|p^{1/2}\phi_{i,k}\|_{L^2(\R^{N+1})}.$$
Note that integration by parts in $\Sigma$ is justified by the fact that $\phi_{i,k}\in H^1_0(B_{R_k}(0))$, which yields that $\varphi_{i,k}$ and $\psi_{i,k}^\bot$ are compactly supported.\\

Similarly, the term involving $\psi_{i,k}^\parallel$ fulfills
$$\int_{\Sigma_\eps\times\R}L\psi_{i,k}^\parallel\psi_{i,k}^\parallel\,d\sigma dt=-\eps^{2-N} c_\star\int_\Sigma J_\Sigma\varphi_{i,k}\varphi_{i,k} d\sigma+R_\eps,$$
where
$$|R_\eps|\le c\eps^{3-N}\|\varphi_{i,k}\|_{\mathcal{H}(\Sigma)}\le c\eps^{3-N}\|p_0\varphi_{i,k}\|_{L^2(\Sigma)}\le c\eps^3\|p^{1/2}\phi_{i,k}\|_{L^2(\R^{N+1})}^2$$
and $c_\star:=\int_{\R}(v'_\star)^2dt>0$.\\

Using the upper estimate for $\|\varphi_{i,k}\|_{\mathcal{H}(\Sigma)}$ provided in \eqref{est-varphi-R-H} and the decay of $|A_\Sigma|^2$, namely the fact that $$|A_\Sigma|^2\le c(1+|y|)^{-2}\qquad\forall\, y\in\Sigma,$$
we can see that there exists $c>0$ such that $$\left|\int_\Sigma -J_\Sigma\varphi_{i,k}\varphi_{i,k} d\sigma\right|\le c\|\varphi_{i,k}\|^2_{\mathcal{H}(\Sigma)} \le c\|p_0\varphi_{i,k}\|^2_{L^2(\Sigma)}.$$
As a consequence, recollecting all the above information,
\begin{equation}\notag
\begin{aligned}
\Q_{u_\eps}(\phi_{i,k})&\ge
c_\star\eps^{2-N}\int_{\Sigma}-J_\Sigma \varphi_{i,k}\varphi_{i,k} d\sigma dt-c\eps^2\int_{\R^{N+1}}p(x)\phi^2_{i,k}dx\\
&\ge-\eps^{2-N}c_\star\int_\Sigma p_0\varphi_{i,k}^2 d\sigma -c\eps^2\int_{\R^{N+1}}p(x)\phi^2_{i,k}dx\ge-c\eps^2\int_{\R^{N+1}}p(x)\phi^2_{i,k}dx,
\end{aligned}
\end{equation}
for some constant $c>0$.
\end{proof}

\begin{remark}\label{rem-bound-varphi-R}
Since $\|p_0^{1/2} \psi^\bot_{i,k}\|_{L^2(\Sigma_\eps\times\R)}\le c\eps^2$ and $\|p^{1/2}\phi_{i,k}\|_{L^2(\R^{N+1})}=1$, due to Lemma \ref{lemma-varphi-R-bd}, there exists $\sigma>0$ such that
\begin{equation}\label{low-est-varphi}
0<\sigma\le\eps^{-N/2}\|\varphi_{i,k}p_0^{1/2}\|_{L^2(\Sigma)}\le \sigma^{-1},
\end{equation}
for any $1\le i\le n+1$ and $k>0$ large enough. So in particular $\varphi_{i,k}\ne 0$.
\end{remark}
Now we use all the previous information to pass to the limit as $k\to\infty$ and construct a Jacobi field $\varphi\ne 0$ of $\Sigma$ which is is $D_\nu(\Sigma)$ for some $\nu>\bar{\nu}$.
\begin{lemma}\label{lemma-convergence}
There exist $\mu_1,\dots,\mu_{n+1}\in[-\mu,0]$ and $n+1$ solutions $\varphi_{1},\dots,\varphi_{n+1}\in\mathcal{H}(\Sigma)$ to
$$-J_\Sigma\varphi_{i}=\mu_{i}p_0(y)\varphi_{i}\qquad\text{in}\,\Sigma$$
such that $\varphi_{i,k}\to\varphi_i$ weakly in $\mathcal{H}(\Sigma)$ and $\eps^{-2}\lambda_{i,k}\to\mu_i$ as $k\to\infty$.
\end{lemma}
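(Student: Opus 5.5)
The plan is to pass to the limit $k\to\infty$ separately for each fixed $i\in\{1,\dots,n+1\}$, using the uniform bounds already collected; this requires the correct normalization. Remark \ref{rem-bound-varphi-R} shows that $\eps^{-N/2}\|p_0^{1/2}\varphi_{i,k}\|_{L^2(\Sigma)}$ is bounded above and below. I will therefore work with the rescaled functions $\tilde\varphi_{i,k}:=\eps_k^{-N/2}\varphi_{i,k}$, which is how the statement should be read, since the weak limit is supposed to be nonzero. Testing \eqref{eq-varphi-R} with $\tilde\varphi_{i,k}$ and using \eqref{est-varphi-R-H} gives a uniform bound for $\tilde\varphi_{i,k}$ in $\mathcal{H}(\Sigma)$. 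Lemma \ref{lemma-lower-est-ev} gives $\mu_{i,k}:=\eps_k^{-2}\lambda_{i,k}\in[-\bar\mu,0)$. After extracting a diagonal subsequence (finitely many $i$), we may assume $\tilde\varphi_{i,k}\rightharpoonup\varphi_i$ weakly in $\mathcal{H}(\Sigma)$, strongly in $L^2_{loc}(\Sigma)$ by Rellich on compact pieces, and $\mu_{i,k}\to\mu_i\in[-\bar\mu,0]$.

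Next I identify the limit equation by testing \eqref{eq-varphi-R}, divided by $\eps_k^{N/2}$, against a fixed $\zeta\in C^\infty_c(\Sigma)$ and integrating by parts. The left side converges to $\int_\Sigma(\nabla\varphi_i\cdot\nabla\zeta-|A_\Sigma|^2\varphi_i\zeta)$ by weak convergence. The term $\mu_{i,k}p_0\tilde\varphi_{i,k}$ converges to $\mu_ip_0\varphi_i$. The remaining terms should all vanish in the limit:
\begin{itemize}
\item[(a)] The term $\eps^{-2}f_\eps\tilde\varphi_{i,k}$ is $O(\eps)$ by the bound $|f_\eps|\le c\eps^3(1+|y|)^{2(\nu-1)}$.
\item[(b)] The terms with $a^{lj},b^j$ are $O(\eps)$ after one integration by parts onto $\zeta$, using \eqref{est_remainder_laplacian} with $z=\eps(t+h)$ on the compact support of $\zeta$.
\item[(c)] The term $\eps^{-2}\eps^{-N/2}\tilde L_\eps\psi^\bot_{i,k}$ is controlled through Lemma \ref{lemma-eq-varphi-R} by $\eps^{-N/2}\|\psi^\bot_{i,k}\|_{H^1}$ over the preimage of ${\rm supp}\,\zeta$. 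Rescaling $\Sigma_\eps\to\Sigma$ costs $\eps^{N/2}$ in $L^2$. Combined with Lemma \ref{lemma-improve-decay}, this should give $O(\eps^{2})$.
\item[(d)] The term with $\tilde{\tt L}_\eps\phi_{i,k}$ is supported where $\nabla\chi_2^\natural\neq0$, at distance $\sim\delta/\eps$ from $\Sigma_\eps$. Using Remark \ref{rem-bound-psi-bot}(2) together with the exponential decay of $v'_\star$ there, it is $O(e^{-\gamma/\eps})$.
\end{itemize}
Hence $\varphi_i$ is a weak solution of $-J_\Sigma\varphi_i=\mu_ip_0\varphi_i$, and elliptic regularity makes it classical.

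I expect step (c) to be the main obstacle. The powers of $\eps$ coming from the $\eps^{-2}$ in \eqref{eq-varphi-R}, the $\eps^{-N/2}$ normalization and the change of variables $\Sigma_\eps\to\Sigma$ must net to a positive power. If the $H^1$ bound of Lemma \ref{lemma-improve-decay} is only global and not sharp enough, I would first try to combine it with the pointwise exponential bound of Remark \ref{rem-bound-psi-bot}(1) on the compact set ${\rm supp}\,\zeta$. If that also fails, I would rederive a local energy estimate for $\psi^\bot_{i,k}$ by testing \eqref{eq-parallel-bot} against $\zeta^2\psi^\bot_{i,k}$ and using the coercivity from Section 4.

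Finally, I need $\varphi_i\neq0$, since without it the lemma is useless for the contradiction. Weak convergence in $\mathcal{H}(\Sigma)$ does not by itself preserve the lower bound \eqref{low-est-varphi}, because mass could escape to infinity. To prevent this I would show tightness: $\int_{\Sigma\setminus B_\rho}p_0\tilde\varphi_{i,k}^2\to0$ as $\rho\to\infty$, uniformly in $k$. For this I would use a Hardy-type inequality on the conical end ($N\ge3$), $\int_{\Sigma_\rho}p_0 f^2\le C\int_{\Sigma_\rho}|\nabla f|^2$. That inequality together with \eqref{eq-varphi-R} tested against $(1-\zeta_\rho)\tilde\varphi_{i,k}$ should yield a uniform decay in $\rho$, giving strong convergence of $p_0^{1/2}\tilde\varphi_{i,k}$ in $L^2(\Sigma)$ and hence $\|p_0^{1/2}\varphi_i\|_{L^2}\ge\sigma$. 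The same argument passes the orthogonality \eqref{ort-phi-R-ij} to the limit, up to $O(\eps^2)$ errors from $\psi^\bot$, so the $\varphi_i$ are linearly independent.
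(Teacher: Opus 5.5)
Your argument for the lemma as stated is correct and is essentially the paper's proof. You use the uniform bounds from \eqref{est-varphi-R-H} and Lemma \ref{lemma-lower-est-ev}, extract a weakly convergent subsequence, and show that the $f_\eps$, $a^{lj},b^j$, $\tilde L_\eps\psi^\bot_{i,k}$ and $\tilde{\tt L}_\eps\phi_{i,k}$ terms vanish in the weak form of \eqref{eq-varphi-R}. Your $\eps^{-N/2}$ rescaling is the right reading of the statement, since by \eqref{low-est-varphi} and \eqref{est-varphi-R-H} the unscaled $\varphi_{i,k}$ tend to $0$ in $\mathcal{H}(\Sigma)$. Your power count $\eps^{-N/2}\cdot\eps^{N/2}\cdot\eps^{2}$ in step (c) is also right.

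The nontriviality paragraph goes beyond this lemma; the paper proves $\varphi_i\neq0$ separately, in Lemmas \ref{rem-compact}--\ref{lemma-varphi-eps-ne0}. As sketched, it would not work in general. A plain Hardy inequality cannot absorb $\int_\Sigma|A_\Sigma|^2f^2$, which on the ends is of the same order as $\int_\Sigma p_0f^2$, because $|A_\Sigma|^2|y|^2\to|A_\Gamma|^2$. In the case that matters, $\mu_i=0$ (Lemma \ref{lemma-mu=0}), you would need $\mathcal{Q}_\Sigma(f)\ge c\int_\Sigma p_0f^2$ for all $f$ supported far out, which amounts to strict stability $\Lambda_0>0$ of $C$. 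Finite Morse index only yields $\Lambda_0\ge0$. When $\Lambda_0=0$ the Jacobi form has no spectral gap on the ends, so energy bounds alone cannot rule out loss of mass at infinity. The paper avoids this with the weighted estimate of Lemma \ref{rem-compact}: it uses a growing weight $\Gamma_{-\delta}$, $\delta\in(2\nu,0]$ chosen off the indicial roots, together with the extra decay \eqref{improve-decay-delta} of the remainders coming from $\nu<0$.
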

\begin{proof}
From Lemma \ref{lemma-lower-est-ev}, we have $\eps^{-2}\lambda_{i,k}\in[-\bar{\mu},0]$, hence there exists a subsequence such that $\eps^{-2}\lambda_{i,k}\to\mu_i$, for some $\mu_i\in[-\bar{\mu},0]$. Moreover, due to \eqref{est-varphi-R-H}, $\varphi_{i,k}$ is bounded in $\mathcal{H}(\Sigma)$, therefore, up to a subsequence, $\varphi_{i,k}$ converges weakly to some $\varphi_i\in\mathcal{H}(\Sigma)$.\\

It remains to deal with the smaller terms, due to the smallness of $\psi_{i,k}^\bot$ given by Lemma \ref{lemma-improve-decay} and the estimate of $\tilde{L}_\eps\psi^\bot_{i,k}$ given by Lemma \ref{lemma-eq-varphi-R}, we can see that
$$\|\eps^{-2}p_0^{-1/2}\tilde{L}_\eps\psi_{i,k}^\bot\|_{L^2(\Sigma)}\le c\|\psi_{i,k}^\bot\|_{H^1(\Sigma_\eps\times\R)}\le c\eps^2\to 0,\qquad k\to\infty.$$
Similarly, using the smallness of $\phi_{i,k}$ far from $\Sigma_\eps$, given by \eqref{est-phi-far}, we can see that
$$\|\eps^{-2}p_0^{-1/2}\tilde{\tt L}_\eps\phi_{i,k}\|\le ce^{-\tilde{\gamma}/\eps}\to 0,\qquad k\to\infty,$$
for some $\tilde{\gamma}\in(0,\gamma)$. Moreover, using the decay of $f_\eps$ given by \eqref{est-f-eps}, we have
$$\|\eps^{-2}p_0^{-1/2}f_\eps\varphi_{i,k}\|_{L^2(\Sigma)}\le c\eps^{1-N}\|p_0^{1/2}\varphi_{i,k}\|_{L^2(\Sigma)}\le c\eps\to 0,\qquad k\to\infty.$$
Using the decay of the coefficients $a^{lj}$ and $b^j$ in $\eps$ and in $\y$, we finally conclude that $\varphi_i$ is a weak solution to the equation $$-J_\Sigma\varphi_i=\mu_i p_0(y)\varphi_i\qquad\text{in}\,\Sigma.$$
\end{proof}
Now we exhibit a careful analysis to show that the weak limits $\varphi_{i}$ of $\varphi_{i,k}$ are nontrivial, for any $1\le i\le n+1$. For $\mu\in\R$, we introduce the operator $$J_\Sigma^\mu:=J_\Sigma+\mu p_0$$
and we define its indicial roots as
$$-\frac{N-2}{2}\pm{\rm Re}(\Lambda_j(\mu)),$$
where we have set
$$\Lambda_j(\mu):=
\begin{cases}
\sqrt{\left(\frac{N-2}{2}\right)^2-\mu+\lambda_j}\qquad\text{if}\,\left(\frac{N-2}{2}\right)^2-\mu+\lambda_j\ge 0\\
i\sqrt{-\left(\left(\frac{N-2}{2}\right)^2-\mu+\lambda_j\right)}\qquad\text{otherwise}.
\end{cases}$$
Note that $\Lambda_j(0)=\Lambda_j$ and $\Lambda_j(\mu)^2\in\R$ may be negative, since $\Lambda_j(\mu)$ are not necessarily real numbers.\\

In these notations, taking $\delta\in(2\nu, 0]$ and $\mu:=\eps^{-2}\lambda_{i,k}$, we have $p_0^{-1/2} J_\Sigma^\mu\varphi_{R}\Gamma_{-\delta}\in L^2(\Sigma)$
and 
\begin{equation}\label{improve-decay-delta}
\|p_0^{-1/2}J_\Sigma^\mu\varphi_{i,k}\Gamma_{-\delta}\|_{L^2(\Sigma)}\le c\|\psi_{i,k}^\bot\|^2_{H^1(\Sigma_\eps\times\R)}+c\eps^{1-N}\|\varphi_{i,k}\|_{\mathcal{H}(\Sigma)}+ce^{-\tilde{\gamma}/\eps}
\to 0\qquad\text{as}\,\eps\to 0.
\end{equation} 
Roughly speaking, the decay estimates about the remainders done in the proof of Lemma \ref{lemma-convergence} can be improved, in the sense that we can gain some arbitrarily small extra decay, thanks to the fact that $\nu<0$.

\begin{lemma}\label{rem-compact}
Let $\mu\in\R$ and $J_\Sigma^\mu:=J_\Sigma+\mu p_0(y)$. Let $\delta\in\R\setminus\{\pm{\rm Re}(\Lambda_j(\mu))\}_{j\ge 0}$. Then there exist $R_0,\,c>0$ such that   $$\|p_0^{1/2}\varphi\Gamma_{-\delta}\|_{L^2(\Sigma)}\le c(\|p_0^{-1/2}J_\Sigma^\mu\varphi\Gamma_{-\delta}\|_{L^2(\Sigma)}+\|\varphi\|_{L^2(K_{R_0})}),\qquad\forall\varphi \in H^1_0(K_R),\,\forall\, R>R_0.$$ 
\end{lemma}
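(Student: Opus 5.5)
We argue by contradiction combined with a compactness argument, using the conical structure of $\Sigma$ at infinity to reduce to an ODE/separation-of-variables statement on the cone. Throughout, $\Gamma_{-\delta}$ is read as the weight $(1+|y|)^{-\delta}$ on $\Sigma$.

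Suppose the estimate fails. Then there are $R_k\to\infty$ (or any admissible $R_k>R_0$) and $\varphi_k\in H^1_0(K_{R_k})$ with
\[
\|p_0^{1/2}\varphi_k\Gamma_{-\delta}\|_{L^2(\Sigma)}=1,\qquad \|p_0^{-1/2}J^\mu_\Sigma\varphi_k\Gamma_{-\delta}\|_{L^2(\Sigma)}+\|\varphi_k\|_{L^2(K_{R_0})}\to0 .
\]
The first step is an a priori gradient bound. We test $J^\mu_\Sigma\varphi_k$ against $\varphi_k\Gamma_{-2\delta}$ and integrate by parts; this is legitimate because $\varphi_k$ has compact support. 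Using $|A_\Sigma|^2+|\mu|p_0\le cp_0$ and $|\nabla\Gamma_{-\delta}|\le c\,p_0^{1/2}\Gamma_{-\delta}$, we obtain a uniform bound on $\|\nabla(\varphi_k\Gamma_{-\delta})\|_{L^2}$.

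Passing to a subsequence, $\varphi_k\rightharpoonup\varphi$ weakly in the weighted space. The limit satisfies $J^\mu_\Sigma\varphi=0$ and $\varphi=0$ on $K_{R_0}$. We now split into two cases.

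\emph{Case 1: no mass escapes to infinity.} If $\varphi\neq0$, we obtain a solution of $J^\mu_\Sigma\varphi=0$ that vanishes on the open set $K_{R_0}$. Unique continuation for this second-order elliptic equation then forces $\varphi\equiv0$, a contradiction. This is the reason for keeping the term $\|\varphi\|_{L^2(K_{R_0})}$ on the right-hand side: its vanishing in the limit, together with unique continuation, yields $\varphi\equiv0$. Hence $\varphi\equiv0$, and by local compactness $\varphi_k\to0$ in $L^2_{loc}$, so all of the normalized mass must escape to infinity.

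\emph{Case 2: mass escapes to infinity; this is the main obstacle.} Here we use (H3). On $\Sigma_{R}$ we write $y=r\theta+w\nu_C$ and note that
\[
J^\mu_\Sigma=\partial_r^2+\frac{N-1}{r}\partial_r+r^{-2}\bigl(\Delta_\Gamma+|A_\Gamma|^2+\mu\bigr)+E,
\]
where the error $E$ has coefficients that are $o(r^{-2})$ relative to the conical operator (they come from $\|w(r,\cdot)\|_{C^2}\to0$; note $p_0\sim r^{-2}$). We pass to the variable $s=\log r$ and set $\tilde\varphi=r^{\frac{N-2}{2}}\varphi$. The weighted $L^2$ norms $\|p_0^{1/2}\varphi\Gamma_{-\delta}\|$ and $\|p_0^{-1/2}J\varphi\Gamma_{-\delta}\|$ then become $L^2(ds\,d\theta)$ norms of $e^{-(\delta+\frac{N-2}{2}-\frac{N-2}{2})s}$-type conjugates (up to fixed exponential weights). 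Expanding in the eigenfunctions of $-J_\Gamma$, the conjugated operator decouples into the ODEs $\partial_s^2-\Lambda_j(\mu)^2$, after conjugation by $e^{\delta s}$.

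Because $\delta\neq\pm{\rm Re}(\Lambda_j(\mu))$ for every $j$, each operator $\partial_s^2-2\delta\partial_s+\delta^2-\Lambda_j(\mu)^2$ has no characteristic roots on the imaginary axis. It is therefore invertible on $L^2(\R)$ (or on a half-line with Dirichlet data), with bounds uniform in $j$, since $\Lambda_j(\mu)^2\to\infty$ and only finitely many modes are near-critical. Parseval then gives the exact estimate $\|\tilde\varphi\|\le c\|\text{conjugated }J\tilde\varphi\|$ for functions supported in $\{r>R_1\}$.

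Taking $R_1$ large enough absorbs the $o(1)$ perturbation $E$. We then apply this to $(1-\chi)\varphi_k$, where $\chi$ is a cutoff near $K_{R_1}$; the resulting commutator terms live in a compact set, where $\varphi_k\to0$ strongly by Case 1 and the gradient bound. This gives $\|p_0^{1/2}\varphi_k\Gamma_{-\delta}\|\to0$, contradicting the normalization. We expect the care to lie in making the Fourier-mode inversion uniform in $j$, and in controlling the perturbation $E$ and the non-product metric on $\Sigma_R$. For these we would follow the Lockhart--McOwen/Fredholm theory on conical ends, as in \cite{lockhart1985elliptic,agudelo2025fredholm}.
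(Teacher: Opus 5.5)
\noindent The gap is in your Case 2, at the step where you invoke invertibility of $\partial_s^2-2\delta\partial_s+\delta^2-\Lambda_j(\mu)^2$ ``on $L^2(\R)$ (or on a half-line with Dirichlet data)'' and apply Parseval to $(1-\chi)\varphi_k$.

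That function lives on $\{R_1<|y|<R_k\}$ and is only in $H^1_0$ there. Extended by zero, its normal derivative jumps across $\{|y|=R_k\}$. This jump is not seen by $J^\mu_\Sigma\varphi_k$, which is all the right-hand side of the lemma controls. So neither the whole-line estimate nor Lockhart--McOwen theory applies, since the latter concerns functions in weighted Sobolev spaces on all of $\Sigma$, not Dirichlet truncations.

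What you actually need, mode by mode, is a bound on the truncated interval $(s_1,s_k)=(\log R_1,\log R_k)$ with $u_j(s_k)=0$, uniform in $s_k$. This is not implied by $\delta\neq\pm{\rm Re}(\Lambda_j(\mu))$:
\begin{itemize}
\item it holds if $|\delta|<{\rm Re}(\Lambda_j(\mu))$, by an energy identity;
\item it holds if $\delta>{\rm Re}(\Lambda_j(\mu))$, by solving forward from $s_1$, so that the outer condition is irrelevant;
\item it fails if $\delta<-{\rm Re}(\Lambda_j(\mu))$.
\end{itemize}
For the last case, take real $\Lambda=\Lambda_j(\mu)>0$, a cutoff $\zeta$ vanishing near $s_1$, and $u:=\zeta(s)e^{-\Lambda s_k}\sinh(\Lambda(s_k-s))$. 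Then $u(s_k)=0$, and $-u''+\Lambda^2u$ is a bump near $s_1$ whose weighted norm stays bounded. Yet
\[
\|e^{-\delta s}u\|_{L^2(s_1,s_k)}\gtrsim e^{(|\delta|-\Lambda)s_k}\longrightarrow\infty\qquad(s_k\to\infty).
\]

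The same happens on $\Sigma$ itself. Take $\mu$ so negative that $-J^\mu_\Sigma$ is coercive on $\mathcal{H}(\Sigma)$, fix $0\le f\in C^\infty_c(\Sigma)$, and let $\varphi_R$ be the Dirichlet solutions of $-J^\mu_\Sigma\varphi_R=f$ in $K_R$. These converge to a positive limit behaving like $c_0|y|^{-\frac{N-2}{2}-\Lambda_0(\mu)}$ with $c_0>0$. Hence for $\delta<-{\rm Re}(\Lambda_0(\mu))$ the left-hand side blows up while the right-hand side stays bounded: the inequality as stated is false in that range. No argument based only on non-indiciality can close Case 2; a restriction such as $\delta>-{\rm Re}(\Lambda_0(\mu))$ is needed.

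\medskip

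\noindent The paper handles the truncation directly rather than by compactness. After the same conjugation by $|y|^{\frac{N-2}{2}}$, cutoff, and expansion in eigenfunctions of $-J_\Gamma$, it tests each mode equation on $(t_0,t_R)$ against $u_j$ times the weight, using $u_j(t_0)=u_j(t_R)=0$. With $v=e^{-\delta t}u_j$ this gives
\[
\int (v')^2\,dt+\bigl(\Lambda_j(\mu)^2-\delta^2\bigr)\int v^2\,dt=\int e^{-\delta t}g_j\,v\,dt ,
\]
and hence a bound uniform in $R$ exactly when $\delta^2<\Lambda_j(\mu)^2$. The paper's displayed mode-wise inequality is justified by this identity only in that range. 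That range, $|\delta|<{\rm Re}(\Lambda_0(\mu))$, is the one used later: $\delta\in(2\nu,0]$ small and $\Lambda_0(\mu_i)>0$.

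If you replace your Fourier/Parseval step by this identity, your contradiction scheme goes through. To cover all admissible $\delta>-{\rm Re}(\Lambda_0(\mu))$, add the forward ODE solution for the finitely many modes with ${\rm Re}(\Lambda_j(\mu))<\delta$. One smaller point: the commutator terms contain $\nabla\varphi_k$, which your gradient bound only makes weakly convergent. You need an interior elliptic (Caccioppoli) estimate to make these terms tend to zero in $L^2$.
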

\begin{proof}
This can be proved by arguing in the same way as in Remark $2.1$ of \cite{agudelo2025fredholm}. The only difference are the fact that $\varphi$ is defined in a bounded set and fulfills Dirichlet boundary conditions, which make the integrations by parts possible, and the presence of $\mu$. However, for the sake of completeness, we give a sketch of proof.\\

First we recall that the equation 
\begin{equation}\label{Dirichlet-K_R}
\begin{aligned}
-J_\Sigma^\mu\varphi&=f\qquad\text{in}\,K_R\\
u&=0\qquad\text{on}\,\partial K_R
\end{aligned}
\end{equation}
is equivalent to 
\begin{equation}\notag
\begin{aligned}
-\mathcal{L}^\mu u=g\qquad\text{in}\,\Sigma\\
u=0\qquad\text{on}\,\partial K_R,
\end{aligned}    
\end{equation}
where the operator
$$\mathcal{L}u:=|y|^N{\rm div}(|y|^{2-N}\nabla_\Sigma u)+\left(|y|^2|A_\Sigma|^2+|y|^{\frac{N+2}{2}}\Delta_\Sigma\left(|y|^{-\frac{N-2}{2}}\right)\right)u$$
is defined in \cite{agudelo2024jacobi} and
$$u:=|y|^{\frac{N-2}{2}}\phi,\quad g:=|y|^{\frac{N+2}{2}}f,\quad\mathcal{L}^\mu:=\mathcal{L}+\mu|y|^2|A_\Sigma|^2.$$
If $R_0>1$ is large enough, then $\Sigma_{R_0-1}=\Sigma\setminus K_{R_0-1}$ can be expressed as a normal graph over $C$, namely any point $y\in \Sigma_{R_0-1}$ can be written as
$$y=e^t\theta+w(e^t,\theta)\nu_C(\theta),\qquad\forall\,(\theta,t)\in\Gamma\times(\log(R_0-1),\infty).$$
We recall that, using the change of variables $u(y)=\tilde{u}(\theta,t)$ in $\Sigma_{K_{R_0-1}}$, the operator $\mathcal{L}$ can be expanded as
$$\mathcal{L}u=\partial_t^2\tilde{u}+\Delta_\Gamma\tilde{u}+|A_\Gamma|^2\tilde{u}-\left(\frac{N-2}{2}\right)^2\tilde{u}+\mathcal{R}\tilde{u}:=L\tilde{u}+\mathcal{R}\tilde{u},$$
where the $\mathcal{R}$ is small in the sense of formula $(3.3)$ of \cite{agudelo2024jacobi}.\\

Let $\eta:\R\to[0,1]$ be a smooth cutoff function such that $\eta=1$ in $(\log R_0,\infty)$ and $\eta=0$ in $(-\infty,\log(R_0-1))$. Therefore, if $\varphi$ satisfies \eqref{Dirichlet-K_R}, $u:=|y|^{\frac{N-2}{2}}\phi$ and $g:=|y|^{\frac{N+2}{2}}f$, the corresponding function $\bar{u}(\theta,t):=\eta(t)\tilde{u}(\theta,t)$ satisfies the homogeneous Dirichlet problem
\begin{equation}\notag
\begin{aligned}
-L\bar{u}-\mu\bar{u}&=\bar{g}  \qquad\text{in}\,\mathring{K}_{R}\setminus K_{R_0-1}\\
\bar{u}&=0\qquad\text{on}\,\partial (\mathring{K}_{R}\setminus K_{R_0-1}),
\end{aligned}
\end{equation}
where $\bar{g}:=\eta \tilde{g}+2\partial_t\eta\partial_t\tilde{u}+\partial^2_t\eta \tilde{u}+\mathcal{R}\tilde{u}$ and $\tilde{g}(\theta,t):=g(y)$.\\

Taking an orthonormal basis $\{\varphi_j\}_{j\ge 0}$ of $L^2(\Gamma)$ of eigenvectors of $-J_\Gamma$ corresponding to the eigenvalues $$\lambda_0<\lambda_1\le \dots\le \lambda_j\to\infty,\qquad j\to\infty,$$
we can expand $\bar{u}$ in Fourier series in the form
$$\bar{u}(\theta,t)=\sum_{j=0}^\infty u_j(t)\varphi_j(\theta), \qquad  u_j(t):=\int_\Gamma \bar{u}(\theta,t)\varphi_j(\theta)d\theta.$$
It is possible to see that the Fourier components satisfy the problems
\begin{equation}\label{eq-Fourier}
\begin{aligned}
-L_j u_j&=g_j \qquad\text{in}\,(t_0,t_R)\\
u_j(t_0)=u_j(t_R)&=0,
\end{aligned}
\end{equation}
where $$L_j:=\partial^2_t+\mu-\left(\frac{N-2}{2}\right)^2-\lambda_j=\partial^2_t-\Lambda_j(\mu)^2,\qquad t_0=\log(R_0-1),\,t_R=\log R$$
and $$g_j(t):=\int_\Gamma \bar{g}(\theta,t)\varphi_j(\theta)d\theta$$
are the Fourier components of the right-hand side.\\

Taking $\delta\in\R$ and testing equation \eqref{eq-Fourier} with $u_j e^{2\delta t}$ we get the upper bound
$$\left(\Lambda_j(\mu)^2-\delta^2\right)^2\int_\R u_j^2 e^{2\delta t}dt\le \int_\R g_j^2 e^{2\delta t}dt.$$
For the computations we refer to the proof of Proposition $6.2.1$ of \cite{pacard2008connected}. The only differences are that our functions $u_j$ are defined on a bounded set and they vanish on the boundary.\\

As a consequence, if $\delta\notin\{\pm{\rm Re}(\Lambda_j(\mu))\}$, taking the sum of the Fourier series we get the upper bound
$$\int_{K_R\setminus\Sigma_{R_0-1}}u^2 \Gamma_{2\delta}d\sigma\le c\int_\Sigma g^2\Gamma_{2\delta}d\sigma+c_\delta\int_{K_{R_0}}u^2d\sigma,$$
which concludes the proof.
\end{proof}
\begin{lemma}\label{lemma-phi-eps-ine0}
In the above notations, there exist $k_0>0$ such that, for any $k>k_0$, $\|\varphi_{i,k}\|_{L^2(K_{R_0})}\ge\sigma/2>0$.
\end{lemma}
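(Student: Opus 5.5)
We argue by contradiction and combine the lower bound \eqref{low-est-varphi} of Remark \ref{rem-bound-varphi-R} with the weighted estimate of Lemma \ref{rem-compact}. Suppose that, along a subsequence, $\|\varphi_{i,k}\|_{L^2(K_{R_0})}<\sigma/2$. After the normalization implicit in \eqref{low-est-varphi} (we work with $\eps^{-N/2}\varphi_{i,k}$, or equivalently assume the $\varphi_{i,k}$ are normalized so that $\sigma\le\|p_0^{1/2}\varphi_{i,k}\|_{L^2(\Sigma)}\le\sigma^{-1}$), the $\mathcal{H}(\Sigma)$ mass of $\varphi_{i,k}$ is bounded below. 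The aim is to show that this mass cannot sit near infinity, i.e. on $\Sigma_{R_0}$, and hence a definite portion of it lies in $K_{R_0}$.

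First, since $\phi_{i,k}\in H^1_0(B_{R_k}(0))$, each $\varphi_{i,k}$ is compactly supported in some $K_R$ with $R>R_0$. Lemma \ref{rem-compact} therefore applies with $\mu=\mu_{i,k}:=\eps^{-2}\lambda_{i,k}$ and with a weight exponent $\delta$. Here $\mu_{i,k}\in[-\bar\mu,0]$ by Lemma \ref{lemma-lower-est-ev}, and $\mu_{i,k}\to\mu_i$. We need the constant $c$ in Lemma \ref{rem-compact} to be uniform in $k$, so we choose $\delta$ away from all $\pm{\rm Re}(\Lambda_j(\mu))$ for $\mu$ in a small neighborhood of $\mu_i$. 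This is possible because the indicial roots depend continuously on $\mu$ and form a discrete set that is locally finite near any fixed value. Inspecting the Fourier proof, the constant depends only on $\min_j|\Lambda_j(\mu)^2-\delta^2|$ and on the remainder $\mathcal R$, so it is indeed uniform.

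We take $\delta\in(2\nu,0]$ close to $0$, with $\Gamma_{-\delta}$ a bounded weight of size comparable to $1$. This gives
$$\|p_0^{1/2}\varphi_{i,k}\Gamma_{-\delta}\|_{L^2(\Sigma)}\le c\big(\|p_0^{-1/2}J^{\mu_{i,k}}_\Sigma\varphi_{i,k}\Gamma_{-\delta}\|_{L^2(\Sigma)}+\|\varphi_{i,k}\|_{L^2(K_{R_0})}\big).$$
By \eqref{improve-decay-delta}, the first term on the right tends to $0$ as $k\to\infty$. This uses the equation of Lemma \ref{lemma-eq-varphi-R} together with Lemma \ref{lemma-improve-decay}, Remark \ref{rem-bound-psi-bot} and \eqref{est-f-eps}. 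Since $\Gamma_{-\delta}$ is bounded below, the left-hand side dominates $c'\|p_0^{1/2}\varphi_{i,k}\|_{L^2(\Sigma)}\ge c'\sigma$. Consequently $\liminf_k\|\varphi_{i,k}\|_{L^2(K_{R_0})}\ge c'\sigma/c$, and after possibly redefining $\sigma$ (or enlarging $R_0$) this gives the bound $\sigma/2$. This contradicts the assumption, and the claim holds for all $k$ larger than some $k_0$.

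The main obstacle is the uniformity of the constant in Lemma \ref{rem-compact}, both in $R$ (guaranteed by the statement) and in $\mu_{i,k}$. A related issue is that \eqref{improve-decay-delta} has to hold with the same $\delta$ for which no indicial root is crossed. I would handle this by fixing $\delta$ only after extracting the limits $\mu_i$, using discreteness of $\{{\rm Re}\,\Lambda_j(\mu_i)\}$ within the bounded interval $(2\nu,0]$. A secondary issue is reconciling the $\eps^{-N/2}$ scaling in \eqref{low-est-varphi} with the unscaled bound \eqref{est-varphi-R-H}. I would renormalize $\varphi_{i,k}$ consistently by $\eps^{-N/2}$ throughout; this is harmless, since every estimate involved is linear in $\varphi_{i,k}$.
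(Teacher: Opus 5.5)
Your proof is correct, but you handle the dependence on $\mu$ differently from the paper.

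\textbf{Your route.} You apply Lemma~\ref{rem-compact} to the varying operators $J_\Sigma^{\mu_{i,k}}$. This forces you to reopen its Fourier proof and show that the constant (and $R_0$) can be chosen uniformly for $\mu$ near $\mu_i$. That is true, because $\Lambda_j(\mu)^2$ is affine in $\mu$ and only finitely many $j$ matter. However, you should phrase the uniformity in terms of the distance of $\delta$ from the indicial roots, not via $\min_j|\Lambda_j(\mu)^2-\delta^2|$. The one-dimensional bound $(\Lambda_j(\mu)^2-\delta^2)^2\int u_j^2e^{2\delta t}\le\int g_j^2e^{2\delta t}$ quoted there is wrong when $\Lambda_j(\mu)^2<0$. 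For $\delta=0$ it fails outright: $\sin$-type solutions of $u''+\omega^2u=0$ on long Dirichlet intervals are counterexamples.

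\textbf{The paper's route.} The paper sidesteps all of this. It fixes $\delta\in(2\nu,0]$ away from the indicial roots of the \emph{limit} operator $J_\Sigma^{\mu_i}$ and applies Lemma~\ref{rem-compact} once, with a fixed constant. It then writes $J_\Sigma^{\mu_i}\varphi_{i,k}=J_\Sigma^{\mu_{i,k}}\varphi_{i,k}+(\mu_i-\mu_{i,k})p_0\varphi_{i,k}$. The weighted norm of the last term is exactly $|\mu_i-\mu_{i,k}|\,\|p_0^{1/2}\varphi_{i,k}\Gamma_{-\delta}\|_{L^2(\Sigma)}$, i.e.\ $o_k(1)$ times the left-hand side, so it is absorbed.

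\textbf{Comparison.} The paper uses the lemma as a black box, which is more economical. Yours proves a locally uniform-in-$\mu$ version of the lemma, which is more than is needed here.

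\textbf{Remaining points.} You note that one only gets $\liminf_k\|\varphi_{i,k}\|_{L^2(K_{R_0})}\ge\sigma/c$ rather than literally $\sigma/2$. You also note that the $\eps^{-N/2}$ normalization must be used consistently. Both remarks apply equally to the paper's argument and are harmless for Lemma~\ref{lemma-varphi-eps-ne0}. Two minor slips: for $\delta<0$ the weight $\Gamma_{-\delta}$ is unbounded, only bounded below, which is all you actually use. And the contradiction framing is superfluous, since your estimate is direct.
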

\begin{proof}
Due to Lemma \ref{lemma-varphi-R-bd}, there exists $\sigma>0$ such that $\|p_0^{1/2}\varphi_{i,k}\|_{L^2(\Sigma)}\ge \sigma$. As we observed in \eqref{improve-decay-delta}, if we take $\delta\le 0$ wit $|\delta|$ small enough (more precisely $\delta\in(2\nu,0]$), we have
$$\|p_0^{-1/2}J^{\mu_i}_\Sigma\varphi_{i,k}\Gamma_{-\delta}\|_{L^2(\Sigma)}\to 0,\qquad k\to \infty,$$
where we have set $\mu_{i,k}:=\eps^{-2}\lambda_{i,k}$.\\

Therefore, due to Lemma \ref{rem-compact}, if  $\delta\in(2\nu,0]\setminus\{\pm{\rm Re}(\Lambda_j(\mu_i))\}_{j\ge 0}$, where $\mu_i$ is defined in Lemma \ref{lemma-convergence}, we have
\begin{equation}\notag
\begin{aligned}
&0<\sigma\le \|p_0^{1/2}\varphi_{i,k}\|_{L^2(\Sigma)}\le \|p_0^{1/2}\varphi_{i,k}\Gamma_{-\delta}\|_{L^2(\Sigma)}\le c(\|p_0^{-1/2}J_\Sigma^{\mu_i}\varphi_{i,k}\Gamma_{-\delta}\|_{L^2(\Sigma)}+\|\varphi_{i,k}\|_{L^2(\Sigma_{R_0})})\\
&\le c(\|p_0^{-1/2}J_\Sigma^{\mu_{i,k}}\varphi_{i,k}\Gamma_{-\delta}\|_{L^2(\Sigma)}+|\mu_{i,k}-\mu_i|\|p_0^{1/2}\varphi_{i,k}\Gamma_{-\delta}\|_{L^2(\Sigma)}+\|\varphi_{i,k}\|_{L^2(\Sigma_{R_0})})\\
&=c(\|p_0^{-1/2}J_\Sigma^{\mu_{i,k}}\varphi_{i,k}\Gamma_{-\delta}\|_{L^2(\Sigma)}+\|\varphi_{i,k}\|_{L^2(\Sigma_{R_0})}+o_k(1)\|p_0^{1/2}\varphi_{i,k}\Gamma_{-\delta}\|_{L^2(\Sigma)}).
\end{aligned}   
\end{equation}
As a consequence, we can see that $\|\varphi_{i,k}\|_{L^2(K_{R_0})}\ge \sigma/2>0$ for any $1\le i\le n+1$ and $k$ large enough. 
\end{proof} 
\begin{lemma}\label{lemma-varphi-eps-ne0}
In the above notations, $\varphi_i\ne 0$, for any $1\le i\le n+1$.
\end{lemma}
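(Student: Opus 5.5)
The plan is to upgrade the weak convergence $\varphi_{i,k}\rightharpoonup\varphi_i$ in $\mathcal{H}(\Sigma)$, given by Lemma \ref{lemma-convergence}, to strong convergence in $L^2(K_{R_0})$. Then the uniform lower bound of Lemma \ref{lemma-phi-eps-ine0} passes to the limit.

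\emph{Step 1: local compactness.} The set $K_{R_0}$ is a compact smooth manifold, possibly with boundary, inside $\Sigma$. We take a slightly larger compact neighbourhood $K'\supset K_{R_0}$ in $\Sigma$. By \eqref{est-varphi-R-H} the sequence $\{\varphi_{i,k}\}_k$ is bounded in $\mathcal{H}(\Sigma)$. Since $p_0$ is bounded below by a positive constant on $K'$, it follows that $\{\varphi_{i,k}\}_k$ is bounded in $H^1(K')$. The Rellich--Kondrachov theorem on the compact set $K'$ gives a subsequence converging strongly in $L^2(K_{R_0})$. The limit must coincide with the weak limit $\varphi_i$, since weak convergence in $\mathcal{H}(\Sigma)$ implies weak convergence in $L^2(K_{R_0})$. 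This step is routine.

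\emph{Step 2: passing the lower bound to the limit.} By Lemma \ref{lemma-phi-eps-ine0} we have $\|\varphi_{i,k}\|_{L^2(K_{R_0})}\ge\sigma/2$ for all $k>k_0$. Strong convergence then gives $\|\varphi_i\|_{L^2(K_{R_0})}\ge \sigma/2>0$, so $\varphi_i\ne0$.

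\emph{Main obstacle: the normalisation.} One must check that the normalisation of $\varphi_{i,k}$ used in Lemma \ref{lemma-phi-eps-ine0} is the same one used in Lemma \ref{lemma-convergence}. Remark \ref{rem-bound-varphi-R} bounds $\eps^{-N/2}\|p_0^{1/2}\varphi_{i,k}\|_{L^2(\Sigma)}$ above and below. By contrast, \eqref{est-varphi-R-H} and Lemma \ref{lemma-phi-eps-ine0} are written without the factor $\eps^{-N/2}$. I would therefore consistently replace $\varphi_{i,k}$ by $\eps_k^{-N/2}\varphi_{i,k}$ throughout.

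This rescaling is harmless for three reasons. First, equation \eqref{eq-varphi-R} is linear in $(\varphi_{i,k},\psi^\bot_{i,k},\phi_{i,k})$. Second, the remainder estimates in \eqref{improve-decay-delta} scale in the same way as the main term. Third, the a priori estimate of Lemma \ref{rem-compact} is homogeneous of degree one.

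With this rescaling:
\begin{itemize}
\item the rescaled sequence is bounded in $\mathcal{H}(\Sigma)$;
\item it has $p_0$-weighted $L^2$ norm in $[\sigma,\sigma^{-1}]$;
\item Lemma \ref{rem-compact}, combined with \eqref{improve-decay-delta}, yields the local lower bound $\sigma/2$ on $K_{R_0}$.
\end{itemize}
Steps 1 and 2 then apply verbatim to the rescaled functions, and their weak limits are the $\varphi_i$ of Lemma \ref{lemma-convergence}, which are therefore nonzero.

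Finally, linear independence of $\varphi_1,\dots,\varphi_{n+1}$, which is needed afterwards, would follow in the same way. The $p$-orthonormality \eqref{ort-phi-R-ij} transfers, up to $o(1)$ errors coming from $\psi^\bot_{i,k}$ and the far region, to near-orthonormality of the rescaled $\varphi_{i,k}$ in $L^2(\Sigma,p_0\,d\sigma)$. For the weighted tail we use Lemma \ref{rem-compact} with $\delta<0$, which prevents mass from escaping to infinity. Strong convergence in $L^2(p_0)$ then holds and gives linear independence of the limits; however, the present statement only needs $\varphi_i\ne0$.
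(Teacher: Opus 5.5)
Your proposal is correct and follows the paper's argument. Like the paper, you combine strong $L^2_{loc}$ convergence of $\varphi_{i,k}$ to $\varphi_i$ with the uniform lower bound of Lemma \ref{lemma-phi-eps-ine0}; the paper simply asserts the local convergence, whereas you justify it via Rellich--Kondrachov on a compact neighbourhood of $K_{R_0}$. Your normalisation remark is also well taken: Remark \ref{rem-bound-varphi-R} shows that \eqref{est-varphi-R-H} and Lemma \ref{lemma-phi-eps-ine0} only hold for the rescaled functions $\eps_k^{-N/2}\varphi_{i,k}$, and without that rescaling $\|p_0^{1/2}\varphi_{i,k}\|_{L^2(\Sigma)}=O(\eps_k^{N/2})$, so the weak limits would vanish.
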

\begin{proof}
It follows from the fact that $\varphi_{i,k}\to\varphi_{i}$ in strongly $L^2_{loc}(\Sigma)$ and Lemma \ref{lemma-phi-eps-ine0} holds, so that $\|\varphi_i\|_{L^2(K_{R_0})}\ge \sigma/2$.
\end{proof}

Using the orthogonality condition \eqref{ort-phi-R-ij} and passing to the limit as $k\to\infty$, it is possible to see that
\begin{equation}\label{ort-negative-directions}
\int_\Sigma p_0(y)\varphi_i\varphi_j d\sigma=0,\qquad\forall\,1\le i,j\le n+1,\,i\ne j,    
\end{equation}
so in particular $\{\varphi_i\}_{1\le i\le n+1}$ are linearly independent. Without loss of generality, we can assume that $\mu_i\le \mu_{i+1}\le 0$, for any $1\le i\le n$.
\begin{lemma}\label{lemma-mu=0}
In the previous notations, we have $\mu_{n+1}=0$.    
\end{lemma}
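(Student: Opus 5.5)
The plan is to argue by contradiction. Suppose $\mu_{n+1}<0$. Since the $\mu_i$ are ordered, $\mu_1\le\dots\le\mu_{n+1}<0$. I would then use $\varphi_1,\dots,\varphi_{n+1}$ to build an $(n+1)$-dimensional subspace of $C^\infty_c(\Sigma)$ on which $\mathcal{Q}_\Sigma$ is negative definite. This contradicts $n={\rm Morse}(\Sigma)$.

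\textbf{Step 1 (extend $\mathcal{Q}_\Sigma$ to $\mathcal{H}(\Sigma)$).} Because $|A_\Sigma|^2\le c\,p_0$, the bilinear form
$$\mathcal{Q}_\Sigma(\varphi,\psi)=\int_\Sigma\big(\nabla_\Sigma\varphi\cdot\nabla_\Sigma\psi-|A_\Sigma|^2\varphi\psi\big)d\sigma$$
is continuous on $\mathcal{H}(\Sigma)$. I would then show that $C^\infty_c(\Sigma)$ is dense in $\mathcal{H}(\Sigma)$. Take a cutoff $\eta_R(y)=\eta(|y|/R)$ with $\eta=1$ on $[0,1]$ and $\eta=0$ on $[2,\infty)$. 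Then
$$\int_\Sigma|\nabla\eta_R|^2\varphi^2\,d\sigma\le cR^{-2}\int_{R<|y|<2R}\varphi^2\,d\sigma\le c\int_{|y|>R}p_0\varphi^2\,d\sigma\to0 .$$
So $\eta_R\varphi\to\varphi$ in $\mathcal{H}(\Sigma)$, and a standard mollification finishes the density argument.

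\textbf{Step 2 (compute $\mathcal{Q}_\Sigma$ on the span).} By Lemma \ref{lemma-convergence}, each $\varphi_i\in\mathcal{H}(\Sigma)$ is a weak solution of $-J_\Sigma\varphi_i=\mu_ip_0\varphi_i$. This means $\mathcal{Q}_\Sigma(\varphi_i,\zeta)=\mu_i\int_\Sigma p_0\varphi_i\zeta\,d\sigma$ for all $\zeta\in C^\infty_c(\Sigma)$. Both sides are continuous in $\zeta$ for the $\mathcal{H}(\Sigma)$-topology, so by Step 1 the identity holds for all $\zeta\in\mathcal{H}(\Sigma)$, in particular for $\zeta=\varphi_j$. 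By the orthogonality relations \eqref{ort-negative-directions}, for $\varphi=\sum_ic_i\varphi_i\neq0$ we get
$$\mathcal{Q}_\Sigma(\varphi)=\sum_{i}\mu_ic_i^2\int_\Sigma p_0\varphi_i^2\,d\sigma\le\mu_{n+1}\int_\Sigma p_0\varphi^2\,d\sigma<0 .$$
This uses that $\varphi_i\ne0$, which is Lemma \ref{lemma-varphi-eps-ne0}. Hence $V={\rm span}\{\varphi_i\}$ is $(n+1)$-dimensional and $\mathcal{Q}_\Sigma\le\mu_{n+1}\|p_0^{1/2}\cdot\|^2_{L^2}$ on $V$.

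\textbf{Step 3 (approximate by compactly supported functions).} Choose $\zeta_i\in C^\infty_c(\Sigma)$ with $\|\zeta_i-\varphi_i\|_{\mathcal{H}(\Sigma)}<\tau$. Consider the linear map $c\mapsto\sum c_i\zeta_i$ on the unit sphere $\{\sum c_i^2\int p_0\varphi_i^2=1\}$. Since $\mathcal{Q}_\Sigma$ and $\|p_0^{1/2}\cdot\|_{L^2}$ are continuous on $\mathcal{H}(\Sigma)$, and the sphere is compact, for $\tau$ small we get $\mathcal{Q}_\Sigma(\sum c_i\zeta_i)\le\mu_{n+1}/2<0$ on this sphere. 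In particular the $\zeta_i$ are linearly independent. This yields an $(n+1)$-dimensional subspace of $C^\infty_c(\Sigma)$ where $\mathcal{Q}_\Sigma<0$, contradicting ${\rm Morse}(\Sigma)=n$. Therefore $\mu_{n+1}=0$.

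The main delicate point is Step 2. One has to test the limiting equation with the non-compactly-supported $\varphi_j$, which is why the density and continuity of Step 1 are needed. One also needs the orthogonality relations to survive the limit $k\to\infty$, which requires $p_0^{1/2}\varphi_{i,k}\to p_0^{1/2}\varphi_i$ strongly in $L^2$, not merely weakly. That strong convergence is what \eqref{ort-negative-directions} asserts, and I would take it as given: the uniform tail control from Lemma \ref{rem-compact} together with local compactness should supply it.
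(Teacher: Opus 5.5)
Your proof is correct and takes the same route as the paper: assume $\mu_{n+1}<0$, then truncate and smooth the limits $\varphi_1,\dots,\varphi_{n+1}$ to get an $(n+1)$-dimensional subspace of $C^\infty_c(\Sigma)$ on which $\mathcal{Q}_\Sigma$ is negative, contradicting ${\rm Morse}(\Sigma)=n$. Your version is more careful than the paper's: testing the weak equation against $\varphi_j$ (justified by your density argument) gives $\mathcal{Q}_\Sigma(\varphi_i,\varphi_j)=\mu_i\int_\Sigma p_0\varphi_i\varphi_j\,d\sigma=0$ for $i\neq j$, and your compactness argument on the unit sphere then controls the cross terms, which the paper's Gram--Schmidt step (ensuring only that each approximant has negative Rayleigh quotient) leaves implicit.
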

\begin{proof}
Assume by contradiction that $$\mu_{n+1}=\mathcal{R}(\varphi_{n+1}):=\frac{\mathcal{Q}_\Sigma(\varphi_{n+1})}{\|p_0^{1/2}\varphi_{n+1}\|_{L^2(\Sigma)}}<0.$$ 
Let $\chi:\R\to[0,1]$ be a smooth function such that $\chi=1$ in $(-\infty,1)$ and $\chi=0$ in $(2,\infty)$ and let $\chi_R(y):=\chi(|y|/R)$. Arguing as in Lemma $3.4$ of \cite{agudelo2024jacobi}, we can see that $\varphi_i\in C^{2}(\Sigma)$ are classical solutions to $J_\Sigma^{\mu_i}\varphi_i=0$. Then $\tilde{\varphi}_{i,R}=\chi_R\varphi_i$ is in $C^2_c(\Sigma)$ and converges to $\varphi_i$ strongly in $\mathcal{H}(\Sigma)$ as $R\to\infty$. As a consequence, for any $\sigma>0$, there exists $R>0$ such that $$\mathcal{R}(\tilde{\varphi}_{i,R})<\mu_i+\sigma/3\le \mu_{n+1}+\sigma/3<0\qquad\forall\, 1\le i\le n+1$$
if $\sigma$ is so small that $\mu_{n+1}+\sigma<0$.\\

Using the density of $C^\infty_c(\Sigma)$ in $C^2_c(\Sigma)$, we can find $\hat{\varphi}_1,\dots,\hat{\varphi}_{n+1}\in C^\infty_c(\Sigma)$ such that
$$\|\tilde{\varphi}_{i,R}-\hat{\varphi}_i\|_{\mathcal{H}(\Sigma)}<\sigma/3,\qquad\mathcal{R}(\hat{\varphi}_i)<\mu_i+2\sigma/3\le \mu_{n+1}+2\sigma/3<0
,\qquad\forall\,1\le i\le n+1.$$
Finally, using the orthogonality condition \eqref{ort-negative-directions} and the fact that $\|\varphi_{i}-\hat{\varphi}_i\|_{\mathcal{H}(\Sigma)}<\sigma/3$, with the Gram-Schmidt algorithm we can construct $n+1$ functions $\bar{\varphi}_1,\dots,\bar{\varphi}_{n+1}\in C^\infty_c(\Sigma)$ fulfilling the orthogonality condition \eqref{ort-negative-directions} such that $$\mathcal{R}(\bar{\varphi}_i)<\mu_i+\sigma\le \mu_{n+1}+\sigma<0,\qquad\forall\, 1\le i\le n+1.$$
Hence we have found a subspace of $C^\infty_c(\Sigma)$ of dimension $n+1$ of negative directions of $\mathcal{Q}_\Sigma$. This contradicts the fact that $Morse(\Sigma)=n$.
\end{proof}
By Lemma \ref{lemma-mu=0}, $\varphi_{n+1}$ is a Jacobi field of $J_\Sigma$ in $\mathcal{H}(\Sigma)$. Therefore, using Lemma $3.4$ of \cite{agudelo2024jacobi}, we can see that there exists $c>0$ such that
$$|\varphi_{n+1}(y)|\le c(1+|y|)^{-\frac{N-2}{2}},\qquad\forall\,y\in\Sigma,$$
for some constant $c>0$.\\ 

If $\bar{\nu}>-\frac{N-2}{2}$, we have $\varphi_{n+1}\in D_\nu(\Sigma)$ for any $\nu\in(-\frac{N-2}{2},\bar{\nu})$. As a consequence, using that $\Sigma$ is dilation-nondegenerate, we have $D_\nu(\Sigma)=0$, thus $\varphi_{n+1}=0$, which is a contradiction.\\

Note that here the $S$-dilation nondegeneracy is not enough, we actually need dilation-nondegeneracy since $\varphi_{n+1}$ is not necessarily $S$-invariant, even if $u_\eps$ is.\\

If $\bar{\nu}=-\frac{N-2}{2}$, using Lemma $11.1.4$ of \cite{pacard2008connected}, we can see that the exact asymptotic behaviour of $\varphi_{n+1}$ is given by the indicial roots of $J_\Sigma$. In other words, writing
$$y=r\theta+w(r,\theta)\nu_C(\theta),\qquad\forall\,(r,\theta)\in(R_0,\infty)\times\Gamma,$$
we have
$$\varphi_{n+1}(y)=r^{\mu}(a\log r+b)v_j(\theta)(1+o_r(1)),\qquad \text{as}\,r\to\infty,$$
uniformly in $\theta\in\Gamma$, for some $a,\,b\in\R,\,b\ne 0$, and some $\mu\in\{-\frac{N-2}{2}\pm{\rm Re}(\Lambda_j)\}$. Since $\varphi_{n+1}\in\mathcal{H}(\Sigma)$ and $\bar{\nu}=\frac{N-2}{2}$, we necessarily have $$\mu=-\frac{N-2}{2}-{\rm Re}(\Lambda_j)<-\frac{N-2}{2}=\bar{\nu},$$
for some $j\ge 0$. As a consequence, taking $\nu\in(\mu,\bar{\nu})$, we have $\varphi_{n+1}\in D_\nu(\Sigma)$. Since $\Sigma$ is dilation-nondegenerate, we conclude once again that $\varphi_{n+1}=0$, which is a contradiction.

\bibliographystyle{abbrv}
	\bibliography{bibliography}
	\addcontentsline{toc}{chapter}{\protect\numberline{}Bibliography}

\end{document}